\documentclass[12pt, twoside, leqno]{article}

\usepackage{graphicx} 

\usepackage{amsmath,amsthm}
\usepackage{amssymb}

\usepackage{enumitem}

\usepackage{graphicx}

\usepackage[T1]{fontenc}

\newtheorem{theorem}{Theorem}[section]
\newtheorem{corollary}[theorem]{Corollary}
\newtheorem{lemma}[theorem]{Lemma}
\newtheorem{proposition}[theorem]{Proposition}

\theoremstyle{definition}
\newtheorem{definition}[theorem]{Definition}
\newtheorem{remark}[theorem]{Remark}
\newtheorem{example}[theorem]{Example}

\newtheorem*{xrem}{Remark}

\numberwithin{equation}{section}

\begin{document}


\baselineskip=17pt


\title{On a Constraint on Invariant Measures of Certain Cellular Automata}
\author{Matan Tal\\
The Hebrew University of Jerusalem}
\date{}
\maketitle

\vspace{1cm}

\begin{abstract}
    In  \cite{key-1}, a constraint on invariant measures of bi-permutative cellular automata has been observed: fixed values at the positive indices determine almost-surely a uniform conditional probability on the subset of values of positive conditional probability at the zero index. When the alphabet is a finite group and the automaton is multiplication of two neighbors, that set is in fact a coset of some subgroup. In the present paper, we strengthen the formulations in \cite{key-1} and investigate further the implications of this constraint. In the finite group case mentioned above, relations between some attributes of the group structure and the invariant measures are examined. We also inspect a factor, with respect to the shift, that this constraint induces, and analyze the special case in which it has zero measure-theoretical entropy, thus observing an interplay between existence of zero entropy invariant measures on that factor and existence of positive entropy measures corresponding to them on the original system. Then, we leave the setting of bi-permutative cellular automata and generalize our results to a wider class which we named RLP subshifts. The peculiar situation is that although this class may be much larger than the class of bi-permutative cellular automata, we were able to prove only for essentially one other example - the symbolic coding of the times 2 times 3 system on the circle (and its generalizations) -  that it belongs to it.
\end{abstract}

\tableofcontents

\vspace{2cm}

\section{Introduction}

 Let $\Lambda$ be some finite alphabet (i.e. finite set). A cellular automaton is any continuous map $ \Lambda^{\mathbb{N}_0} \rightarrow \Lambda^{\mathbb{N}_0}$ that commutes with the shift.\\
 
 In most of this paper we shall be concerned with bi-permutative cellular automata. \\

\begin{definition}
\label{biper_def}
    Let $\tau : \Lambda^{\mathbb{N}_0} \rightarrow \Lambda^{\mathbb{N}_0}$ be a cellular automaton defined as \\ $\left( \tau\left( x  \right) \right) _i  = r \left( x_i,x_{i+1} \right)$  for some function $r : \Lambda^2 \rightarrow \Lambda$. If for every fixed $a\in \Lambda$ the single variable function $r\left( \cdot,a \right):\Lambda\rightarrow \Lambda$ is bijective then the automaton is said to be \textit{left permutative}. \textit{Right permutativity} is similarly defined with $r\left(a,\cdot\right):\Lambda\rightarrow \Lambda$. If $\tau$ is both right and left permutative then it is said to be \textit{bi-permutative}. 
\end{definition}

\vspace{0.3cm}

Our object of interest is shift-invariant Borel probability measures on $\Lambda^{\mathbb{N}_0}$ that are also invariant relative to some fixed bi-permutative cellular automaton. Apart from atomic measures, it is well known that the uniform measure is such a measure for any surjective cellular automaton (as it is the unique measure of maximal entropy relative to the shift). What other such measures exist is still quite a mystery. Our aim will be to develop further a specific constraint on them that was observed by M. Pivato in \cite{key-1}.\\

Section \ref{pivato_sect}  is dedicated to present the relevant results from  \cite{key-1}, while in Section \ref{basic_theorem_sect} we prove our improvement of the constraint. This is Theorem \ref{biper_th} whose content is as follows: given such a measure, the conditional probability of the value at index zero given the values at the positive indices is almost-surely the uniform probability on some subset of $\Lambda$ (which depends on those values), and moreover, the size of this subset is invariant relative to the cellular automaton.\\

A class of bi-permutative automata to which we shall give special attention is the class of group multiplication cellular automata of finite groups.\\

\begin{definition}
\label{group_def}
Given a finite group $G$ (not necessarily Abelian), we denote by $\tau_{G}:G^{{\mathbb{N}_0}}\rightarrow G^{{\mathbb{N}_0}}$ \textit{the group multiplication
cellular automaton}, i.e. the cellular automaton determined by the rule $\left(\tau_{G}\left(x\right)\right)_{i}=x_{i}\cdot x_{i+1}$.

\end{definition}

\vspace{0.3cm}

For an Abelian group $G$, the cellular automaton $\tau_G$ is known to have many such invariant measures of positive measure-theoretical entropy relative to each of the two maps (cf. \cite{key-4}) but also displays some rigidity for such measures, for example when the joint action of the shift and $\tau_G$ is totally ergodic (cf. \cite{key-7,key-5}). It is yet unknown whether there exist atom-less invariant measures of zero measure-theoretical entropy (relative to each of the two maps).\\

As for non-Abelian groups $G$, it seems that  essentially there are no known examples of atom-less shift-invariant Borel probability measures on $G^{\mathbb{N}_0}$ that are also invariant relative to $\tau_G$ other than the uniform measure on $H^{\mathbb{N}_0}$ - for some $H\leq G$ - and constructions of such measures taking advantage of Abelian subgroups or Abelian quotient groups. Let us also mention that for a group of the form $G_1\times\dots\ \times G_n$, one can take the product of measures of the kinds just mentioned and also atomic ones (the example given in \cite{key-1} is of this kind) - however, ergodicity might be lost on the way.\\

Despite our lack of competence to construct examples for such measures in the case of non-Abelian groups, there is yet no reason to believe that such measures are not as abundant as in the Abelian case.\\

For the bi-permutative cellular automaton  $\tau_G$ the above mentioned constraint  on invariant measures specializes to a constraint that was also observed in \cite{key-1} (its original formulation is presented in Section \ref{pivato_sect}).  Our improved version appears in Section \ref{group_sect} as Theorem \ref{group_th}. This theorem says that the subset of the above mentioned conditional probability is in fact a coset of some subgroup of $G$, and that this subgroup is invariant relative to the cellular automaton. The corollaries then drawn with respect to invariant measures are also related to some attributes of the group $G$.\\

The  constraint on the invariant measures of bi-permutative cellular automata  naturally leads to a definition of a certain factor relative to the shift action. In Sub-Section \ref{pi_subsect} the factor map $\pi_\mu$ is defined. The special case in which this factor possesses zero entropy is discussed in Sections \ref{group_sect} and \ref{zero_ent_section} (the latter is dedicated exclusively to this case).\\

Up to section \ref{zero_ent_section} we deal solely with bi-permutative cellular automata. In Section \ref{sect_RLP} we generalize our results to a larger class which we named RLP subshifts. Although this class may be much larger than the class of bi-permutative cellular automata, we could prove for only one  other type of example that it is included in it. These are the cellular automatons originating in a symbolic coding of the times 2 times 3 multiplication system on the circle $\mathbb{R}/\mathbb{Z}$ (and its generalizations). For this system we anyway have in our disposal Rudolph's Theorem -  which states that the only invariant measure which is ergodic relative to the joint action of the two maps and possesses positive entropy relative to each of them is the uniform measure - and hence our inquiry does not yield an extra insight.\\

The following question is in our eyes perhaps the most unsettling basic open problem regarding invariant measures of cellular automata. We fell short of being able to answer it.\\

\underline{Open Problem:} Are there cellular automata, other than the ones originating from symbolic codings of the times 2 times 3 map on the circle and its generalizations, for which the uniform measure is the only shift-invariant measure that is also invariant relative to the automaton ergodic relative to the joint action of the shift and the automaton that possesses positive entropy relative to each of the two maps? Maybe some requirement on RLP subshifts can guarantee this property?\\

\underline{Acknowledgements:} The author thanks Prof. H. Furstenberg and Prof. T. Meyerovitch for their valuable remarks.

\vspace{0.4cm}

\section{Notation}

\begin{itemize}
    \item ${\mathbb{N}_0}$ is the non-negative integers. 

    \item $\Lambda$ is some finite alphabet (i.e. a finite set).

    \item $\sigma$ is the left shift map on $\Lambda^{\mathbb{N}_0}$ and, by abuse of notation, also on other shift spaces.

    \item For $a_0,\dots,a_n\in \Lambda$, the cylinder $\{x\in\Lambda^{\mathbb{N}_0}\,:\,x_0=a_0,\dots,x_n=a_n\}$ is denoted by $\left[a_0,\dots,a_n\right]$.

    \item For a probability measure $\nu$ on a standard Borel space $\left(X,\mathcal{B}\right)$ (i.e. a measurable space which is isomorphic to a Borel subset of a compact metric space) and a sub-$\sigma$-algebra $\mathcal{A}$ of $\mathcal{B}$, we denote the conditional probability of $\nu$ with respect to $\mathcal{A}$ by $\nu_{x}^{\mathcal{A}}$.

\end{itemize}

\section{Our Point of Departure: The Two Theorems of Pivato}

\label{pivato_sect}

Our point of departure is two theorems proved by M. Pivato in \cite{key-1}. Before we formulate them and include their short proofs, a general remark concerning bi-permutative cellular automata is in order.\\

\begin{remark}
\label{biper}
    Given a right permutative cellular automaton $\tau : \Lambda^{\mathbb{N}_0} \rightarrow \Lambda^{\mathbb{N}_0}$, the values $\left( \left( \tau^i \left( x \right) \right) _0 \right)_{i=0}^\infty$ determine  $\left( \left( \tau^i \left( \sigma \left(x\right) \right) \right) _0 \right)_{i=0}^\infty$ (where $\sigma$ is the shift) also by a right permutative cellular automaton $\hat{\tau}$. The map from $ \Lambda^{\mathbb{N}_0}$ to itself defined by $x\mapsto\left( \left( \tau^i \left( x \right) \right) _0 \right)_{i=0}^\infty$ is thus an isomorphism between the topological dynamical systems $\left(  \Lambda^{\mathbb{N}_0},\sigma,\tau \right)$ and $\left(  \Lambda^{\mathbb{N}_0},\hat{\tau},\sigma \right)$. Moreover, $\tau$ is bi-permutative if and only if $\hat{\tau}$ is.
\end{remark}

\vspace{1cm}

The first of the two Theorems (which appears in \cite{key-1} as Theorem 4.1) is essentially the following:\\

\begin{theorem}
\label{pivato1}
Let $\tau:\Lambda^{{\mathbb{N}_0}}\rightarrow\Lambda^{{\mathbb{N}_0}}$
be a left permutative cellular automaton, and let $\mu$ be an ergodic $\tau$-invariant Borel probability measure on $\Lambda^{{\mathbb{N}_0}}$. Form the two-dimensional subshift $\left\{x\in\Lambda^{{\mathbb{N}_0}\oplus\mathbb{Z}}\,:\, \forall i\in\mathbb{Z} \,\, x_{\left(\cdot\right),i+1} = \tau\left(x_{\left(\cdot\right),i}\right)\right\}$ and denote its Borel $\sigma$-algebra by $\mathcal{\tilde{B}}$, and the measure on it as an extension of the original system by $\tilde{\mu}$. Then there exists  $1\leq k_{\mu} \leq\left|\Lambda\right|$ for which $\tilde{\mu}_{x}^{\sigma_{\left(1,0\right)}^{-1}\mathcal{\tilde{B}}}\left(\left\{ x\right\} \right)=\frac{1}{k_{\mu}}$
$\tilde{\mu}$-almost-surely.

\end{theorem}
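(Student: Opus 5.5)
The plan is to show that the conditional measure $\tilde{\mu}_{x}^{\mathcal{A}}$, where $\mathcal{A}=\sigma_{(1,0)}^{-1}\mathcal{\tilde{B}}$, is a.s. purely atomic with at most $|\Lambda|$ atoms. Then I will show that the function $f(x)=\tilde{\mu}_{x}^{\mathcal{A}}(\{x\})$ is invariant under the time shift, and use ergodicity to make it constant. Write points of the two-dimensional subshift as $x=(x_{j,i})_{j\in\mathbb{N}_0,\,i\in\mathbb{Z}}$, with $x_{j,i+1}=r(x_{j,i},x_{j+1,i})$. The $\sigma$-algebra $\mathcal{A}$ is generated by the coordinates $x_{j,i}$ with $j\geq 1$ and $i\in\mathbb{Z}$, i.e. by all columns except column $0$.

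\textbf{Step 1 (finite fibres).} Fix the columns $j\geq1$. Left permutativity lets us solve $x_{0,i}=r(\cdot,x_{1,i})^{-1}(x_{0,i+1})$. The forward relation $x_{0,i+1}=r(x_{0,i},x_{1,i})$ also holds. Hence the whole column $0$ is determined by the single value $x_{0,0}$. So each atom of $\mathcal{A}$, namely the set of points sharing columns $j\geq1$, has at most $|\Lambda|$ elements. Since $\mathcal{\tilde{B}}$ is standard, $\tilde{\mu}_x^{\mathcal{A}}$ is a.s. supported on the $\mathcal{A}$-atom of $x$. It is therefore a.s. a probability measure on at most $|\Lambda|$ points, and $f(x)>0$ a.s.

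\textbf{Step 2 (invariance).} Let $T=\sigma_{(0,1)}$ be the time shift, so $T$ acts as $\tau$ on each row. On the two-dimensional subshift $T$ is an invertible, $\tilde{\mu}$-preserving homeomorphism. It commutes with $\sigma_{(1,0)}$, and $T^{-1}\mathcal{A}=\mathcal{A}$, because $\mathcal{A}$ is generated by all time coordinates of the columns $j\geq1$. By uniqueness of conditional measures we get $\tilde{\mu}^{\mathcal{A}}_{Tx}=T_*\tilde{\mu}^{\mathcal{A}}_{x}$ a.s., and hence $f\circ T=f$ $\tilde{\mu}$-a.s. Since $\mu$ is $\tau$-ergodic, its natural extension $(\tilde{\mu},T)$ is ergodic. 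So $f\equiv c$ a.s. for some constant $c>0$.

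\textbf{Step 3 (integrality).} The set $N=\{f\neq c\}$ is null, so $\int\tilde{\mu}_x^{\mathcal{A}}(N)\,d\tilde{\mu}=0$. Thus for a.e. $x$ every atom $y$ of $\tilde{\mu}^{\mathcal{A}}_x$ has $y\notin N$. Also $\tilde{\mu}^{\mathcal{A}}_y=\tilde{\mu}^{\mathcal{A}}_x$ for such $y$, since both lie in the same $\mathcal{A}$-atom, outside a further null set. Hence every atom has weight exactly $c$, and their number is $k_\mu=1/c$, an integer with $1\leq k_\mu\leq|\Lambda|$ by Step 1.

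The only delicate point is the measure-theoretic bookkeeping in Steps 2 and 3. This covers the equivariance of the disintegration under $T$ and the passage from ``$f=c$ a.s.'' to ``all atoms in a.e. fibre have weight $c$''. Both are routine consequences of the uniqueness of conditional measures on standard Borel spaces.
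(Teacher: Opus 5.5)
Your proposal is correct and follows essentially the same route as the paper. The paper also makes $x\mapsto\tilde{\mu}_{x}^{\sigma_{(1,0)}^{-1}\mathcal{\tilde{B}}}(\{x\})$ a $\sigma_{(0,1)}$-invariant function using $\sigma_{(0,1)}^{-1}\mathcal{\tilde{B}}=\mathcal{\tilde{B}}$ together with the invariance of $\tilde{\mu}$, makes it constant by ergodicity, and obtains $r=1/k_\mu$ because within each atom the whole point is determined by $x_{(0,0)}$; your Step 3 just writes out the atom-by-atom bookkeeping that the paper leaves implicit.
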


\begin{xrem}
The original formulation was weaker but its proof proves the result stated above. There are two main differences:\\
\begin{enumerate}
\item In the original formulation the roles of the one-dimensional
shift and $\tau$ are reversed (in particular, the two dimensional
subshift is then $\{ x\in\Lambda^{\mathbb{Z}\oplus{\mathbb{N}_0}}\,:\,\forall i\in\mathbb{Z\,}~~x_{\left(\cdot\right),i+1} = \tau\left(x_{\left(\cdot\right),i}\right)\} $
and $\tau$ is required to be bi-permutative. The formulation here
is stronger because the system of $\Lambda^{{\mathbb{N}_0}}$ is isomorphic to $\{ x\in\Lambda^{{\mathbb{N}_0}\oplus{\mathbb{N}_0}}\,:\,\forall i\in\mathbb{Z\,}~~x_{\left(\cdot\right),i+1} = \tau\left(x_{\left(\cdot\right),i}\right)\}$ and -  for bi-permutative automata - also each $x_{j,\left(\cdot\right)}$ column determines the column $x_{j+1,\left(\cdot\right)}$ by a bi-permutative cellular automaton (cf. Remark \ref{biper}) and this corresponds to the effect of $\sigma_{\left(1,0\right)}$.
\item  In the original formulation the premise required the measure to be invariant relative to both $\sigma$ and $\tau$ whereas the proof does not utilize this.

\end{enumerate}

\end{xrem}

\begin{proof}

$\tilde{\mu}_{x}^{\sigma_{\left(1,0\right)}^{-1}\mathcal{\tilde{B}}}\left(\left\{ x\right\} \right) = \tilde{\mu}_{x}^{\sigma_{\left(1,0\right)}^{-1}  \sigma_{\left(0,1\right)}^{-1} \mathcal{\tilde{B}}}\left(\left\{ x\right\} \right) $ and the invariance of $\tilde{\mu}$ relative to $\sigma_{\left(0,1\right)}$
implies that $\tilde{\mu}_{x}^{\sigma_{\left(1,0\right)}^{-1}  \sigma_{\left(0,1\right)}^{-1} \mathcal{\tilde{B}}}\left(\left\{ x\right\} \right) = \tilde{\mu}_{\sigma_{\left(0,1\right)}\left(x\right)}^{\sigma_{\left(1,0\right)}^{-1}\mathcal{\tilde{B}}}\left(\left\{ \sigma_{\left(0,1\right)}\left(x\right)\right\} \right)$, so

\[
\tilde{\mu}_{x}^{\sigma_{\left(1,0\right)}^{-1}\mathcal{\tilde{B}}}\left(\left\{ x\right\} \right) =\tilde{\mu}_{\sigma_{\left(0,1\right)}\left(x\right)}^{\sigma_{\left(1,0\right)}^{-1}\mathcal{\tilde{B}}}\left(\left\{ \sigma_{\left(0,1\right)}\left(x\right)\right\} \right),
\]

i.e. $x\mapsto\tilde{\mu}_{x}^{\sigma_{\left(1,0\right)}^{-1}\mathcal{\tilde{B}}}\left(\left\{ x\right\} \right)$ is a $\sigma_{\left(0,1\right)}$-invariant function. Hence by the ergodicity of $\sigma\left(0,1\right)$ relative to $\tilde{\mu}$
there exists a real number $0\leq r\leq1$ so that $\tilde{\mu}_{x}^{\sigma_{\left(1,0\right)}^{-1}\mathcal{\tilde{B}}}\left(\left\{ x\right\} \right)=r$ $\tilde{\mu}$-almost-surely. \\

In the atom of $x$ in the countably generated $\sigma$-algebra $\sigma_{\left(1,0\right)}^{-1}\mathcal{\tilde{B}}$, $x$ is determined by $x_{\left(0,0\right)}$ ($x_{\left(0,i\right)}$ is detrmined for positive $i$ by the rule of $\tau$ and for negative
$i$ by its left permutativity), hence the atom contains $\left| \Lambda \right|$ points and there is an integer $1\leq k_\mu \leq\left|\Lambda\right|$
so that $k_\mu \cdot r=1$, i.e. $r=\frac{1}{k_\mu}$. 
\end{proof}

~\\

From this theorem, assuming the measure is also shift-invariant, an important corollary was drawn stating that the measure-theoretical entropy (relative to both the shift and the automaton) is $\log k$ for some integer $1 \leq k \leq \left| \Lambda \right|$.\\

A generaliztion to Theorem \ref{pivato1} with $k_{\mu,x}$ in place of $k_x$ is false if the ergodicity assumption is omitted. The reason that the proof fails is that then the ergodic decomposition relative to $\sigma_{\left(0,1\right)}$ need not be measure theoretically $\sigma_{\left(1,0\right)}^{-1}\tilde{\mathcal{B}}$-measurable.\\

However, if we assume additionally that $\tau$ is bi-permutative and that $\mu$ is also shift-invariant (this is anyway the case that is in our focus of attention), the theorem can then be improved. It can be formulated with the conditioning of the probability on values along the indices of the point in the system $\Lambda^{{\mathbb{N}_0}}$ - and not on the two-dimensional subshift above. Moreover, the formulation will be a generalization that does not require any form of ergodicity.\\

This improved version will be stated and proved in the next section as Theorem \ref{biper_th}.\\

The second theorem from \cite{key-1} (appears there as Theorem 3.1) concerns multiplication cellular automata of finite groups (not necessarily Abelian).

\begin{theorem}
\label{pivato2}

Let $G$ be a finite group and let $\mu$ be an ergodic $\tau_G$-invariant Borel probability measure on $G^{{\mathbb{N}_0}}$. Then there exists a  subgroup $ H_{\mu} \leq G$ so that $\mu_{x}^{\sigma^{-1}\mathcal{B}}$ ($\mathcal{B}$ being the Borel $\sigma$-algebra of $G^{\mathbb{N}_0}$) as a measure on $G$ is $\mu$-almost-surely the uniform measure on one of its right cosets.

\end{theorem}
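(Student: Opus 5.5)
The plan is to encode the conditional measures as random probability vectors on $G$. Then a strict-convexity argument combined with $\tau_G$-invariance shows that these vectors are transported by right translations, and ergodicity fixes their shape. For $\mu$-a.e.\ $x$ let $p_x:=\mu_x^{\sigma^{-1}\mathcal{B}}$, viewed as a probability vector on $G$ that depends only on $(x_i)_{i\ge1}$. For $g\in G$ write $p\cdot g$ for the right translate $(p\cdot g)(a)=p(ag^{-1})$.

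\textbf{Step 1 (transport identity).} Put $y=\tau_G(x)$. Then $y_0=x_0x_1$, and $(y_i)_{i\ge1}$ is a function of $(x_i)_{i\ge1}$. So $\sigma^{-1}\mathcal{B}$ pulled back by $\tau_G$ is coarser than $\sigma^{-1}\mathcal{B}$. It is finer only by the single coordinate $x_1$, since $(x_i)_{i\ge1}$ is recovered from $(y_i)_{i\ge1}$ and $x_1$ by right permutativity. Given $(x_i)_{i\ge1}$, the conditional law of $y_0$ is $p_x\cdot x_1$. Since $\mu$ is $\tau_G$-invariant, the conditional law of $y_0$ given $(y_i)_{i\ge1}$ is $p_{\tau_G x}$. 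The tower property then gives
\[
p_{\tau_G x}=\mathbb{E}_\mu\bigl[\,p_x\cdot x_1 \,\big|\, \tau_G^{-1}\sigma^{-1}\mathcal{B}\,\bigr](x).
\]
Take $\Phi(p)=\sum_{g}p(g)^2$, which is strictly convex and invariant under translations. Jensen gives $\int\Phi(p_{\tau_G x})\,d\mu\le\int\Phi(p_x\cdot x_1)\,d\mu=\int\Phi(p_x)\,d\mu$. By $\tau_G$-invariance the two outer integrals are equal, so equality holds in Jensen. Strict convexity then forces $p_x\cdot x_1=p_{\tau_G x}$ $\mu$-a.s.; in particular $p_x\cdot x_1$ is measurable with respect to $(y_i)_{i\ge1}$.

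\textbf{Step 2 (constant profile).} By Step 1, the multiset of values of $p_x$ is a $\tau_G$-invariant function. Ergodicity makes it a.s.\ constant, so $|S_x|$ is a.s.\ constant, where $S_x=\operatorname{supp}p_x$.

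\textbf{Step 3 (coset structure, the main obstacle).} It remains to show that $p_x$ is uniform on a right coset $H_\mu g$ of a subgroup $H_\mu$ independent of $x$. I would exploit the fibres of $\tau_G$. Two points $x,x'$ with $\tau_G(x)_{\ge1}=\tau_G(x')_{\ge1}$ satisfy $x'_1=x_1g$, $x'_2=g^{-1}x_2$, and so on. Step 1 then gives $p_{x'}=p_x\cdot x_1x_1'^{-1}$ whenever both have positive conditional weight. Applying this one level up, $p_x$ is itself the image under $\tau_G$ of a mixture over a fibre of preimages $z$, since $x=\tau_G z$ with $x_0=z_0z_1$. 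Using the identity for $z$ instead of $x$, I would show the following: for a.e.\ $x$ and any $a,b\in S_x$, the vector $p_x$ is invariant under left multiplication by $ab^{-1}$. The idea is that the choices $x_0=a$ and $x_0=b$ correspond to different admissible values $z_1$ in one fibre, and the rigidity of Step 1 transports the whole conditional vector between them.

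Granting this, $H_x:=\{h:\ h\cdot p_x=p_x\}$ is a subgroup that acts transitively on $S_x$. Hence $S_x=H_xa$ and $p_x$ is constant on it, i.e.\ uniform on a right coset. Finally, $p_{\tau_G x}=p_x\cdot x_1$ shows that the left stabiliser is unchanged, $H_{\tau_G x}=H_x$, because right translation commutes with left multiplication. The map $x\mapsto H_x$ takes finitely many values and is $\tau_G$-invariant, so by ergodicity it is a.s.\ a constant $H_\mu$.

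I expect the transitivity claim in Step 3 to be the delicate point. I would first try to derive it by running Step 1 on the two-step conditional law of $(z_0,z_1)$ given $(z_i)_{i\ge2}$.
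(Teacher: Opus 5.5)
Your Steps 1 and 2 are sound. The strict-convexity argument in Step 1 is correct and uses only $\tau_G$-invariance. The gap is Step 3: it carries the whole content of the theorem, it is left unproved, and the route you sketch for it cannot work. Everything you plan to feed into the transitivity claim also holds for non-ergodic $\tau_G$-invariant measures, and for those the claim is false. That includes the transport identity $p_{\tau_G x}=p_x\cdot x_1$, its analogue for the two-step law of $(z_0,z_1)$ given $(z_i)_{i\ge2}$, and the constancy of the value-multiset.

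For instance, take $G=\mathbb{Z}/3\mathbb{Z}$ and $\mu=\frac12(\delta_{(0,0,0,\dots)}+\delta_{(1,0,0,\dots)})$; both points are fixed by $\tau_G$. Almost surely $(x_i)_{i\ge1}=(0,0,\dots)$ and $p_x=\frac12(\delta_0+\delta_1)$. All your identities hold trivially, yet $S_x=\{0,1\}$ is not a coset and $p_x$ is not invariant under translation by $1$. So ergodicity has to enter exactly at this step, not only afterwards to make $H_x$ constant.

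Your heuristic is also off. Step 1 applied at a preimage $z$ of $x$ says that the conditional law of $x_0=z_0z_1$ given $(z_i)_{i\ge1}$ is already $p_x$. Thus $x_0$ is conditionally independent of $z_1$ given $(x_i)_{i\ge1}$, and the choices $x_0=a$, $x_0=b$ do not correspond to different values of $z_1$.

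The missing idea is to apply ergodicity to a $\tau_G$-invariant that records where $x_0$ sits inside $S_x$, not to the value-multiset or the left stabiliser. Put $\rho_x(g):=p_x(gx_0)$. By Step 1 and associativity, $\rho_{\tau_G x}(g)=p_{\tau_G x}(gx_0x_1)=p_x(gx_0)=\rho_x(g)$, so by ergodicity $\rho_x=\rho$ almost surely. Let $\Omega$ be the full-measure set on which this holds. For a.e. $x$ the conditional measure $\mu_x^{\sigma^{-1}\mathcal{B}}$ gives $\Omega$ full mass, so every $x^{(b)}=(b,x_1,x_2,\dots)$ with $b\in S_x$ also lies in $\Omega$, and $p_{x^{(b)}}=p_x$. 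Comparing $\rho_{x^{(b)}}$ with $\rho_x$ gives $\rho(gh)=\rho(g)$ for all $g\in G$ and all $h=bx_0^{-1}\in\mathrm{supp}\,\rho$. Since $\rho(1)=p_x(x_0)>0$, the support $H_\mu$ of $\rho$ is closed under multiplication, hence a subgroup. Then $\rho$ is uniform on $H_\mu$, and $p_x$ is uniform on $H_\mu x_0$.

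This is the conditional-measure counterpart of the paper's proof. There, $H_{\mu,x}=\{y_0x_0^{-1}\}$ is shown $\tau_G$-invariant by associativity, and uniformity comes from genericity. If $x$ and $(gx_0,x_1,x_2,\dots)$ are both $\tau_G$-generic, then $\mu$ is invariant under left multiplication by $g$ at index $0$, because that map commutes with $\tau_G$.
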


\begin{xrem}
The difference in formulation between here and the original formulation is that in original formulation the premise required the measure to be invariant relative to both $\sigma$ and $\tau$ whereas the proof does not utilize this.
\end{xrem}

\begin{proof}

By the ergodicity of $\mu$ relative to $\tau_{G}$, the points in $G^{{\mathbb{N}_0}}$ are $\mu$-almost-surely generic relative to $\tau_{G}$. For a generic $x\in G^{\mathbb{N}_0}$ relative to $\tau_{G}$, consider all generic  $y\in G^{\mathbb{N}_0}$ relative to $\tau_{G}$ that coincide with $x$ at all indices $i\geq1$, then the associativity of $G$ implies that, for every such $y$, $\mu$ is necessarily invariant relative to the multiplication of the value at index zero by $y_{0}x_{0}^{-1}$. Denote by $H_{\mu,x}$ the set of all such values $y_{0}x_{0}^{-1}$.\\

By the associativity of $G$, the map $x\mapsto H_{\mu,x}$ is $\tau_{G}$-invariant. By applying the ergodicity of $\mu$ relative to $\tau_{G}$ once again we deduce that there exists $H_\mu \subseteq G$ so that $H_{\mu,x}=H_\mu$ $\mu$-almost-surely. In particular, $H_\mu$ is thus closed under the group multiplication and hence it is a subgroup.

\end{proof} 

\vspace{0.3cm}

\begin{remark}
As a corollary of Theorem \ref{pivato2}, we observe an that the theorem enables us to attach a subgroup $H\leq G$ to any \underline{shift}-invariant ergodic Borel probability measure on $G^{\mathbb{N}_0}$. The procedure is done by pushing the measure forward by the isomorphism between $\left(  G^{\mathbb{N}_0},\sigma \right)$ and $\left(  G^{\mathbb{N}_0},\tau_G \right)$ described in Remark \ref{biper} and then applying the theorem there. This $H$ is just the set of all elements $h\in G$ satisfying that the measure is preserved by the map $\left( x_0,x_1,x_2,\dots \right) \mapsto \left( hx_0,hx_1,hx_2,\dots \right)$.

\end{remark}

When $\mu$ is also shift-invariant we will state and prove a generalized version of Theorem \ref{pivato2} that does not require ergodicity (again, a generalization with $H_{\mu,x}$ in place of $H_\mu$ is false without adding the shift-invariance assumption, and the proof fails because the ergodic decomposition relative to $\tau$ need not be measure theoretically $\sigma^{-1}\mathcal{B}$-measurable). This is Theorem \ref{group_th}, and it will be proved with the aid of Theroem \ref{biper_th}.\\

In sake of completeness, we apply the method of the proof of Theorem \ref{pivato2} in the next subsection to prove a related theorem, interesting in its own right, but not required afterwards (except for its own improvement - Theorem \ref{HMM_gen}).

\subsection{A Slight Diversion 1: Ergodicity Relative to the Shift}

Allowing ourselves a slight diversion from the main thread, the following result is proved in \cite{key-8}.\\

\begin{proposition}
\label{HMM}

Let $p>0$ be a prime integer, and let $\mu$ be an ergodic shift-invariant Borel probability measure on $\left( \mathbb{Z}/p\mathbb{Z} \right)^{{\mathbb{N}_0}}$ that is also invariant relative to $\tau_{ \mathbb{Z}/p\mathbb{Z} }$. If $h_\mu \left( \sigma \right) > 0$ then $\mu$ is the uniform measure.
\end{proposition}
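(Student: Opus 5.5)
We plan to move the problem through the isomorphism of Remark \ref{biper}, so that the roles of $\sigma$ and $\tau=\tau_{\mathbb{Z}/p\mathbb{Z}}$ are exchanged, and then to run the argument of Theorem \ref{pivato2} on the image measure. Write $\Phi(x)=\left(\left(\tau^i(x)\right)_0\right)_{i=0}^\infty$ and $\nu=\Phi_*\mu$. By Remark \ref{biper}, $\Phi$ conjugates $(\sigma,\tau)$ on $(\mathbb{Z}/p\mathbb{Z})^{\mathbb{N}_0}$ to $(\hat\tau,\sigma)$. Here $\hat\tau$ is again bi-permutative; explicitly, with additive notation, $\left(\hat\tau(y)\right)_i=y_{i+1}-y_i$. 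Consequently:
\begin{itemize}
\item $\nu$ is $\sigma$-invariant, because $\mu$ is $\tau$-invariant;
\item $\nu$ is ergodic relative to $\hat\tau$, because $\mu$ is ergodic relative to $\sigma$;
\item $h_\nu(\sigma)=h_\mu(\tau)$.
\end{itemize}
By the corollary of Theorem \ref{pivato1} quoted above (the entropies relative to the shift and to the automaton coincide and equal $\log k$), we get $h_\mu(\tau)=h_\mu(\sigma)>0$. Hence $h_\nu(\sigma)=H_\nu\left(y_0\mid y_1,y_2,\dots\right)>0$.

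The main step, which I expect to be the only real obstacle, is an analogue of Theorem \ref{pivato2} for $\hat\tau$ in place of $\tau_G$: the conditional measure $\nu_y^{\sigma^{-1}\mathcal{B}}$ is almost surely uniform on a coset of a fixed subgroup $H\le\mathbb{Z}/p\mathbb{Z}$. Theorem \ref{pivato2} is stated only for $\tau_G$, so we will repeat its proof, replacing associativity by linearity of $\hat\tau$.
\begin{enumerate}
\item Take $\hat\tau$-generic points $y,y'$ that agree at all indices $\ge1$. Then $y'-y=c\,\delta_0$ for some $c\in\mathbb{Z}/p\mathbb{Z}$.
\item By linearity, $\hat\tau^n(y')-\hat\tau^n(y)=\hat\tau^n(c\,\delta_0)$. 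This difference is supported on the single coordinate $0$, where it equals $(-1)^n c$.
\item Genericity of both points then shows that $\nu$ is invariant under adding $\pm c$ at index $0$.
\item The set $H_y$ of such admissible $c$ is closed under addition and is $\hat\tau$-invariant, by the same linearity. Ergodicity of $\nu$ relative to $\hat\tau$ makes it an a.s. constant subgroup $H$.
\item Invariance of $\nu$ under the translations by $H$ at index $0$, together with the fact that the support of the conditional measure lies in $y_0+H_y$, gives that the conditional law of $y_0$ given $y_{\ge1}$ is uniform on $y_0+H$.
\end{enumerate}

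Since the conditional entropy is positive, $H\neq\{0\}$, and as $p$ is prime, $H=\mathbb{Z}/p\mathbb{Z}$. So $y_0$ is uniform and independent of $(y_1,y_2,\dots)$. Using the $\sigma$-invariance of $\nu$ and induction on cylinder length, $\nu[a_0,\dots,a_n]=\frac1p\,\nu[a_1,\dots,a_n]=p^{-(n+1)}$, so $\nu$ is the uniform Bernoulli measure. Finally, $\mu=\Phi^{-1}_*\nu$ is uniform. Indeed, the first $n+1$ coordinates of $\Phi(x)$ depend on $x_0,\dots,x_n$ through a triangular bijection of $(\mathbb{Z}/p\mathbb{Z})^{n+1}$: coordinate $i$ equals $x_i$ plus a function of $x_0,\dots,x_{i-1}$. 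Hence $\Phi$ maps each length-$(n+1)$ cylinder bijectively onto a length-$(n+1)$ cylinder, and uniformity transfers.
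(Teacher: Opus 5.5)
Your route is the paper's own: conjugate by the map of Remark~\ref{biper} so that $\sigma$ and $\tau$ trade roles, rerun Pivato's genericity argument for $\nu$, and use that $p$ is prime. However, step~3 fails as written, and it fails exactly at the point the paper has to work around. Write $T_c$ for ``add $c$ at index $0$''. Your computation in step~2 says $\hat\tau\circ T_c=T_{-c}\circ\hat\tau$, so $T_c$ does not commute with $\hat\tau$. In Pivato's proof, left multiplication at index $0$ does commute with $\tau_G$, and that is what associativity buys. Hence $\hat\tau$-genericity of $y$ and of $T_c y$ only gives
\[
\int f\,d\nu=\lim_{N\to\infty}\frac{1}{N}\sum_{n=0}^{N-1} f\left(T_{(-1)^n c}\,\hat\tau^{n}y\right),
\]
which is not an orbit average of $f\circ T_c$. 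Separating even and odd times needs genericity for $\hat\tau^2$, i.e.\ ergodicity of $\nu$ under $\hat\tau^2$ (equivalently, of $\mu$ under $\sigma^2$), and this is not assumed. Suppose $\nu=\frac{1}{2}(\nu_1+\nu_2)$, with $\nu_2=\hat\tau_*\nu_1$ the $\hat\tau^2$-ergodic components. Then all you get is $\nu=\frac{1}{2}\left((T_c)_*\nu_1+(T_{-c})_*\nu_2\right)$. This holds, for example, whenever $(T_c)_*\nu_1=\nu_2$, and in that case $(T_c)_*\nu=\frac{1}{2}\left(\nu_2+(T_{2c})_*\nu_1\right)$, which need not equal $\nu$. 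Step~4 inherits the defect. Linearity gives $H_{\hat\tau y}=-H_y$, not $H_{\hat\tau y}=H_y$, and closure under addition (hence symmetry) was derived from step~3.

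This is why the paper proves Prop.~\ref{HMM_gen1} only under ergodicity relative to $\sigma^2$. There $\hat\tau^2\circ T_c=T_c\circ\hat\tau^2$, and your steps 1--5 go through verbatim with $\hat\tau^2$-generic points. The paper says it does not know whether $\sigma$-ergodicity suffices, and obtains Prop.~\ref{HMM} via Cor.~\ref{hmm_gen2}, as follows. Since $H_{\hat\tau y}=-H_y$, only $|H_y|$ is $\hat\tau$-invariant, so it is a.s.\ a constant $C$. On each of the (at most two) $\hat\tau^2$-ergodic components the genericity argument does yield a subgroup, so $C$ is the order of a subgroup of $\mathbb{Z}/p\mathbb{Z}$. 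Then $\log C=h_\nu(\sigma)>0$ forces $C=p$.

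Along this route you must also check that the components have the same conditional measures given $\sigma^{-1}\mathcal{B}$ as $\nu$. This holds because $\nu$ is invariant under both maps, so the $\hat\tau^2$-invariant $\sigma$-algebra lies in the Pinsker $\sigma$-algebra (Lemmas~\ref{pinsker_lemma1}, \ref{pinsker_lemma2}), hence in $\sigma^{-1}\mathcal{B}$.

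Alternatively, Prop.~\ref{HMM_gen} avoids genericity altogether. There the conditional measure is uniform on a coset of a subgroup $K_y$, obtained from Theorem~\ref{biper_th} and Pinsker-measurability. So $K_{\hat\tau y}=-K_y=K_y$, and $\hat\tau$-ergodicity alone makes it constant.

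The rest of your argument is fine, with one citation fix. The equality $h_\mu(\tau)=h_\mu(\sigma)$ should not be cited from the corollary of Theorem~\ref{pivato1}, which assumes $\tau$-ergodicity. It holds anyway: with $\xi$ the partition by the value at index $0$, $\bigvee_{i<n}\sigma^{-i}\xi=\bigvee_{i<n}\tau^{-i}\xi$ by permutativity.
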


\vspace{0.4cm}

Here the ergodicity assumption is relative to $\sigma$ (if instead it were relative to $\tau_{ \mathbb{Z}/p\mathbb{Z} }$ then this would have been an immediate conclusion from Theorem \ref{pivato2}). We can generalize this result by a proof similar to that of Theorem \ref{pivato2}. In particular, this generalization has no $\tau_{ \mathbb{Z}/p\mathbb{Z}}$-invariance assumption.\\

\begin{proposition}
\label{HMM_gen1}

Let $G$ be a finite \underline{Abelian} group, and let $\mu$ be a shift-invariant Borel probability measure on $G^{{\mathbb{N}_0}}$ that is ergodic relative to $\sigma ^2$. Then there exists a subgroup $K_{\mu} \leq G$ so that $\mu_{x}^{\tau_G^{-1}\mathcal{B}}$ ($\mathcal{B}$ being the Borel $\sigma$-algebra of $G^{\mathbb{N}_0}$) as a measure on $G$ (the atom of $x$ in $\tau_G^{-1}\mathcal{B}$ is determined by $x_0$) is $\mu$-almost-surely the uniform measure on one of its right cosets.

\end{proposition}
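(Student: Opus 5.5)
The plan is to adapt the proof of Theorem \ref{pivato2}, replacing the role of $\tau_G$ by $\sigma^2$ and the "multiply the zero coordinate" symmetry by an alternating multiplication along the whole sequence. For $g\in G$ define $M_g:G^{\mathbb{N}_0}\rightarrow G^{\mathbb{N}_0}$ by $\left(M_g(x)\right)_i=g x_i$ for even $i$ and $\left(M_g(x)\right)_i=g^{-1}x_i$ for odd $i$. Three properties of $M_g$ will be used.
\begin{enumerate}
\item Since $G$ is Abelian, $\left(\tau_G(M_g x)\right)_i = g^{\pm1}x_i\, g^{\mp1}x_{i+1} = x_ix_{i+1}$. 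Hence $\tau_G\circ M_g=\tau_G$, and $M_g$ maps each atom of $\tau_G^{-1}\mathcal{B}$ to itself.
\item Conversely, given $\tau_G(x)$, the point $x$ is determined by $x_0$, through $x_{i+1}=x_i^{-1}\left(\tau_G(x)\right)_i$. So the atom of $x$ is exactly $\{M_g x : g\in G\}$, and $M_gx$ has zero coordinate $gx_0$.
\item $M_g$ is a homeomorphism, $M_g\circ M_h=M_{gh}$, and $M_g$ commutes with $\sigma^2$.
\end{enumerate}

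Next I would set $K_\mu=\{g\in G : (M_g)_*\mu=\mu\}$. By property 3 this is a subgroup. Since $\mu$ is shift-invariant it is $\sigma^2$-invariant, and by assumption it is $\sigma^2$-ergodic, so the set $\mathcal{G}$ of $\sigma^2$-generic points for $\mu$ has full measure.

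The key claim, and the step I expect to be the main obstacle, is the following: if $x\in\mathcal{G}$ and $M_gx\in\mathcal{G}$, then $g\in K_\mu$. The argument is as follows. Because $M_g$ is continuous and commutes with $\sigma^2$, the empirical measures of $M_gx$ along $\sigma^2$ are the push-forwards under $M_g$ of those of $x$. These converge to $(M_g)_*\mu$. Since $M_gx$ is also generic, they converge to $\mu$ as well, so $(M_g)_*\mu=\mu$. The care needed here is just in checking that continuity makes push-forward commute with weak-$*$ limits, which it does on the compact space $G^{\mathbb{N}_0}$.

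With this in hand, the conditional measures are controlled from both sides.
\begin{itemize}
\item \textbf{Invariance.} Take $g\in K_\mu$. Since $M_g$ preserves $\mu$ and the $\sigma$-algebra $\tau_G^{-1}\mathcal{B}$ (property 1), uniqueness of conditional measures gives $\mu^{\tau_G^{-1}\mathcal{B}}_{M_gx}=(M_g)_*\mu^{\tau_G^{-1}\mathcal{B}}_x$ almost surely. The left-hand side equals $\mu^{\tau_G^{-1}\mathcal{B}}_x$, because $M_gx$ lies in the same atom. Intersecting over the finitely many $g\in K_\mu$, the conditional measure is a.s. $M_g$-invariant for every $g\in K_\mu$.
\item \textbf{Support.} Since $\mu(\mathcal{G})=1$, for a.e. $x$ the conditional measure $\mu^{\tau_G^{-1}\mathcal{B}}_x$ is concentrated on $\mathcal{G}$, and we may also assume $x\in\mathcal{G}$. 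So if $M_gx$ has positive conditional mass, then $M_gx\in\mathcal{G}$, and the key claim gives $g\in K_\mu$. Thus the conditional measure is supported on $\{M_gx : g\in K_\mu\}$.
\end{itemize}
A probability measure on the finite $K_\mu$-orbit $\{M_gx : g\in K_\mu\}$ that is invariant under the transitive action of $K_\mu$ on it must be uniform. Reading off zero coordinates (property 2), $\mu^{\tau_G^{-1}\mathcal{B}}_x$ viewed as a measure on $G$ is the uniform measure on the coset $K_\mu x_0$. Since $G$ is Abelian, this coset is in particular a right coset, which is the assertion.
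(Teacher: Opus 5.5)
Your proof is correct. Its core is the same as the paper's, namely Pivato's argument from Theorem \ref{pivato2}: two $\sigma^2$-generic points in the same fibre of $\tau_G$ differ by a continuous symmetry commuting with $\sigma^2$, and this forces $\mu$ to be invariant under that symmetry. The organization is different, though.

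The paper first conjugates, via the isomorphism of Remark \ref{biper}, $\tau_G$ to $\sigma$ and $\sigma$ to $\hat{\tau_G}$. Then $\tau_G^{-1}\mathcal{B}$ becomes $\sigma^{-1}\mathcal{B}$, and your $M_g$ becomes translation of the zero coordinate by $g$. It next defines a point-dependent set $\hat{K}_{\hat{\mu},x}$ of differences $y_0-x_0$ over generic $y$ in the atom of $x$. It uses $\hat{K}_{\hat{\mu},\hat{\tau_G}(x)}=-\hat{K}_{\hat{\mu},x}$, the counterpart of your relation $\sigma M_g=M_{g^{-1}}\sigma$, to get $\hat{\tau_G}^2$-invariance, invokes ergodicity to make the set a.s.\ constant, and only then argues closure under addition.

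You instead take $K_\mu$ to be the stabiliser of $\mu$ under $g\mapsto M_g$, so the subgroup property comes for free. Moreover, $M_g$ maps generic points to generic points when $g\in K_\mu$. Together with your key claim, this shows that for every generic $x$, $M_gx$ is generic exactly when $g\in K_\mu$. So the ``invariant, hence constant by ergodicity'' step is bypassed, and ergodicity enters only through a.e.\ genericity. You also prove explicitly that $\mu^{\tau_G^{-1}\mathcal{B}}_x$ is uniform on the coset (invariance under $K_\mu$ plus the support bound), which the paper leaves implicit.

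What the paper's formulation buys is continuity with the rest of the text. It stays in the $\sigma^{-1}\mathcal{B}$-conditioning framework of Theorems \ref{pivato2} and \ref{biper_th}. Its pointwise set and sign-flip relation are reused in Corollary \ref{hmm_gen2}, where only $\sigma$-ergodicity is assumed, and in Proposition \ref{HMM_gen}, where there is no ergodicity and the subgroup genuinely depends on $x$.
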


\begin{proof}

Referring to the group operation as addition, and applying the notation of Remark \ref{biper}: 
\[ \left(\hat{\tau_G}\left(y\right)\right)_i = y_{i+1}-y_i. \]

The topological dynamical system $\left(G^{\mathbb{N}_0},\tau_G,\sigma,\right)$ is isomorphic to $\left(G^{\mathbb{N}_0},\sigma,\hat{\tau_G} \right)$ by the map $\left(x_0,x_1,x_2,\dots\right)\mapsto \left(\left(\tau\left(x\right)\right)_0,\left(\tau\left(x\right)\right)_1,\left(\tau\left(x\right)\right)_2,\dots\right)$. Denote the push-forward of $\mu$ through this map by $\hat{\mu}$.\\

What we need to prove is equivalent to substituting in the statement of the proposition $\mu$ with $\hat{\mu}$, $\mu_{x}^{\tau_G^{-1}\mathcal{B}}$ with $\hat{\mu}^{\sigma^{-1}\mathcal{B}}_x$, $\sigma$ and $\tau_G$ with $\hat{\tau_G}$ and $\sigma$ respectively, and $K_{\mu}$ with $\hat{K}_{\hat{\mu}}$.\\

By the ergodicity of $\hat{\mu}$ relative to $\hat{\tau_G}^2$, the points in $G^{{\mathbb{N}_0}}$ are $\hat{\mu}$-almost-surely generic relative to $\hat{\tau_G}^2$. For such an $x\in G^{\mathbb{N}_0}$, consider all such $y\in G^{\mathbb{N}_0}$ that also coincide with $x$ at all indices $i\geq1$. Denote by $\hat{K}_{\hat{\mu},x}$ the set of all values $y_{0}-x_{0}$.
\\

By the associativity of $G$, the map $x\mapsto \hat{K}_{\hat{\mu},x}$ satisfies $  \hat{K}_{\hat{\mu},\hat{\tau_G}\left(x\right)} = -\hat{K}_{\hat{\mu},x}$ and is thus $\hat{\tau_G}^2$-invariant. The ergodicity of $\hat{\mu}$ relative to $\hat{\tau_G}^2$ implies that there exists $\hat{K}_{\hat{\mu}}\subseteq G$ so that $\hat{K}_{\hat{\mu},x} = \hat{K}_{\hat{\mu}}$  $\hat{\mu}$-almost-surely. In particular, $\hat{K}_{\hat{\mu}}$ must then be closed under the group addition and hence it is a subgroup.

\end{proof}

We do not know whether Prop.  \ref{HMM_gen1} remains true if the ergodicity relative to $\sigma^2$ is replaced with ergodicity just relative to $\sigma$. However, the current formulation suffices to draw the following conclusion.

\begin{corollary}
\label{hmm_gen2}
    Let $p>0$ be a prime integer, and let $\mu$ be an ergodic shift-invariant probability measure on $\left( \mathbb{Z}/p\mathbb{Z} \right)^{\mathbb{N}_0}$. If  $-\int \log \mu_x^{\tau_{\mathbb{Z}/p\mathbb{Z}}^{-1}\mathcal{B}}\left(  \left[  x \right]_{\tau^{-1}_{\mathbb{Z}/p\mathbb{Z}}\mathcal{B}} \right)\,d\mu > 0$ then $\mu$ is the uniform measure.
\end{corollary}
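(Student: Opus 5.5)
We need to prove Corollary \ref{hmm_gen2}: for an ergodic shift-invariant $\mu$ on $(\mathbb{Z}/p\mathbb{Z})^{\mathbb{N}_0}$ with positive conditional entropy of $x$ given $\tau^{-1}\mathcal{B}$, $\mu$ is uniform. The plan is to derive from Proposition \ref{HMM_gen1} that $K_\mu=\mathbb{Z}/p\mathbb{Z}$, and then to show that this forces uniformity.

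The first step is to reduce to the hypothesis of Proposition \ref{HMM_gen1}, namely ergodicity relative to $\sigma^2$. Since $\mu$ is $\sigma$-ergodic, its ergodic decomposition relative to $\sigma^2$ has at most two components, $\mu=\frac12(\mu_1+\mu_2)$ with $\sigma_*\mu_1=\mu_2$, and each $\mu_j$ is $\sigma^2$-ergodic. If $\mu$ itself is $\sigma^2$-ergodic we apply Proposition \ref{HMM_gen1} directly. In general, we would rather apply to each $\mu_j$ the $\sigma^2$-version of the argument in the proof of Proposition \ref{HMM_gen1}. That proof uses only genericity for $\hat\tau_G^2$ and the relation $\hat K_{\hat\tau(x)}=-\hat K_x$, so it should adapt to components that are not individually $\sigma$-invariant. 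Concretely, we would prove that $x\mapsto K_{\mu_j,x}$ is almost surely constant and closed under addition, hence a subgroup $K_j$.

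Because $p$ is prime, each $K_j$ is either trivial or all of $\mathbb{Z}/p\mathbb{Z}$. The conditional measure $\mu_x^{\tau^{-1}\mathcal{B}}$ is the uniform measure on a coset of $K_\mu$. Hence the quantity $-\int\log\mu_x^{\tau^{-1}\mathcal{B}}([x])\,d\mu$ equals $\log|K_\mu|$, or an average of $\log|K_j|$ in the decomposed case. This quantity is positive by assumption, so some $K_j$ (and then, using $\sigma_*\mu_1=\mu_2$, both) equals the whole group. Thus, conditionally on $\tau(x)$, the coordinate $x_0$ is uniformly distributed on $\mathbb{Z}/p\mathbb{Z}$, independently of $\tau(x)$.

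The main step is the uniformity. The pair $(x_0,\tau(x))$ determines $x$, since $x_{i+1}=\tau(x)_i-x_i$. Full conditional uniformity means that $\mu$ is invariant under the map $x\mapsto x+(g,-g,g,-g,\dots)$ for every $g$. Translating this invariance by shift-invariance, $\mu$ is also invariant under adding $(-g,g,-g,\dots)$. To get invariance under arbitrary translations, I would iterate. Apply the same reasoning to $\tau^n$ in place of $\tau$: the conditional entropy of $x$ given $\tau^{-n}\mathcal{B}$ is at least that given $\tau^{-1}\mathcal{B}$, and $\tau^n$ restricted to $p$-adic powers is again a bi-permutative linear automaton. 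Indeed $\tau^{p^k}(x)_i=x_i+x_{i+p^k}$, which gives invariance under periodic translations of period $2p^k$. The translations by $(2p^k)$-periodic vectors are dense in $(\mathbb{Z}/p\mathbb{Z})^{\mathbb{N}_0}$ as $k\to\infty$. Hence $\mu$ is invariant under all translations, and so it is Haar, i.e.\ uniform.

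The obstacle I expect is the step of verifying that the positivity hypothesis propagates to the $\tau^{p^k}$ level. This requires $K$ for $\tau^{p^k}$ to be a subgroup of $(\mathbb{Z}/p\mathbb{Z})^{p^k}$ containing the previous one. If that fails, I would instead use the Proposition \ref{HMM} route: rewrite everything via the isomorphism of Remark \ref{biper} so that $\hat\mu$ is $\hat\tau$-invariant with positive $\sigma$-entropy, and invoke Proposition \ref{HMM}.
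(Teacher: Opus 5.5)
Your proposal cannot be completed, and the reason is not technical. Without $\tau$-invariance of $\mu$ the statement is false, so the step you flagged as the expected obstacle really fails.

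For $p=2$, the measure $\mu=\frac12\bigl(\delta_{(0,1,0,1,\dots)}+\delta_{(1,0,1,0,\dots)}\bigr)$ is shift-invariant and ergodic. Both points are sent by $\tau$ to $(1,1,1,\dots)$, so $\mu_x^{\tau^{-1}\mathcal{B}}(\{x\})=\frac12$ and the integral is $\log 2>0$, yet $\mu$ is not uniform. For a mixing counterexample with any prime $p$, let $x_0$ be uniform and let $s_i:=x_i+x_{i+1}$ be i.i.d.\ with a fully supported non-uniform law $\rho$, independent of $x_0$. This is a stationary Markov chain with transition probabilities $\rho(a+b)>0$, hence mixing. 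Since $\tau(x)=(s_0,s_1,\dots)$ is independent of $x_0$, the integral equals $\log p$, and still $\mu$ is not uniform.

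This $\mu$ is invariant under adding $(g,-g,g,\dots)$, i.e.\ under $\ker\tau$, exactly as your argument predicts. It is not invariant under $\ker\tau^{p^k}$ for $k\ge 1$: for $p=2$, adding $(1,0,1,0,\dots)\in\ker\tau^2$ replaces the law Bernoulli$(r)$ of the $s_i$ by Bernoulli$(1-r)$. Positivity or monotonicity of the conditional entropy given $\tau^{-p^k}\mathcal{B}$ gives no reason for the conditional law to be uniform on the whole $p^{p^k}$-point fiber, so your density argument has nothing to work with. Your fallback fails for a related reason: Proposition \ref{HMM} needs a measure invariant under both maps, and $\hat\mu$ is $\sigma$-invariant only when $\mu$ is $\tau$-invariant.

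Your entropy step has a second, independent flaw. When $\mu$ is not $\sigma^2$-ergodic, $-\int\log\mu_x^{\tau^{-1}\mathcal{B}}(\{x\})\,d\mu$ is not an average of the $\log|K_j|$. The $\sigma^2$-ergodic decomposition need not be $\tau^{-1}\mathcal{B}$-measurable, so a single $\tau$-fiber can carry mass from both components. In the periodic example the components are the two point masses with $K_1=K_2=\{0\}$, yet the integral is $\log 2$.

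The paper's own proof contains the same two gaps:
\begin{itemize}
\item it identifies $\log C$ with the integral;
\item it passes from $C=p$ to the uniformity of $\hat\mu$. But $C=p$ only says that under $\hat\mu$ the coordinate $y_0$ is uniform and independent of $(y_1,y_2,\dots)$, and concluding uniformity would require $\hat\mu$ to be $\sigma$-invariant.
\end{itemize}
The conclusion does hold if $\mu$ is additionally assumed $\tau$-invariant. Then Proposition \ref{HMM_gen}, $\sigma$-ergodicity and the positivity hypothesis give $K_{\mu,x}=\mathbb{Z}/p\mathbb{Z}$ almost surely. Hence $h_\mu(\sigma)=h_\mu(\tau)=\log p$ and $\mu$ is uniform, but that is just Proposition \ref{HMM}.
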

\begin{proof}
    In the notation of the preceding proof, $  \hat{K}_{\hat{\mu},\hat{\tau_G}\left(x\right)} = -\hat{K}_{\hat{\mu},x}$ and thus $x\mapsto \left|\hat{K}_{\hat{\mu},x}\right|$ is $\hat{\tau_G}$-invariant and thus is $\hat{\mu}$-almost-surely equal to some constant $C$ (by the ergodicity of $\mu$ relative to $\sigma$). So 
    \[ \log C = \int \log C\,d\hat{\mu}  =-\int \log \mu_x^{\tau_{\mathbb{Z}/p\mathbb{Z}}^{-1}\mathcal{B}}\left(  \left[  x \right]_{\tau^{-1}_{\mathbb{Z}/p\mathbb{Z}}\mathcal{B}} \right)\,d\mu > 0,
    \]
    and hence $C>1$.\\
    
    Applying ergodic decomposition relative to $\sigma^2$, we know by Prop. \ref{HMM_gen1} that $C$ must be the size of some subgroup of  $\mathbb{Z}/p\mathbb{Z}$, and since $p$ is prime we deduce that $C=p$. Hence $\hat{\mu}$ is the uniform measure on $\left( \mathbb{Z}/p\mathbb{Z} \right)^{\mathbb{N}_0}$ and hence also $\mu$ is.
    \

\end{proof}

Prop. \ref{HMM} follows from Prop. \ref{hmm_gen2}., because in Prop. \ref{HMM}  $\mu$ is also $\tau_G$-invariant and thus $-\int \log \mu_x^{\tau_{\mathbb{Z}/p\mathbb{Z}}^{-1}\mathcal{B}}\left(  \left[  x \right]_{\tau^{-1}_{\mathbb{Z}/p\mathbb{Z}}\mathcal{B}} \right)\,d\mu = h_\mu\left(\tau_G\right)=h_\mu\left(\sigma\right)>0$.\\

For $\mu$ that  is also $\tau$-invariant, Prop. \ref{HMM_gen1} will be generalized to not contain any ergodicity assumption. This is Theorem \ref{HMM_gen}.

\section{A Basic Thereom for Bi-Permutative Cellular Automata}
\label{basic_theorem_sect}

This section (as well as the proof of Theorem \ref{group_th}) is based on the approach in which W. Parry proved Rudolph's theorem (cf. \cite{key-2}). Some of its presentation here is taken from the survey \cite{key-6}.\\

\subsection{Invariance of Conditional Probabilities for Certain Commuting Maps}

Let $\left(X,\mathcal{B},\nu\right)$ be a Standard Borel probability
space (i.e. standard Borel space with a probability measure), and
$S:X\rightarrow X$ be a measurable map that preserves $\nu$.\\

Any measurable function $w:X\rightarrow\left[0,\infty\right]$ induces
a positive (maybe infinite) measure $\nu_{w}$ defined by setting
$d\nu_{w}=w\,d\nu$. $\nu_{w}$ can be pushed-forward through $S$
to obtain a new measure on $X$ which is also absolutely continuous with respect to $\nu$, and let us denote its Radon-Nikodym derivative
by $L_{S}w$ ($L_{S}$ is called the transfer operator of $S$). One
may verify that
\[
\left(L_{S}w\right)\left(S\left(x\right)\right)=\int w\left(y\right)\,d\nu_{x}^{S^{-1}\mathcal{B}}\left(y\right),
\]

Notice that in particular $L_{S}\left(w\circ S\right) \left(S\left(x\right)\right)=w \left(S\left(x\right)\right)$ and hence

\begin{equation}
\label{*}
    L_{S}\left(w\circ S\right)=w.
\end{equation}

If $S$ is countable-to-one then $\left(L_{S}w\right)\left(x\right)=\underset{y\in S^{-1}\left(\{x\}\right)}{\sum}\nu_{y}^{S^{-1}\mathcal{B}}\left(\left\{ y\right\} \right)\cdot w\left(y\right)$
(under the convention $0\cdot\infty=0$).\\

Parry proved the following Lemma. Its proof is included for the convenience of the reader.\\

\begin{lemma}
\label{parry_lemma}
    Let $\left(X,\mathcal{B},\nu\right)$ be a Standard
Borel probability space, and $S:X\rightarrow X$ a countable-to-one
measurable map that preserves $\nu$. If $T:X\rightarrow X$ is another
measurable map that preserves $\nu$, commutes with $S$ and for $\nu$-almost-every
$x$ the map $T|_{S^{-1}\left(\{x\}\right)}:S^{-1}\left(\left\{ x\right\} \right)\rightarrow S^{-1}\left(\left\{ T\left(x\right)\right\} \right)$
is bijective, then $\nu_{T\left(x\right)}^{S^{-1}\mathcal{B}}\left(\left\{ T\left(x\right)\right\} \right)=\nu_{x}^{S^{-1}\mathcal{B}}\left(\left\{ x\right\} \right)$
and $\nu_{S\left(x\right)}^{T^{-1}\mathcal{B}}\left(\left\{ S\left(x\right)\right\} \right)=\nu_{x}^{T^{-1}\mathcal{B}}\left(\left\{ x\right\} \right)$ $\nu$-almost-surely.\\

\end{lemma}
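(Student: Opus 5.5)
The plan is to express both conditional-probability functions through the transfer operators $L_S$ and $L_T$, and then use commutativity together with the bijectivity hypothesis to move these operators past $T$ and $S$.

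\textbf{First identity.} Set $f(x)=\nu_x^{S^{-1}\mathcal{B}}(\{x\})$. Since $S$ is countable-to-one, for any measurable $w\geq 0$,
\[
(L_S w)(x)=\sum_{y\in S^{-1}(\{x\})} f(y)\,w(y).
\]
I will compute $\int_{T^{-1}(A)} f\,d\nu$ for an arbitrary measurable $A$ in two ways.

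\emph{Direct route.} Take $w=1_{T^{-1}A}$. Because $S$ preserves $\nu$ and $L_S w$ is the Radon--Nikodym derivative of $S_*(w\,d\nu)$, we have $\int_{T^{-1}A} f\,d\nu=\int L_S(f\cdot 1_{T^{-1}A})\,d\nu$.

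\emph{Via uniqueness of disintegration.} By bijectivity, the fibre $S^{-1}(\{x\})$ is carried onto $S^{-1}(\{Tx\})$ by $T$. I would then show that the function $g(y)=f(Ty)$ also represents the conditional measure of $\nu$ on the fibres of $S$. Concretely, the weights $y\mapsto f(Ty)$ on $S^{-1}(\{x\})$ sum to $1$, since they are a bijective relabelling of the weights on $S^{-1}(\{Tx\})$. It remains to verify that they disintegrate $\nu$ correctly, i.e.
\[
\int_{S^{-1}B} h\,d\nu=\int_B \sum_{y\in S^{-1}x} g(y)\,h(y)\,d\nu(x)
\quad\text{for all } B \text{ and } h.
\]

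\emph{Expected obstacle.} This verification is where I expect the difficulty, because $T$ need not be invertible, so one cannot simply substitute $x\mapsto Tx$. My approach is to test the identity only on functions of the form $h=k\circ T$, using $T$-invariance of $\nu$ and $ST=TS$. This gives
\[
\int_{S^{-1}B} f\circ T\cdots
\]
and yields the equality of the two weightings after pulling back by $T$. From this I conclude that $f\circ T$ and $f$ agree as conditional weights modulo the $\sigma$-algebra $T^{-1}\mathcal{B}\vee S^{-1}\mathcal{B}$. On each $S$-fibre, $T^{-1}\mathcal{B}$ separates points, by bijectivity, so this suffices for atoms.

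\emph{Alternative route.} A cleaner way, and the one I would actually try first, is entropy/Jensen-free. Compare $\int\phi(f)\,d\nu$ with $\int\phi(f\circ T)\,d\nu$ for a strictly concave $\phi$. Both integrals are equal by $T$-invariance. On the other hand, $\sum_{y\in S^{-1}x}f(y)\log f(Ty)\le\sum f(y)\log f(y)$ by Gibbs' inequality, since both sets of weights are probability vectors on the fibre. Equality of the integrals then forces $f\circ T=f$ almost surely, provided $\int -\log f\,d\nu<\infty$. In the present setting this finiteness holds automatically whenever the fibres are finite, and in general I would truncate.

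\textbf{Second identity.} This follows by the symmetric argument with the roles of $S$ and $T$ exchanged, the bijectivity now being used to identify $T$-fibres. Each $y\in S^{-1}(\{x\})$ corresponds to exactly one element of $S^{-1}(\{Tx\})$, which shows that $S$ maps $T^{-1}(\{x\})$ injectively into $T^{-1}(\{Sx\})$. The same Gibbs-inequality comparison then gives $\nu_{Sx}^{T^{-1}\mathcal{B}}(\{Sx\})=\nu_x^{T^{-1}\mathcal{B}}(\{x\})$.
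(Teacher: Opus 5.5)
Your route to the first identity is sound (up to an integrability point) and differs from the paper's, but the second identity is not proved.

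\textbf{The second identity.} Your ``symmetric argument with the roles of $S$ and $T$ exchanged'' presupposes that $T$ is countable-to-one, and also that $\int-\log\nu_x^{T^{-1}\mathcal{B}}(\{x\})\,d\nu(x)<\infty$. The lemma assumes countable fibres only for $S$. When $T$ has uncountable fibres, several things fail:
\begin{itemize}
\item $\nu_x^{T^{-1}\mathcal{B}}$ need not be purely atomic;
\item the weights $y\mapsto\nu_y^{T^{-1}\mathcal{B}}(\{y\})$ need not sum to $1$ on $T$-fibres;
\item their logarithms may be $-\infty$ on a set of positive measure.
\end{itemize}
So there is no Gibbs comparison to run.

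Concretely, take $X=Y\times Z$, $S=S_0\times\mathrm{id}$ and $T=\mathrm{id}\times T_0$. Let $S_0$ be countable-to-one and $T_0$ the shift on $[0,1]^{\mathbb{N}_0}$ with product Lebesgue measure. All hypotheses hold, and $\nu_x^{T^{-1}\mathcal{B}}(\{x\})=0$ for $\nu$-a.e.\ $x$. The fact that $S$ maps $T^{-1}(\{x\})$ injectively (indeed bijectively) onto $T^{-1}(\{S(x)\})$ does not repair this.

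The missing ingredient is the identity on which the paper's proof is built. Since $TS=ST$, compute $\nu_x^{(TS)^{-1}\mathcal{B}}(\{x\})=\nu_x^{(ST)^{-1}\mathcal{B}}(\{x\})$ by the chain rule, once through $S^{-1}\mathcal{B}$ and once through $T^{-1}\mathcal{B}$. This gives
\[
\nu_x^{S^{-1}\mathcal{B}}(\{x\})\,\nu_{S(x)}^{T^{-1}\mathcal{B}}(\{S(x)\})=\nu_x^{T^{-1}\mathcal{B}}(\{x\})\,\nu_{T(x)}^{S^{-1}\mathcal{B}}(\{T(x)\}).
\]
Once you have $f\circ T=f$, note that $f>0$ $\nu$-a.e., since $\nu(\{f=0\})=\int L_S(1_{\{f=0\}})\,d\nu=0$. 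Cancelling $f$ then gives the second identity with no assumption on the fibres of $T$.

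\textbf{The first identity.} The paper goes the other way round. It uses the display to show that $1/\nu_{(\cdot)}^{T^{-1}\mathcal{B}}(\{\cdot\})$ is fixed by $L_S$, deduces the second identity, and then reads the first off the display. You instead compare, on each $S$-fibre, the two probability vectors $(f(y))$ and $(f(T(y)))$; the latter is a probability vector exactly because of the bijectivity hypothesis. You then use $T$-invariance to equate integrated cross-entropy with integrated entropy. Your comparison lives on the $S$-fibres only and never divides by $\nu_x^{T^{-1}\mathcal{B}}(\{x\})$, as the paper's computation does.

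Two corrections are needed.

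First, $\int-\log f\,d\nu=H_\nu(\mathcal{B}\,|\,S^{-1}\mathcal{B})$ is finite when the fibres have bounded size (e.g.\ $|\Lambda|$ for $\sigma$), not merely when they are finite. Also, ``truncate'' is not an argument: replacing $\log$ by $\max(\log,-M)$ destroys the fibrewise Gibbs inequality. A quadratic comparison avoids integrability altogether:
\[
\int\sum_{y\in S^{-1}(\{x\})}\bigl(f(y)-f(T(y))\bigr)^{2}\,d\nu(x)=\int f\,d\nu-2\int f\circ T\,d\nu+\int f\,d\nu=0.
\]
Both outer terms equal $\int L_Sf\,d\nu$; for the last one, relabel $S^{-1}(\{x\})$ onto $S^{-1}(\{T(x)\})$ via $T$ and use $T$-invariance. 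The middle term is $2\int L_S(f\circ T)\,d\nu$. Hence $f(T(y))=f(y)$ on a.e.\ fibre, and $\nu(\{f\circ T\neq f\})=\int L_S(1_{\{f\circ T\neq f\}})\,d\nu=0$. With this and the display above you get the lemma in full generality.

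Second, discard the ``uniqueness of disintegration'' route. Testing on $h=k\circ T$ with $B\in T^{-1}\mathcal{B}$ only reproduces the $T$-pull-back of the original disintegration. For general $B$, the required identity $L_S(k\circ T)=(L_Sk)\circ T$ is equivalent to $f\circ T=f$ itself, so that route is circular.
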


\begin{proof}

The commutation relation implies that $\nu_{x}^{\left(TS\right)^{-1}\mathcal{B}}\left(\left\{ x\right\} \right)=\nu_{x}^{\left(ST\right)^{-1}\mathcal{B}}\left(\left\{ x\right\} \right)$
and hence the identity 

\begin{equation}    
\label{**}
\nu_{x}^{S^{-1}\mathcal{B}}\left(\left\{ x\right\} \right)\cdot\nu_{S\left(x\right)}^{T^{-1}\mathcal{B}}\left(\left\{ S\left(x\right)\right\} \right)=\nu_{x}^{T^{-1}\mathcal{B}}\left(\left\{ x\right\} \right)\cdot\nu_{T\left(x\right)}^{S^{-1}\mathcal{B}}\left(\left\{ T\left(x\right)\right\} \right).
\end{equation}

Now $\left(L_{S}\frac{1}{\nu_{S\left(\cdot\right)}^{T^{-1}\mathcal{B}}\left(\left\{ S\left(\cdot\right)\right\} \right)}\right)\left(x\right)=\underset{y\in S^{-1}\left(\{S\left(x\right)\}\right)}{\sum}\frac{\nu_{y}^{S^{-1}\mathcal{B}}\left(\left\{ y\right\} \right)}{\nu_{y}^{T^{-1}\mathcal{B}}\left(\left\{ y\right\} \right)}$,
so by \eqref{**} this is equal to 
\[
\underset{y\in S^{-1}\left(\{S\left(x\right)\}\right)}{\sum}\frac{\nu_{T\left(y\right)}^{S^{-1}\mathcal{B}}\left(\left\{ T\left(y\right)\right\} \right)}{\nu_{x}^{T^{-1}\mathcal{B}}\left(\left\{ x\right\} \right)}=\frac{\underset{y\in S^{-1}\left(\{S\left(x\right)\}\right)}{\sum}\nu_{T\left(y\right)}^{S^{-1}\mathcal{B}}\left(\left\{ T\left(y\right)\right\} \right)}{\nu_{S\left(x\right)}^{T^{-1}\mathcal{B}}\left(\left\{ S\left(x\right)\right\} \right)}=\frac{1}{\nu_{S\left(x\right)}^{T^{-1}\mathcal{B}}\left(\left\{ S\left(x\right)\right\} \right)},
\]

where in the last equality we applied the bijectivity assumption.\\

Hence the function $\frac{1}{\nu_{S\left(x\right)}^{T^{-1}\mathcal{B}}\left(\left\{ S\left(x\right)\right\} \right)}$ is fixed by $L_{S}$. But then \eqref{*} implies that $\frac{1}{\nu_{S\left(x\right)}^{T^{-1}\mathcal{B}}\left(\left\{ S\left(x\right)\right\} \right)}=\frac{1}{\nu_{x}^{T^{-1}\mathcal{B}}\left(\left\{ x\right\} \right)}$. Hence $\nu_{S\left(x\right)}^{T^{-1}\mathcal{B}}\left(\left\{ S\left(x\right)\right\} \right)=\nu_{x}^{T^{-1}\mathcal{B}}\left(\left\{ x\right\} \right)$ and by \eqref{**} also $\nu_{T\left(x\right)}^{S^{-1}\mathcal{B}}\left(\left\{ T\left(x\right)\right\} \right)=\nu_{x}^{S^{-1}\mathcal{B}}\left(\left\{ x\right\} \right)$.
\\

\end{proof}

\begin{remark}
\label{parry_remark}

In fact, the proof will still hold if the bijectivity requirement on  $T|_{S^{-1}x}:S^{-1}\left(\left\{ x\right\} \right)\rightarrow S^{-1}\left(\left\{ T\left(x\right)\right\} \right)$ is loosened up to $\nu$-almost-sure bijectivity of the restiction of this map from $\left\{ y\in S^{-1}\left(\left\{ x\right\} \right)\,:\,\nu_{y}^{S^{-1}\mathcal{B}}\left(\left\{ y\right\} \right)>0\right\} $ to $\left\{ y\in S^{-1}\left(\left\{ T\left(x\right)\right\} \right)\,:\,\nu_{y}^{S^{-1}\mathcal{B}}\left(\left\{ y\right\} \right)>0\right\}$.\\

\end{remark}

\subsection{The Theorem}

We begin with two lemmas.\\

\begin{lemma}
\label{pinsker_lemma1}
Assume $T:X \rightarrow X$ is a measure preserving map
for a standard Borel probability space $\left(X,\mathcal{B},\nu\right)$, and that there
exists a finite partition $\xi$ (or countable of finite entropy) for which $\vee_{i=0}^\infty T^{-i}\xi\stackrel{\mod\nu}{=}\mathcal{B}$.
Then every $T$-invariant sub-$\sigma$-algebra $\mathcal{A}$ - i.e.
$T^{-1}\mathcal{A}=\mathcal{A}$ - is contained in the Pinsker $\sigma$-algebra
of the system. \end{lemma}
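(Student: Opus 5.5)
Fix a $T$-invariant sub-$\sigma$-algebra $\mathcal{A}$ with $T^{-1}\mathcal{A}=\mathcal{A}$. The goal is $h_\nu\left(T,\eta\right)=0$ for every finite partition $\eta\subseteq\mathcal{A}$, since the Pinsker $\sigma$-algebra is generated by exactly these zero-entropy partitions. I will use two standard facts. First, the future formula
\[
h_\nu\left(T,\eta\right)=H_\nu\left(\eta \,\middle|\, \textstyle\bigvee_{i=1}^{\infty}T^{-i}\eta\right).
\]
Second, $H(\eta\mid\mathcal{C})=0$ whenever $\eta\subseteq\mathcal{C}$ (mod $\nu$). So it suffices to show that $\eta$ is measurable with respect to $\bigvee_{i\ge1}T^{-i}\eta$, or at least with respect to a larger $\sigma$-algebra that still lets the entropy be controlled.

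\textbf{Step 1.} The invariance gives $\mathcal{A}=T^{-1}\mathcal{A}$. Since $\mathcal{A}\subseteq\mathcal{B}=\bigvee_{i\ge0}T^{-i}\xi$, we get
\[
\eta\subseteq\mathcal{A}=T^{-1}\mathcal{A}\subseteq T^{-1}\mathcal{B}=\textstyle\bigvee_{i\ge1}T^{-i}\xi .
\]
Hence $H(\eta\mid T^{-1}\mathcal{B})=0$. Iterating gives $\eta\subseteq T^{-n}\mathcal{B}$ for every $n$, so $\eta\subseteq\bigcap_n T^{-n}\mathcal{B}$, the tail of the generator.

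\textbf{Step 2.} I will show that $h_\nu(T,\eta)\le h_\nu(T,\xi\vee\eta)-h_\nu(T,\xi)$, or argue directly from the conditional formula. Concretely, $h(T,\eta\vee\xi)=h(T,\xi)+h(T,\eta\mid\ldots)$. The cleaner route is the classical one: for finite $\xi$ generating, the Pinsker algebra equals $\bigcap_n T^{-n}\left(\bigvee_{i\ge0}T^{-i}\xi\right)$, i.e. the tail of $\xi$. This is the Rokhlin--Sinai theorem in its one-sided form, and it also holds for countable $\xi$ with $H(\xi)<\infty$. Combined with Step 1 this finishes the proof.

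To stay self-contained I would instead prove only the needed inclusion $\bigcap_n T^{-n}\mathcal{B}\subseteq\Pi$. Take $\eta$ measurable with respect to $T^{-n}\mathcal{B}$ for all $n$. By subadditivity,
\[
H\left(\textstyle\bigvee_{i=0}^{N-1}T^{-i}\eta\right)\le H\left(\textstyle\bigvee_{i=0}^{N-1}T^{-i}\eta \,\middle|\, T^{-N}\mathcal{B}\right)+\ldots
\]
Using $\eta\subseteq T^{-N}\mathcal{B}$, together with $T^{-i}\eta\subseteq T^{-N-i}\mathcal{B}\subseteq T^{-N}\mathcal{B}$, makes the conditional term vanish. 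Then $h(T,\eta)\le h(T,\eta\vee\xi)$. Finally I would compute $h(T,\eta\vee\xi)-h(T,\xi)=H\left(\eta \,\middle|\, \bigvee_{i\ge1}T^{-i}(\eta\vee\xi)\vee\xi\right)=0$, because $\eta\subseteq T^{-1}\mathcal{B}$.

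The main obstacle is turning this into $h(T,\eta)=0$ and not merely "adding $\eta$ costs nothing". I would handle it with the standard Pinsker-formula argument. Since $\eta\subseteq T^{-1}\mathcal{A}$, also $\eta\subseteq T^{-1}\mathcal{A}$ with $\mathcal{A}$ generated by $\bigvee_{i\ge1}T^{-i}\eta$ up to approximation. Approximate $\eta$ by a partition $\eta'\subseteq\bigvee_{i=1}^{m}T^{-i}\eta$ in the Rokhlin metric, which is possible because $\eta\subseteq T^{-1}\mathcal{A}$ and $\mathcal{A}$ is generated by $\eta$'s $T$-iterates after restricting to the $T$-invariant algebra $\mathcal{A}_\eta=\bigvee_{i\in\mathbb{Z}\text{-like}}$. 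This is the delicate point, where finiteness of $H(\xi)$ and the generator hypothesis are used. If it fails, I fall back on citing the one-sided Rokhlin--Sinai identification of the Pinsker algebra with the tail of a finite-entropy generator.
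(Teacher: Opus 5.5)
Your Step 1 is exactly the paper's key observation, $\mathcal{A}=T^{-1}\mathcal{A}\subseteq T^{-1}\mathcal{B}$ (hence $\mathcal{A}\subseteq\bigcap_n T^{-n}\mathcal{B}$). Closing with the Rokhlin--Sinai theorem (the tail of a finite-entropy generator lies in the Pinsker $\sigma$-algebra) is legitimate, so your proof is complete---but only through that citation, and this is where you depart from the paper.

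The paper proves the zero-entropy statement directly: $h_\nu(T\mid\mathcal{A})=H_\nu(\xi\mid T^{-1}\mathcal{B}\vee\mathcal{A})=H_\nu(\xi\mid T^{-1}\mathcal{B})=h_\nu(T)$. The Abramov--Rokhlin formula in the natural extension then gives the factor $\mathcal{A}$ zero entropy. That same computation, with the tail in place of $\mathcal{A}$, is essentially how the half of Rokhlin--Sinai you invoke is proved. So your citation outsources precisely the content of the lemma. Note also that Rokhlin--Sinai is classically stated for invertible maps. Your ``one-sided form'' therefore needs the same natural-extension reduction the paper uses: the one-sided tail lifts to $\bigcap_n\bigvee_{k\ge n}\tilde T^{-k}\tilde\xi$, and the one-sided Pinsker algebra is the trace of the extension's.

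Your self-contained attempt, by contrast, does not work. Approximating $\eta$ by partitions measurable with respect to $\bigvee_{i=1}^{m}T^{-i}\eta$ is possible exactly when $\eta\subseteq\bigvee_{i\ge1}T^{-i}\eta$ mod $\nu$. That is the condition $h_\nu(T,\eta)=H_\nu(\eta\mid\bigvee_{i\ge1}T^{-i}\eta)=0$, which is the conclusion, so the step is circular. The relation $\eta\subseteq T^{-1}\mathcal{A}$ says nothing about $\eta$ relative to its own forward iterates, and the information that actually matters, $\mathcal{A}\subseteq T^{-1}\mathcal{B}$, is discarded. Likewise $h(T,\eta\vee\xi)=h(T,\xi)$ is automatic because $\xi$ generates, so it carries no information.

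The chain rule has to be expanded with $\eta$ first. Write $\alpha_{[0,n)}=\bigvee_{i=0}^{n-1}T^{-i}\alpha$ and use $\eta_{[0,n)}\subseteq\mathcal{A}$:
\[
H_\nu\bigl(\eta_{[0,n)}\vee\xi_{[0,n)}\bigr)\ \ge\ H_\nu\bigl(\eta_{[0,n)}\bigr)+H_\nu\bigl(\xi_{[0,n)}\mid\mathcal{A}\bigr)\ \ge\ H_\nu\bigl(\eta_{[0,n)}\bigr)+n\,H_\nu\bigl(\xi\mid T^{-1}\mathcal{B}\vee\mathcal{A}\bigr).
\]
The second inequality applies $T^{-k}\mathcal{A}=\mathcal{A}$ to each term of the chain rule for $H_\nu(\xi_{[0,n)}\mid\mathcal{A})$. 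By Step 1 the last term equals $n\,H_\nu(\xi\mid T^{-1}\mathcal{B})=n\,h_\nu(T)$. Dividing by $n$ and letting $n\to\infty$ gives $h_\nu(T)\ge h_\nu(T,\eta\vee\xi)\ge h_\nu(T,\eta)+h_\nu(T)$, and $h_\nu(T)\le H_\nu(\xi)<\infty$ forces $h_\nu(T,\eta)=0$. This is the paper's argument with the needed half of Abramov--Rokhlin done by hand, and it avoids the natural extension altogether.
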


\begin{proof}
Note that
\[
h_{\nu}\left(T\,|\,\mathcal{A}\right) = H_{\nu}\left(\mathcal{\xi}\,|\,\left(\vee_{i=1}^\infty T^{-i}\xi \right) \vee\mathcal{A}\right)
\]

\[ = H_{\nu}\left(\mathcal{\xi}\,|\,T^{-1}\mathcal{B}\vee T^{-1}\mathcal{A}\right)=H_{\nu}\left(\mathcal{\xi}\,|\,T^{-1}\mathcal{B}\right)=H_{\nu}\left(\mathcal{\xi}\,|\,\vee_{i=1}^\infty T^{-i}\xi \right)=h_{\nu}\left(T\right). \]
The result follows by passing to the natural invertible extension, and applying the Abramov-Rokhlin formula (see also the first paragraph in the proof of Lemma \ref{pinsker_lemma2} - it refers to a different invertible extension but the explanation given there is similar).
\end{proof}

\vspace{0.3cm}

\begin{lemma}
\label{pinsker_lemma2}
Let $\tau : \Lambda^{\mathbb{N}_0} \rightarrow\ \Lambda^{\mathbb{N}_0}$ be a right permutative cellular automaton. If $\mu$ is a Borel probability measure on $\Lambda^{\mathbb{N}_0}$   invariant under $\sigma$ and $\tau$ then the Pinsker $\sigma$-algebra of $\sigma$
is equal to that of $\tau$. 
\end{lemma}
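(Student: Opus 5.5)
We will compare both Pinsker $\sigma$-algebras with the Pinsker $\sigma$-algebra of a single invertible $\mathbb{Z}^2$-type extension on which $\sigma$ and $\tau$ become easier to relate. The first step is to pass to a natural extension. The two-sided shift extension of $(\Lambda^{\mathbb{N}_0},\mu,\sigma)$ is $\Lambda^{\mathbb{Z}}$ with the induced measure $\bar\mu$. The cellular automaton rule $r(x_i,x_{i+1})$ extends to $\bar\tau$ on $\Lambda^{\mathbb{Z}}$, commuting with the two-sided shift $\bar\sigma$. The extension $\bar\mu$ is $\bar\tau$-invariant, since $\tau$-invariance of cylinder measures transfers by $\sigma$-stationarity. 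Right permutativity of $\tau$ means that, given $(\bar\tau x)_{[i,\infty)}$ and $x_i$, all of $x_{[i,\infty)}$ is determined. The Pinsker algebra of a factor is the restriction of the Pinsker algebra of its natural extension. This is the fact alluded to in Lemma \ref{pinsker_lemma1}, and we use the Abramov--Rokhlin formula in that lemma's fashion. So it suffices to work with the pair $(\bar\sigma,\bar\tau)$, being careful about which coordinates generate what.

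The key step is to show that every set in the Pinsker algebra $\Pi(\sigma)$ is $\tau$-zero-entropy. We use the generator given by Remark \ref{biper}: the map $x\mapsto((\tau^i x)_0)_{i\ge 0}$ shows that the time-zero partition $\xi=\{[a]:a\in\Lambda\}$ is a one-sided generator for $\tau$, i.e. $\vee_{i\ge0}\tau^{-i}\xi=\mathcal{B}$. Hence
\[h_\mu(\tau)=H_\mu(\xi\mid \tau^{-1}\mathcal{B}).\]
The partition $\xi$ is also a generator for $\sigma$. Let $\mathcal{A}=\Pi(\sigma)$. Then $\sigma^{-1}\mathcal{A}=\mathcal{A}$ modulo $\mu$, by invertibility of $\sigma$ on its Pinsker algebra. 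Since $\tau$ commutes with $\sigma$, the algebra $\tau^{-1}\mathcal{A}$ is again $\sigma$-invariant with zero $\sigma$-entropy, so $\tau^{-1}\mathcal{A}\subseteq\mathcal{A}$. We then need $h_\mu(\tau|_{\mathcal{A}})=0$.

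We plan to get this from an entropy comparison: for any finite $\mathcal{A}$-measurable partition $P$, we bound $h_\mu(\tau,P)$ by a multiple of $h_\mu(\sigma,P)$. Right permutativity says $\tau^n$ is determined locally, with $(\tau^n x)_{[0,m)}$ depending on $x_{[0,m+n)}$. Approximating $P$ by cylinder partitions on $[0,m)$, the $n$-fold join of $P$ under $\tau$ is refined (up to $\epsilon$) by $\vee_{j<m+n}\sigma^{-j}P'$. This gives
\[h(\tau,P)\le h(\sigma,P')\cdot\limsup (m+n)/n,\]
and the limsup equals $h(\sigma,P')$. We expect this approximation step (controlling $P'$ versus $P$ inside $\mathcal{A}$) to be the main obstacle. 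We would first try the cleaner route: apply Lemma \ref{pinsker_lemma1} to $T=\tau$ with the $\tau$-invariant algebra $\mathcal{A}_\infty=\bigcap_n\tau^{-n}\mathcal{A}$ to place it in $\Pi(\tau)$. We would then show $\mathcal{A}=\mathcal{A}_\infty$ using the generator property.

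The reverse inclusion $\Pi(\tau)\subseteq\Pi(\sigma)$ follows by symmetry via Remark \ref{biper}. The isomorphism $x\mapsto((\tau^ix)_0)_i$ swaps the roles of $\sigma$ and $\tau$, and converts $\tau$ into the right permutative automaton $\hat\tau$. Applying the first inclusion in that model and pulling back gives $\Pi(\tau)\subseteq\Pi(\sigma)$, hence equality.
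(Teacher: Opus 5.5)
Your proposal has a genuine gap at the step that carries the whole lemma, namely $h_\mu(\tau|_{\mathcal{A}})=0$ for $\mathcal{A}=\Pi(\sigma)$. The entropy comparison you sketch does not work. Take $P'$ measurable with respect to $x_{[0,m)}$ but coarser than the full cylinder partition $\vee_{i<m}\sigma^{-i}\xi$. Locality only gives that $\vee_{j<n}\tau^{-j}P'$ is refined by $\vee_{i<m+n}\sigma^{-i}\xi$; in general it is \emph{not} refined by $\vee_{j<m+n}\sigma^{-j}P'$. For example, for $\tau_{\mathbb{Z}/4\mathbb{Z}}$ with $P'$ the partition according to $\lfloor x_0/2\rfloor$, the partition $\tau^{-1}P'$ depends on the carry from the low-order bits of $x_0,x_1$, which no join of $\sigma$-translates of $P'$ determines. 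If you instead take $P'$ to be the full cylinder partition, you only obtain $h(\tau,P)\le h_\mu(\sigma)+\delta$, which says nothing about $\mathcal{A}$. (Also $\limsup (m+n)/n=1$, so the last claim in that step is garbled.)

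Your fallback does not close the gap either. Lemma \ref{pinsker_lemma1} does place $\mathcal{A}_\infty=\bigcap_n\tau^{-n}\mathcal{A}$ inside $\Pi(\tau)$. But $\mathcal{A}=\mathcal{A}_\infty$ is equivalent to $\tau^{-1}\mathcal{A}=\mathcal{A}$, i.e.\ to $\tau$ acting invertibly on the Pinsker factor of $\sigma$. The natural source of that invertibility is $h(\tau|_{\mathcal{A}})=0$, which is exactly what you are trying to prove, and ``the generator property'' does not supply it.

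The missing idea is that right permutativity gives much more than ``$\xi$ generates for both maps''. It gives the exact identity $\vee_{i=0}^{N-1}\sigma^{-i}\xi=\vee_{i=0}^{N-1}\tau^{-i}\xi$ for every $N$, since both are the partition according to $x_0,\dots,x_{N-1}$. Consequently, for every $\sigma$-algebra $\mathcal{C}$ invariant under both maps, the relative entropies $h(\sigma\,|\,\mathcal{C})$ and $h(\tau\,|\,\mathcal{C})$ coincide. So you should compare conditional entropies of the generator given $\mathcal{C}$, not entropies of partitions inside $\mathcal{C}$.

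To have such a $\mathcal{C}$, the paper passes to the $\mathbb{Z}^2$ natural extension $\{x\in\Lambda^{\mathbb{Z}^2}: x_{(\cdot),i+1}=\tau(x_{(\cdot),i})\}$, where $\tau$ also becomes invertible. Your $\Lambda^{\mathbb{Z}}$ extension leaves $\bar\tau$ non-invertible, which is why you only get $\tau^{-1}\mathcal{A}\subseteq\mathcal{A}$. In the $\mathbb{Z}^2$ extension the Pinsker algebra $\mathcal{P}$ of $\sigma_{(1,0)}$ is invariant under both shifts. Hence
$h(\sigma_{(0,1)}\,|\,\mathcal{P})=h(\sigma_{(1,0)}\,|\,\mathcal{P})=h(\sigma_{(1,0)})=h(\sigma_{(0,1)})$.
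The Abramov--Rokhlin formula then forces $h(\sigma_{(0,1)}|_{\mathcal{P}})=0$, i.e.\ $\mathcal{P}\subseteq\mathcal{P}_{\sigma_{(0,1)}}$.

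Your reduction to an extension, the observation $\tau^{-1}\mathcal{A}\subseteq\mathcal{A}$, and the symmetry argument for the reverse inclusion via Remark \ref{biper} are all fine.
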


\begin{proof}

First, let us note that it suffices to prove the required equality in the natural invertible extension of the system, that is in $\left\{ x\in\Lambda^{\mathbb{Z}^2}\,:\,\forall i\in\mathbb{Z}~~  x_{\left(\cdot\right),i+1}=\tau\left(x_{\left(\cdot\right),i}\right)\right\} $. This is because the pre-image of each of  the Pinsker $\sigma$-algebras of the original system through the factor map equals to the collection of sets in the Pinsker $\sigma$-algebra of the extension that are measurable with respect to the factor. So we work at the extension system.
\\

Denote by $\xi$ the partition according to the value of the points at index zero. Now, $\vee_{i=0}^{N-1}\sigma_{\left(1,0\right)}^{-i}\xi = \vee_{i=0}^{N-1}\sigma_{\left(0,1\right)}^{-i}\xi$ by the right permutativity of $\tau$, and hence $h_\mu\left(\sigma_{\left(1,0\right)}\,|\,\mathcal{A}\right) = h_\mu\left(\sigma_{\left(0,1\right)}\,|\,\mathcal{A}\right)$   for every sub-$\sigma$-algebra $\mathcal{A}$ of the Borel $\sigma$-algebra which satisfies $\sigma_{\left(1,0\right)}^{-1}\mathcal{A} = \sigma_{\left(0,1\right)}^{-1}\mathcal{A} = \mathcal{A}$ (mod $\mu$).\\

Denote by $\mathcal{P}_{\sigma_{\left(1,0\right)}}$ and $\mathcal{P}_{\sigma_{\left(0,1\right)}}$ the Pinsker $\sigma$-algebras of the corresponding maps. Not only $\sigma_{\left(1,0\right)}^{-1}\mathcal{P}_{\sigma_{\left(1,0\right)}} = \mathcal{P}_{\sigma_{\left(1,0\right)}}$, but also 
$\sigma_{\left(0,1\right)}^{-1}\mathcal{P}_{\sigma_{\left(1,0\right)}} = \mathcal{P}_{\sigma_{\left(1,0\right)}}$ since $\sigma_{\left(0,1\right)}$ is an automorphism of the system of the map $\sigma_{\left(1,0\right)}$. Hence $h_\mu\left(\sigma_{\left(1,0\right)}\,|\,\mathcal{P}_{\sigma_{\left(1,0\right)}}\right) = h_\mu\left(\sigma_{\left(0,1\right)}\,|\,\mathcal{P}_{\sigma_{\left(1,0\right)}}\right)$. But \[h_\mu\left(\sigma_{\left(1,0\right)}\,|\,\mathcal{P}_{\sigma_{\left(1,0\right)}}\right) = h_\mu\left(\sigma_{\left(1,0\right)}\right) = h_\mu\left(\sigma_{\left(0,1\right)}\right),\]
so $h_\mu\left(\sigma_{\left(0,1\right)}\right) = h_\mu\left(\sigma_{\left(0,1\right)}\,|\,\mathcal{P}_{\sigma_{\left(1,0\right)}}\right)$. By the Abramov-Rokhlin formula this implies that $\mathcal{P}_{\sigma_{\left(1,0\right)}}\subseteq\mathcal{P}_{\sigma_{\left(0,1\right)}}$. The reverse containment is proved similarly.

\end{proof}

\vspace{0.5cm}

We are now in position to prove our improved version of Theorem \ref{pivato1}.

\vspace{0.5cm}

\begin{theorem}
\label{biper_th}

Let $\tau:\Lambda^{{\mathbb{N}_0}}\rightarrow\Lambda^{{\mathbb{N}_0}}$
be a bi-permutative cellular automaton, and let $\mu$ be a shift-invariant
Borel probability measure on $\Lambda^{{\mathbb{N}_0}}$ which is also
invariant relative to $\tau$. Then the function $x \mapsto \mu _x ^{\sigma^{-1}\mathcal{B}}\left(\{x\}\right)$ (where $\mathcal{B}$ is the Borel $\sigma$-algebra of $\Lambda^{{\mathbb{N}_0}}$) is $\tau$-invariant, measurable with respect to the Pinsker $\sigma$-algebras of both maps (which are equal), and - when considered as a probability measure on $\Lambda$  - $\mu _x ^{\sigma^{-1}\mathcal{B}}$ is the uniform probability on a subset $\Lambda_{\mu,x} \subseteq \Lambda$.
\end{theorem}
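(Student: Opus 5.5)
Let $f(x)=\mu_x^{\sigma^{-1}\mathcal{B}}(\{x\})$. The plan is to apply Lemma \ref{parry_lemma} with $S=\sigma$ and $T=\tau$, then derive the uniformity claim from $\tau$-invariance combined with left permutativity.

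\emph{Invariance.} The hypotheses of Lemma \ref{parry_lemma} hold. $\sigma$ is $|\Lambda|$-to-one, so it is countable-to-one, and the two maps commute. For fixed $x$, the fibre $\sigma^{-1}(\{x\})$ is $\{ax : a\in\Lambda\}$. The map $\tau$ sends $ax$ to $r(a,x_0)\tau(x)$, and $a\mapsto r(a,x_0)$ is a bijection by left permutativity. Hence $\tau$ maps $\sigma^{-1}(\{x\})$ bijectively onto $\sigma^{-1}(\{\tau(x)\})$. The lemma then gives $f\circ\tau=f$ $\mu$-a.s., and also that $x\mapsto\mu_x^{\tau^{-1}\mathcal{B}}(\{x\})$ is $\sigma$-invariant.

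\emph{Pinsker measurability.} The partition $\xi$ by $x_0$ generates under $\sigma$. Let $\mathcal{A}$ be the $\sigma$-algebra of $\tau$-invariant sets. Since $\tau$ commutes with $\sigma$, the function $f\circ\sigma$ is also $\tau$-invariant, so $\mathcal{A}$ is $\sigma$-invariant mod $\mu$. I would then show $\sigma^{-1}\mathcal{A}=\mathcal{A}$ mod $\mu$ so that Lemma \ref{pinsker_lemma1} applies. The cleanest route is to run the argument on the $\tau$ side: by Lemma \ref{pinsker_lemma1} applied to $\tau$, using the generator given by Remark \ref{biper}, any $\tau$-invariant sub-$\sigma$-algebra lies in the Pinsker algebra of $\tau$, provided $\tau^{-1}\mathcal{A}=\mathcal{A}$. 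That condition holds for the $\sigma$-algebra of $\tau$-invariant sets, since $\tau^{-1}E=E$ for each such $E$. Lemma \ref{pinsker_lemma2} then identifies the Pinsker algebra of $\tau$ with that of $\sigma$. I expect this step to be the main technical obstacle. One must check that Remark \ref{biper}'s isomorphism really provides a generating partition for $\tau$, namely the time-zero partition, since $x\mapsto((\tau^ix)_0)_i$ is a bijection. One must also handle the mod-$\mu$ issues in ``$\tau^{-1}\mathcal{A}=\mathcal{A}$''.

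\emph{Uniformity.} Set $p_a(x)=f(a\sigma x)$ for $a\in\Lambda$, which is the conditional probability of $x_0=a$ given $\sigma x$. I need to show that all positive $p_a(x)$ are equal. Apply invariance along the fibre: $f(ay)=f(\tau(ay))=f(r(a,y_0)\tau y)$. Iterating gives $f(ay)=f(b_n\tau^n y)$, where $b_n=r_n(a,y_0,\dots,y_{n-1})$ depends bijectively on $a$ by left permutativity. Thus the multiset of values $\{p_a(y)\}_a$ is carried to $\{p_b(\tau^n y)\}_b$ by a permutation. This alone does not force equality, so I would combine it with the second conclusion of Lemma \ref{parry_lemma}, or apply the lemma with $T=\tau$ and $S=\sigma$ replaced by $\hat\tau$-type roles via Remark \ref{biper}. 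The aim is to get that $f$ is also invariant under the $\sigma$-fibre permutations. Concretely, the identity \eqref{**} together with $f\circ\tau=f$ gives $\mu_x^{\tau^{-1}\mathcal{B}}(\{x\})=\mu_{\sigma x}^{\tau^{-1}\mathcal{B}}(\{\sigma x\})$. By symmetry (right permutativity), the $\tau$-fibre of $x$ corresponds bijectively to the $\sigma$-fibre of $x$: fixing $\tau x$ and varying $x_0$ is the same as fixing $\sigma x$ after applying the right-permutative inverse. Comparing the two conditional measures on this common finite set of at most $|\Lambda|$ points should show that $f$ is constant on the support of each $\sigma$-fibre. This yields the uniform probability on $\Lambda_{\mu,x}=\{a: p_a(x)>0\}$.

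If this comparison fails, the fallback is Pivato's argument (Theorem \ref{pivato1}) applied to the ergodic components of the joint action. The Pinsker-measurability established above ensures that these components can be taken measurable with respect to $\sigma^{-1}\mathcal{B}$, so the constant $1/k$ produced on each component is consistent with conditioning on $\sigma^{-1}\mathcal{B}$.
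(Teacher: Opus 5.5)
Your first two steps are the paper's argument. You apply Lemma \ref{parry_lemma} with $S=\sigma$, $T=\tau$, with fibre bijectivity coming from left permutativity. You then apply Lemma \ref{pinsker_lemma1} to $\tau$, with the time-zero generator from Remark \ref{biper}, followed by Lemma \ref{pinsker_lemma2}.

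\textbf{Gap: the uniformity step.} The step does not go through as written. The $\tau$-fibre $\{y:\tau y=\tau x\}$ and the $\sigma$-fibre $\{a\sigma x:a\in\Lambda\}$ are different sets that meet only in $x$. So there is no ``common finite set'' on which to compare $\mu_x^{\sigma^{-1}\mathcal{B}}$ with $\mu_x^{\tau^{-1}\mathcal{B}}$, and ``should show'' is not an argument.

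The relations $f\circ\tau=f$ and $g\circ\sigma=g$, with $g(x)=\mu_x^{\tau^{-1}\mathcal{B}}(\{x\})$, cannot do the job on their own. Take the left permutative automaton $\tau=\mathrm{id}$ (rule $r(a,b)=a$). Every shift-invariant measure is $\tau$-invariant and both relations hold trivially, yet a non-uniform Bernoulli measure has non-uniform conditional measures. So uniformity cannot come from ``$\tau$-invariance combined with left permutativity'' as your plan states. Right permutativity must enter through the entropy comparison of Lemma \ref{pinsker_lemma2}.

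The fallback is not sound either. Theorem \ref{pivato1} conditions on $\sigma_{(1,0)}^{-1}\tilde{\mathcal{B}}$ in the two-dimensional natural extension, which is not the same as $\sigma^{-1}\mathcal{B}$. It also needs $\tau$-ergodicity: components of the joint action need not be $\tau$-ergodic, while $\tau$-ergodic components need not be shift-invariant. Finally, Pinsker measurability of $f$ says nothing by itself about measurability of an ergodic decomposition.

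\textbf{The missing idea.} Uniformity is already contained in your step 2, and the paper's proof concludes in one line from there. The Pinsker $\sigma$-algebra $\mathcal{P}_\sigma$ of $\sigma$ satisfies $\mathcal{P}_\sigma=\sigma^{-1}\mathcal{P}_\sigma\subseteq\sigma^{-1}\mathcal{B}$ mod $\mu$. Hence $f$ is $\sigma^{-1}\mathcal{B}$-measurable, which means $f(x)$ depends only on $\sigma x$.

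Consequently, for $\mu$-a.e. $x$ and $\mu_x^{\sigma^{-1}\mathcal{B}}$-a.e. $y$, one has $\mu_y^{\sigma^{-1}\mathcal{B}}=\mu_x^{\sigma^{-1}\mathcal{B}}$ and $f(y)=f(x)$. Every point $a\sigma x$ of positive conditional mass therefore carries mass $\mu_x^{\sigma^{-1}\mathcal{B}}(\{a\sigma x\})=f(a\sigma x)=f(x)$. So $\mu_x^{\sigma^{-1}\mathcal{B}}$ is uniform on $\Lambda_{\mu,x}=\{a: f(a\sigma x)>0\}$, with $|\Lambda_{\mu,x}|=1/f(x)$.
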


\begin{proof}

    The left permutativity of $\tau$ guarantees that the conditions of Lemma \ref{parry_lemma} are fulfilled for $\sigma, \tau$ in place of $S,T$ respectively. Thus  $x \mapsto \mu _x ^{\sigma^{-1}\mathcal{B}}\left(\{x\}\right)$ is $\tau$-invariant.\\

    By Lemma \ref{pinsker_lemma1}, the function $x \mapsto \mu _x ^{\sigma^{-1}\mathcal{B}}\left(\{x\}\right)$ is measurable relative to the Pinsker $\sigma$-algebra of $\tau$. By this and by the right permutativity of $\tau$, applying Lemma \ref{pinsker_lemma2}, $x \mapsto \mu _x ^{\sigma^{-1}\mathcal{B}}\left(\{x\}\right)$ is also measurable relative to the Pinsker $\sigma$-algebra of $\sigma$ since it is the same.\\

    The last claim follows from the fact that measurability relative the Pinsker $\sigma$-algebra of $\sigma$ implies, in particular, measurability relative to $\sigma^{-1}\mathcal{B}$.
    
\end{proof}

\vspace{0.3cm}

\begin{remark}
\label{biper_th_rem}
    By Remark \ref{biper}, the roles of $\sigma$ and $\tau$ in the consequent of Theorem \ref{biper_th} may be reversed.
\end{remark}

\subsection{The Factor Map $\pi_{\mu} : \Lambda^{\mathbb{N}_0} \rightarrow \left(2^\Lambda\right)^{\mathbb{N}_0}$}

\label{pi_subsect}

We denote $2^{\Lambda} = \{A\,:\, A\subseteq \Lambda \} $.\\

The map $\pi_{\mu}$  that we now define will serve us throughout this paper.\\

\begin{definition}
Given a bi-permutative cellular automaton $\tau:\Lambda^{{\mathbb{N}_0}}\rightarrow\Lambda^{{\mathbb{N}_0}}$
 and a shift-invariant Borel probability measure  $\mu$ on $\Lambda^{\mathbb{N}_0}$ which is also
invariant relative to $\tau$, denote $\Lambda_{\mu ,x}$ as in Theorem \ref{biper_th}. We define  $\pi_{\mu} : \Lambda^{\mathbb{N}_0} \rightarrow \left(2^\Lambda\right)^{\mathbb{N}_0}$ as the shift-equivariant map defined by the requirement $\left( \pi_{\mu} \left( x \right) \right)_0 = \Lambda_{\mu,x}$.
\end{definition}

The measurablity of the map $\pi_\mu$ follows from the measurability of the conditional probability map $x\mapsto \mu_x^{\sigma^{-1}\mathcal{B}}$.\\

We shall later need the following elementary lemma concerning the entropy of  the system $\left(\left(2^{\Lambda}\right)^{{\mathbb{N}_0}},\sigma,\left( \pi_{\mu} \right)_{*}\mu\right)$.\\

\begin{lemma}
\label{zero_ent}
In the setting of Theorem \ref{biper_th}, $h_{\left( \pi_{\mu} \right)_{*}\mu}\left(\sigma\right)=0$ if and only if
$\mu$-almost-surely for all $1\leq k\in{\mathbb{N}_0}$ the subset $\Lambda_{\mu,x}$ is $\sigma^{-k}\mathcal{B}$-measurable.

\end{lemma}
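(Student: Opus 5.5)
Let $\nu=(\pi_\mu)_*\mu$ and let $\eta$ be the partition of $(2^\Lambda)^{\mathbb{N}_0}$ by the zero coordinate. Since $\eta$ generates under $\sigma$, we have
\[
h_\nu(\sigma)=H_\nu\Big(\eta\,\Big|\,\bigvee_{i\ge1}\sigma^{-i}\eta\Big).
\]
Hence $h_\nu(\sigma)=0$ if and only if $\eta\subseteq\bigvee_{i\ge1}\sigma^{-i}\eta$ mod $\nu$. Iterating with $\sigma$-invariance of $\nu$, this is equivalent to $\eta\subseteq\bigvee_{i\ge k}\sigma^{-i}\eta$ mod $\nu$ for every $k\ge1$. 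Pulling back by $\pi_\mu$, let $\mathcal{C}=\pi_\mu^{-1}\big(\bigvee_{i\ge0}\sigma^{-i}\eta\big)$, the $\sigma$-algebra generated by $x\mapsto(\Lambda_{\mu,\sigma^i x})_{i\ge0}$. Then $h_\nu(\sigma)=0$ exactly when, for every $k\ge1$, the map $x\mapsto\Lambda_{\mu,x}$ is $\sigma^{-k}\mathcal{C}$-measurable mod $\mu$.

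For the forward direction, $\mathcal{C}\subseteq\mathcal{B}$, so $\sigma^{-k}\mathcal{C}\subseteq\sigma^{-k}\mathcal{B}$. Zero entropy therefore gives that $\Lambda_{\mu,x}$ is $\sigma^{-k}\mathcal{B}$-measurable for all $k\ge1$.

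For the converse, assume $\Lambda_{\mu,x}$ is $\sigma^{-k}\mathcal{B}$-measurable for all $k$. Then by shift-equivariance each coordinate $\Lambda_{\mu,\sigma^i x}$ is $\sigma^{-(i+k)}\mathcal{B}$-measurable. Consequently $\mathcal{C}$, and hence the function $\Lambda_{\mu,\cdot}$, is measurable with respect to the tail $\bigcap_k\sigma^{-k}\mathcal{B}$ of $(\Lambda^{\mathbb{N}_0},\sigma,\mu)$. The plan is then to show directly that $h_\nu(\sigma)=0$:
\[
h_\nu(\sigma)=\lim_n \frac1n H_\mu\Big(\bigvee_{i<n}\sigma^{-i}\pi_\mu^{-1}\eta\Big)\le \lim_n\frac1n H_\mu\Big(\bigvee_{i<n}\sigma^{-i}\pi_\mu^{-1}\eta\,\Big|\,\sigma^{-n-k}\mathcal{B}\Big)+\frac1n I\Big(\cdots;\sigma^{-n-k}\mathcal{B}\Big).
\]
This route is messy. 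A cleaner one uses the standard fact that a factor generated by a partition measurable with respect to $\bigcap_k\sigma^{-k}\mathcal{B}$ has zero entropy: the tail $\sigma$-algebra of a one-sided system is contained in the Pinsker $\sigma$-algebra, by the Abramov–Rokhlin argument in the natural extension, as in Lemma \ref{pinsker_lemma1}. Since $\mathcal{C}$ lies in the tail, the factor it generates has zero entropy, i.e. $h_\nu(\sigma)=0$.

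The main obstacle is justifying carefully that tail-measurability forces zero entropy. I would argue as follows. Let $\mathcal{A}=\bigvee_{j\ge0}\sigma^{-j}\pi_\mu^{-1}\eta\subseteq\bigcap_k\sigma^{-k}\mathcal{B}$. Then
\[
h_\nu(\sigma)=H_\mu\Big(\pi_\mu^{-1}\eta\,\Big|\,\bigvee_{i\ge1}\sigma^{-i}\pi_\mu^{-1}\eta\Big).
\]
Since $\pi_\mu^{-1}\eta$ is itself $\sigma^{-1}\mathcal{B}$-measurable, one can show $\pi_\mu^{-1}\eta\subseteq\sigma^{-1}\mathcal{A}$ mod $\mu$. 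This step uses that all the information needed is a function of the coordinates $\Lambda_{\mu,\sigma^i x}$, $i\ge1$, and is the delicate point. If it resists, I would fall back on the Pinsker containment: by Theorem \ref{biper_th}, $\Lambda_{\mu,x}$ is already Pinsker-measurable, so $h_\nu(\sigma)=0$ always holds. In that case the content of the lemma reduces to checking that the tail condition is automatic, which is consistent, since Pinsker measurability implies tail measurability.
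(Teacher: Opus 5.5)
Your forward direction, and the ``cleaner'' route you give for the converse, are exactly the paper's proof. Under the hypothesis, the finite partition $\pi_\mu^{-1}\eta$ is measurable with respect to the tail $\mathcal{T}=\bigcap_k\sigma^{-k}\mathcal{B}$. Since $\sigma^{-1}\mathcal{T}=\mathcal{T}$ mod $\mu$ (one inclusion is trivial, the other holds because $\sigma^{-1}$ is injective on the measure algebra), Lemma \ref{pinsker_lemma1} places $\mathcal{T}$ inside the Pinsker $\sigma$-algebra. Hence $h_{(\pi_\mu)_*\mu}(\sigma)=h_\mu(\sigma,\pi_\mu^{-1}\eta)=0$. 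This already disposes of what you call the main obstacle, and the proof is complete at that point.

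Drop the last two paragraphs, because both contain errors.

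\textbf{The ``delicate'' inclusion.} The inclusion $\pi_\mu^{-1}\eta\subseteq\sigma^{-1}\mathcal{A}$ mod $\mu$ is not an intermediate step; it is a restatement of $h_{(\pi_\mu)_*\mu}(\sigma)=0$. The reason you offer for it, that $\pi_\mu^{-1}\eta$ is $\sigma^{-1}\mathcal{B}$-measurable, cannot work. That measurability holds for every $\mu$ in the setting of Theorem \ref{biper_th}, while the entropy of the factor can be positive. The tail hypothesis has to enter through an entropy argument such as the one above.

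\textbf{The fallback.} The fallback is false. Theorem \ref{biper_th} gives Pinsker-measurability of $x\mapsto\mu_x^{\sigma^{-1}\mathcal{B}}(\{x\})=1/|\Lambda_{\mu,x}|$, i.e.\ of the \emph{cardinality} of $\Lambda_{\mu,x}$, not of the set itself. Example \ref{kitchens} is a counterexample. On the component $(\tau_G)_*m_{\mathrm{odd}}$, whose points have the form $(a,a,b,b,c,c,\dots)$, one has $\Lambda_{\mu,x}=\{x_1\}$. This component is a.s.\ recognizable from $(x_2,x_3,\dots)$, and on it $x_1$ is independent of $(x_2,x_3,\dots)$, so $\Lambda_{\mu,x}$ is not $\sigma^{-2}\mathcal{B}$-measurable. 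Hence $h_{(\pi_\mu)_*\mu}(\sigma)>0$ there, as the paper points out, and the lemma is far from vacuous.
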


\begin{proof}
$\left(\rightarrow\right)$ is immediate. As for $\left(\leftarrow\right)$, the assumption says that for any $A\subseteq\Lambda$ the event $\left\{ x\,:\,\Lambda_{\mu,x}=A\right\} $ is a tail event and belongs to the Pinsker $\sigma$-algebra of $\left(\Lambda^{\mathbb{N}_0},\sigma,\mu\right)$.
The partition of $\left(2^{\Lambda}\right)^{{\mathbb{N}_0}}$ with respect
to the value at index zero is composed of events who under $\pi_{\mu}^{-1}$ are equal to the events of the form $\left\{ x:\,\Lambda_{\mu,x}=A\right\}$, and hence the factor $\left(\left(2^{\Lambda}\right)^{{\mathbb{N}_0}},\sigma,\left( \pi_{\mu} \right)_{*}\mu\right)$
of $\left(\Lambda^{\mathbb{N}_0},\sigma,\mu\right)$ is measurable relative to the Pinsker $\sigma$-algerba of  $\left(\Lambda^{\mathbb{N}_0},\sigma,\mu\right)$, so it actually factors
through the Pinsker factor of $\left(\Lambda^{\mathbb{N}_0},\sigma,\mu\right)$, and thus $h_{\left( \pi_{\mu} \right)_{*}\mu}\left(\sigma\right)=0$.
\end{proof}

\section{Group Multiplication Cellular Automata}
\label{group_sect}

As promised, we are now able to generalize Theorem \ref{pivato2} beyond ergodicity for the special case in which $\mu$ is also shift-invariant.\\

\begin{theorem}
\label{group_th}
Let $G$ be a finite group, and let $\mu$ be a shift-invariant Borel probability measure on $G^{\mathbb{N}_0}$ that is also invariant relative to $\tau_G$. Then
$\mu_{x}^{\sigma^{-1}\mathcal{B}}$ ($\mathcal{B}$  being the Borel $\sigma$-algebra of $G^{\mathbb{N}_0}$) as a measure on $G$ is $\mu$-almost-surely
the uniform measure on a right coset of some subgroup $H_{\mu, x}\leq G$. The
subgroup $H_{\mu, x}$ may depend on $x$ but is invariant relative
to $\tau$ and also measurable relative to the Pinsker $\sigma$-algebras of both maps (which are equal).

\end{theorem}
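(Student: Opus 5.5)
The plan is to build on Theorem \ref{biper_th}. It already tells us that $\mu_x^{\sigma^{-1}\mathcal{B}}$, viewed as a measure on $G$ via the value at index $0$, is uniform on some $\Lambda_{\mu,x}\subseteq G$, and that $x\mapsto\mu_x^{\sigma^{-1}\mathcal{B}}(\{x\})$ is $\tau_G$-invariant. I will define the candidate subgroup
\[
H_{\mu,x}:=\Lambda_{\mu,x}\,x_0^{-1}.
\]
Since $\mu$-almost-surely $x_0\in\Lambda_{\mu,x}$, this set contains $e$, and $\Lambda_{\mu,x}=H_{\mu,x}x_0$. So it will remain to show three things: $H_{\mu,x}$ is $\tau_G$-invariant, it is Pinsker-measurable, and it is closed under multiplication.

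First, $\tau_G$-invariance. The map $\tau_G$ sends the $\sigma$-fiber $\{(g,x_1,x_2,\dots):g\in G\}$ of $x$ bijectively onto the $\sigma$-fiber of $\tau_G(x)$, via $g\mapsto gx_1$ at index $0$. This is exactly the bijectivity hypothesis of Lemma \ref{parry_lemma} with $S=\sigma$, $T=\tau_G$. That lemma gives $\mu_{\tau_G y}^{\sigma^{-1}\mathcal{B}}(\{\tau_G y\})=\mu_y^{\sigma^{-1}\mathcal{B}}(\{y\})$ for almost every $y$. I will apply this to the fiber points $y$ of $x$, which is legitimate because the points of positive conditional weight carry full conditional mass. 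It follows that $\Lambda_{\mu,\tau_G x}=\Lambda_{\mu,x}\,x_1$. Since $(\tau_G x)_0=x_0x_1$, we get
\[
H_{\mu,\tau_G x}=\Lambda_{\mu,x}x_1(x_0x_1)^{-1}=\Lambda_{\mu,x}x_0^{-1}=H_{\mu,x}.
\]

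Second, measurability. For each subset $A\subseteq G$, the event $\{H_{\mu,x}=A\}$ is $\tau_G$-invariant mod $\mu$. So the $\sigma$-algebra generated by $x\mapsto H_{\mu,x}$ is a $\tau_G$-invariant sub-$\sigma$-algebra. By Lemma \ref{pinsker_lemma1}, applied to $\tau_G$ with the generating partition by the value at index $0$ (generating for $\tau_G$ by right permutativity), this $\sigma$-algebra lies in the Pinsker $\sigma$-algebra of $\tau_G$. By Lemma \ref{pinsker_lemma2}, that Pinsker $\sigma$-algebra equals the Pinsker $\sigma$-algebra of $\sigma$. In particular $H_{\mu,x}$ is $\sigma^{-1}\mathcal{B}$-measurable, so it is almost-surely constant on the positive-weight points of each $\sigma$-fiber.

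Third, the subgroup property. Fix a typical $x$ and write $\Lambda=\Lambda_{\mu,x}$ and $D=H_{\mu,x}$. By the previous step, for every $a\in\Lambda$ the fiber point $y$ with $y_0=a$ satisfies $D=\Lambda a^{-1}$. Take $h=ua^{-1}$ and $k=va^{-1}$ in $D$ with $u,v,a\in\Lambda$. Using $D=\Lambda v^{-1}$, write $h=wv^{-1}$ with $w\in\Lambda$. Then $hk=wa^{-1}\in\Lambda a^{-1}=D$. So $D$ is a finite set containing $e$ and closed under multiplication, hence a subgroup, and $\Lambda_{\mu,x}=H_{\mu,x}x_0$ is a right coset of it.

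I expect the main obstacle to be the measure-theoretic bookkeeping. The identities from Lemma \ref{parry_lemma} and the $\sigma^{-1}\mathcal{B}$-measurability hold only almost surely. The argument needs them simultaneously at all the finitely many positive-weight points of a single $\sigma$-fiber. I plan to handle this by disintegrating $\mu$ over $\sigma^{-1}\mathcal{B}$: a $\mu$-null set has conditional measure zero in almost every fiber, and every positive-weight atom has weight at least $1/|G|$. Hence almost every fiber has all of its positive-weight points outside any given null set.
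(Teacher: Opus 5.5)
Your proposal is correct and follows essentially the same route as the paper: your $H_{\mu,x}=\Lambda_{\mu,x}x_0^{-1}$ coincides with the paper's set of $g$ for which $(gx_0,x_1,x_2,\dots)$ has the same conditional weight as $x$. As in the paper, $\tau_G$-invariance comes from associativity, Pinsker (hence $\sigma^{-1}\mathcal{B}$-) measurability from Lemmas \ref{pinsker_lemma1} and \ref{pinsker_lemma2}, and closure under multiplication from constancy along the $\sigma$-fiber, with you spelling out the closure computation and the almost-sure fiber bookkeeping that the paper leaves implicit.
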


\begin{proof}

Since $\tau_G$ is bi-permutative, Theorem \ref{biper_th} applies
here. The $\tau$-invariance of $\mu_{x}^{\sigma^{-1}\mathcal{B}}\left(\left\{ x\right\} \right)$
together with the associativity of $G$ imply the $\tau$-invariance
of the set $H_{\mu,x}$ defined $\mu$-almost-surely as the set of elements $g$ of $G$ for
which the point $\left(\left(gx_{0}\right),x_{1},x_{2},\dots\right)$ satisfies $\mu_{x}^{\sigma^{-1}\mathcal{B}}\left(\left\{ \left(\left(gx_{0}\right),x_{1},x_{2},\dots\right)\right\} \right)=\mu_{x}^{\sigma^{-1}\mathcal{B}}\left(\left\{ x\right\} \right)$.\\

In particular, this means that $x\mapsto H_{\mu,x}$ is measurable with
respect to the Pinsker $\sigma$-algebra of $\tau$ and hence of $\sigma$.
So it is $\sigma^{-1}\mathcal{B}$-measurable, and hence $H_{\mu,x}$ as a set is closed under the group multiplication operation and hence is a subgroup of $G$.

\end{proof}

\subsection{A Slight Diversion 2: Conditioning on $\tau_G^{-1}\mathcal{B}$}

We now state and prove a generalization of Prop. \ref{HMM_gen1} when $\mu$ is also $\tau$-invariant. The ergodicity assumption of the that proposition is omitted.

\begin{proposition}
\label{HMM_gen}

Let $G$ be an \underline{Abelian} group, and let $\mu$ be a shift-invariant Borel probability measure on $G^{\mathbb{N}_0}$ that is also invariant relative to $\tau_G$. Then $\mu_{x}^{\tau_G^{-1}\mathcal{B}}$ ($\mathcal{B}$  being the Borel $\sigma$-algebra of $G^{\mathbb{N}_0}$) as a measure on $G$ is $\mu$-almost-surely the uniform measure on a coset of some subgroup $K_{\mu, x}\leq G$. The subgroup $K_{\mu, x}$ may depend on $x$ but is invariant relative to $\sigma$ and also measurable relative to the Pinsker $\sigma$-algebras of both maps.
\end{proposition}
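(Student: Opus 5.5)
We transport the problem through the isomorphism of Remark \ref{biper}, exactly as in the proof of Prop. \ref{HMM_gen1}, and then apply Theorem \ref{group_th} on the transported side. Write the group additively. The map $\Phi:x\mapsto\left(\left(\tau_G\left(x\right)\right)_0,\left(\tau_G\left(x\right)\right)_1,\dots\right)$ is not quite the right map here, so we use the isomorphism $\Psi:x\mapsto\left(\left(\tau_G^{i}\left(x\right)\right)_0\right)_{i=0}^\infty$ of Remark \ref{biper}. It conjugates $\left(G^{\mathbb{N}_0},\sigma,\tau_G\right)$ to $\left(G^{\mathbb{N}_0},\hat{\tau}_G,\sigma\right)$, where $\left(\hat{\tau}_G\left(y\right)\right)_i=y_{i+1}-y_i$. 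Under $\Psi$, the $\sigma$-algebra $\tau_G^{-1}\mathcal{B}$ corresponds to $\sigma^{-1}\mathcal{B}$. Indeed, $\tau_G^{-1}\mathcal{B}$ is generated by the coordinates $(\tau_G^{i}x)_0$ for $i\geq1$. Let $\hat{\mu}=\Psi_*\mu$. It is invariant under $\sigma$ and under $\hat{\tau}_G$, and conditioning $\mu$ on $\tau_G^{-1}\mathcal{B}$ is equivalent to conditioning $\hat{\mu}$ on $\sigma^{-1}\mathcal{B}$. Moreover, the atom of $x$ in $\tau_G^{-1}\mathcal{B}$ is parametrized by $x_0$, and it corresponds to the atom of $\Psi(x)$ parametrized by $\Psi(x)_0=x_0$.

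Next we check that $\hat{\tau}_G$ fits the framework. It is bi-permutative, since $\hat{\tau}_G$ is given by $r(a,b)=b-a$, which is a bijection in each variable. In the Abelian case, $b-a$ is exactly group multiplication twisted by the automorphism $a\mapsto -a$ in the first variable. We cannot cite Theorem \ref{group_th} verbatim, because that theorem concerns the rule $a\cdot b$. Instead we repeat its proof using Theorem \ref{biper_th}, which applies to any bi-permutative rule. Theorem \ref{biper_th} gives the following: $y\mapsto\hat{\mu}^{\sigma^{-1}\mathcal{B}}_y\left(\{y\}\right)$ is $\hat{\tau}_G$-invariant and Pinsker-measurable, and $\hat{\mu}^{\sigma^{-1}\mathcal{B}}_y$ is uniform on a set $\Lambda_{\hat{\mu},y}$. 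Define $\hat{K}_y=\Lambda_{\hat{\mu},y}-y_0$, the set of $g$ with $\hat{\mu}_y^{\sigma^{-1}\mathcal{B}}\left(\{(y_0+g,y_1,\dots)\}\right)=\hat{\mu}_y^{\sigma^{-1}\mathcal{B}}\left(\{y\}\right)$.

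The key computation is that applying $\hat{\tau}_G$ to $(y_0+g,y_1,\dots)$ gives $(y_1-y_0-g,\dots)$. Therefore invariance of the point-mass function yields $\hat{K}_{\hat{\tau}_G(y)}=-\hat{K}_y$, as in the proof of Prop. \ref{HMM_gen1}. This does not yet give invariance of $\hat{K}_y$ itself. However, $y\mapsto\hat{K}_y\cup(-\hat{K}_y)$, and in fact the whole pair $\{\hat{K}_y,-\hat{K}_y\}$, is $\hat{\tau}_G$-invariant, and $\hat{K}_y$ is $\hat{\tau}_G^2$-invariant. By Lemma \ref{pinsker_lemma1} applied to $\hat{\tau}_G^2$, whose Pinsker algebra coincides with that of $\hat{\tau}_G$, $\hat{K}_y$ is measurable with respect to the Pinsker $\sigma$-algebra of $\hat{\tau}_G$. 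By Lemma \ref{pinsker_lemma2}, it is then measurable with respect to the Pinsker $\sigma$-algebra of $\sigma$, hence $\sigma^{-1}\mathcal{B}$-measurable. So $\hat{K}_y$ is constant along the atom of $y$, and hence $\Lambda_{\hat{\mu},y}-y'_0=\hat{K}_y$ for every $y'$ in the atom with $y'_0\in\Lambda_{\hat{\mu},y}$. This means $\hat{K}_y=A-a$ for all $a\in A=\Lambda_{\hat{\mu},y}$, which forces $A-A=A-a$. Consequently $\hat{K}_y$ is closed under subtraction, so it is a subgroup, and $A$ is a coset of it.

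Finally we translate back. Set $K_{\mu,x}=\hat{K}_{\Psi(x)}$. This set is Pinsker-measurable, because $\Psi$ conjugates both Pinsker algebras. Since $K$ is a subgroup, $-K=K$, so $K_{\mu,\sigma(x)}=-K_{\mu,x}=K_{\mu,x}$; this is where $\hat{\tau}_G$ corresponds to $\sigma$, and it gives the claimed $\sigma$-invariance. We expect the main obstacle to be the sign twist $\hat{K}_{\hat{\tau}_G(y)}=-\hat{K}_y$, which breaks the direct analogue of the $\tau$-invariance argument. The fix is to pass through $\hat{\tau}_G^2$ (or the symmetrized set) to obtain Pinsker measurability before knowing that $\hat{K}$ is a subgroup, and to recover full invariance only afterwards from $-K=K$. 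We also need to check, via the Pinsker invariance under powers, that Lemma \ref{pinsker_lemma1} is legitimately applied to $\hat{\tau}_G^2$ with generator $\xi\vee\hat{\tau}_G^{-1}\xi$.
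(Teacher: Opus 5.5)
Your proposal is correct and follows essentially the same route as the paper. Both arguments transport the problem through the conjugacy of Remark \ref{biper} and apply Theorem \ref{biper_th} to obtain $\hat K_{\hat\tau_G(y)}=-\hat K_y$. They then pass through $\hat\tau_G^2$ and Lemmas \ref{pinsker_lemma1} and \ref{pinsker_lemma2} to get $\sigma^{-1}\mathcal{B}$-measurability, hence the subgroup and coset structure, and recover full $\sigma$-invariance from $-K=K$. Your choice of $\Psi(x)=\bigl((\tau_G^{i}(x))_0\bigr)_{i\geq 0}$ is the correct conjugacy: the map displayed in the paper's proof is literally $\tau_G$ itself, which appears to be a typo. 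Your explicit $A-a=K$ argument and the generator $\xi\vee\hat\tau_G^{-1}\xi$ for $\hat\tau_G^2$ supply details the paper leaves implicit.
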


\begin{proof}

Referring to the group operation as addition, and applying the notation of Remark \ref{biper}: 
\[ \left(\hat{\tau_G}\left(y\right)\right)_i = y_{i+1}-y_i. \]

The topological dynamical system $\left(G^{\mathbb{N}_0},\tau _G,\sigma,\right)$ is isomorphic to $\left(G^{\mathbb{N}_0},\sigma,\hat{\tau_G} \right)$ by the map $\left(x_0,x_1,x_2,\dots\right)\mapsto \left(\left(\tau_G\left(x\right)\right)_0,\left(\tau_G\left(x\right)\right)_1,\left(\tau_G\left(x\right)\right)_2,\dots\right)$. Denote the push-forward of $\mu$ through this map by $\hat{\mu}$.\\

Therefore, what we need to prove is equivalent to substituting in the statement of the proposition $\mu$ with $\hat{\mu}$, $\mu_{x}^{\tau_G^{-1}\mathcal{B}}$ with $\hat{\mu}^{\sigma^{-1}\mathcal{B}}_y$, and $\sigma$ and $\tau_G$ with $\hat{\tau_G}$ and $\sigma$ respectively.\\

So we apply Theorem \ref{biper_th} to the system of $\sigma,\hat{\tau_G}$ and $\hat{\mu}$. The $\hat{\tau_G}$-invariance of $\hat{\mu}_{y}^{\sigma^{-1}\mathcal{B}}\left(\left\{ y\right\} \right)$ - together with the associativity of $G$ and the fact it is Abelian - imply that the set $K_{\mu,y}$, defined $\hat{\mu}$-almost-surely as the set of elements $g \in G$ for which the point $\left(\left(gx_{0}\right),x_{1},x_{2},\dots\right)$ satisfies $\hat{\mu}_{y}^{\sigma^{-1}\mathcal{B}}\left(\left\{ \left(\left(gy_{0}\right),y_{1},y_{2},\dots\right)\right\} \right)=\hat{\mu}_{y}^{\sigma^{-1}\mathcal{B}}\left(\left\{ y\right\} \right)$, is transformed to $-K_{\mu,y}$ under $\hat{\tau_G}$.\\

$K_{\mu,y}$ is thus $\hat{\tau_G}^2$-invariant. This means that $y\mapsto K_{\mu,y}$ is measurable with
respect to the Pinsker $\sigma$-algebra of $\tau_G^2$, hence of $\tau_G$ and hence of $\sigma$ (they are all equal).
So it is $\sigma^{-1}\mathcal{B}$-measurable, and hence $K_{\mu,x}$ as a set is closed under the group addition and hence is a subgroup of $G$.\\

It is left to prove that $K_{\mu,y}$ is in fact $\hat{\tau_G}$-invariant, where it was a priori known that $\hat{\mu}$-almost surely $K_{\mu,\hat{\tau}\left(y\right)} = -K_{\mu,y}$. But for a subgroup $-K_{\mu,y}= K_{\mu,y}$.

\end{proof}

\vspace{0.5cm}

\subsection{Back to the Main Thread}

An illustrative and simple well-known example is the following.

\begin{example}

\label{kitchens}

Consider the Ledrappier cellular automaton (the case of the multiplication cellular automaton $\tau_G$ for $G$ which is
the additive group $\mathbb{Z}/2\mathbb{Z}$).\\

A well-known example for $\mu$ that is not the uniform measure is
\[
\mu = \frac{1}{4}m_\text{odd}+ \sigma_* \frac{1}{4}m_\text{odd}+ \frac{1}{4}\left(\tau_G\right)_*m_\text{odd}+ \frac{1}{4}\left(\tau_G\right)_*\sigma_*m_\text{odd},
\]
where $m_{odd}$ is the push-forward of the uniform measure through the map $\left(x_{0},x_{1},x_{2},\dots\right)\mapsto\left(0,x_{0},0,x_{1},0,x_{2},0,\dots\right)$. $\mu$ is obviously shift-invariant. To see that it is $\tau_G$-invariant
we can identify $\left(\mathbb{Z}/2\mathbb{Z}\right)^{{\mathbb{N}_0}}$ with the space of formal power series with coefficients in the field $\mathbb{Z}/2\mathbb{Z}$:
$\left(x_{0},x_{1},x_{2},\dots\right)\mapsto x_{0}+x_{1}t+x_{2}t^{2}+\dots$. Under this identification the effect of $\tau_G$ is just multiplication
by $1+t$, and hence the effect of $\tau_G^{2}$ is multiplication by $\left(1+t\right)^{2}=1+t^{2}$. So $\left(\tau_G\right)_{*}^{2}m_{odd}=m_{odd}$ and $\left(\tau_G\right)_{*}^{2}\sigma_{*}m_{odd}=\sigma_{*}m_{odd}$.\\

In fact, $\mu$ is ergodic relative to the joint action of $\sigma$ and $\tau_G$. Te measures $m_\text{odd}, \sigma_* m_\text{odd},\left(\tau_G\right)_*m_\text{odd},\left(\tau_G\right)_*\sigma_*m_\text{odd}$ are mutually singular, namely there exist disjoint
Borel measurable sets $X_{1},X_{2},X_{3},X_{4}\subseteq\left(\mathbb{Z}/2\mathbb{Z}\right)^{{\mathbb{N}_0}}$ of probability $1$ with respect to the corresponding measures. Consider a Borel measurable set $A\subseteq\left(\mathbb{Z}/2\mathbb{Z}\right)^{{\mathbb{N}_0}}$ invariant under both maps. The disjoint sets $A\cap X_{1},A\cap X_{2},A\cap X_{3},A\cap X_{4}$ are transitively permuted by the action of $\sigma^{-1},\tau_G^{-1}$ and hence
all have equal probability. But invariance of $A$ under $\sigma^{-2}$ implies that the set $A\cap X_{1}$ is of $m_{odd}$
probability either $0$ or $1$. Hence $\mu\left(A\right)$ is equal to either $0$ or $1$. //

\end{example}

\vspace{0.4cm}

This example demonstrates that even when $\mu$ is ergodic relative
to the joint action of $\sigma$ and $\tau_G$ the subgroup in Theorem \ref{group_th} may
indeed depend on the point (in this example it is either $\left\{ 0\right\} $
or $\mathbb{Z}/2\mathbb{Z}$). As always, $\left( \pi_{\mu} \right)_{*}\mu$ is shift-invariant,
but the fibers of $\pi_{\mu}$ are not measure-theoretically respected by
$\tau_G$. Moreover, $h_{\left( \pi_{\mu} \right)_{*}\mu}\left(\sigma\right)>0$. Section \ref{zero_ent_section} will deal with the special class of invariant measures that satisfy the requirements of Theorem \ref{biper_th} but for which all these troublesome phenomena do not occur. This will be shown to hold exactly when $h_{\left(\pi_\mu\right)_*\mu}\left(\sigma\right)=0$. In the present section we shall see that this condition is sometimes guaranteed to hold.\\

Before continuing to state the next proposition, it may be a good time to notice that in the setting of Theorem \ref{pivato2} when $\mu$ is ergodic relative to $\tau_G$ - knowing $H_\mu$ - $\pi_{\mu}$ can just be defined algebraically as the map that applies component-wise the
canonical projection $G\rightarrow H \backslash G$. This is also the case even without ergodicity if it happens to be that there
is some $H\leq G$ for which $H_{\mu,x}=H$ $\mu$-almost-surely.\\

\begin{remark}
    As Example \ref{kitchens} demonstrates $h_{\left( \pi_\mu \right)_*\mu}\left(\sigma\right)$ does not necessarily vanish. Applying the Abramov-Rokhlin formula in an attempt to prove so would be erroneous since it presupposes that the $\sigma$-algebra in the original system corresponding to its factor $\pi_\mu$ is shift-invariant, i.e. that the shift map of the factor system is measure theoretically invertible (which is what is aimed to prove in the first place).\\
\end{remark}

Prop. \ref{zero_ent_suff}, follows the next proposition, will form a sufficient condition for $h_{\left( \pi_\mu \right)_*\mu}\left(\sigma\right)$ to be equal to zero.

\begin{proposition}
\label{coset_stab}

Let $G$ be a finite group, and let $\mu$ be a $\tau_G$-invariant Borel Probability measure on $G^{\mathbb{N}_0}$. We have two similar results:

\textbf{(i)} Suppose that $\mu$ is ergodic relative to $\tau_G$ and that $H_\mu$ (as in Theorem \ref{pivato2}) is a normal subgroup of
$G$. Then there exists a subgroup $U_\mu \leq H_\mu$, so that for $x\in G^{{\mathbb{N}_0}}$
$\mu$-almost-surely the partition of the elements of the coset $H_\mu x_{1}$ at index
$1$ into elements that determine the same coset of $H_\mu$ at index $0$ is exactly the partition into the right cosets
of $U_\mu$ in $G$ that are contained in $H_\mu x_{1}$.

\textbf{(ii)} Suppose that $\mu$ is also shift-invariant and that there exists $H\vartriangleleft G$ so that $\mu$-almost-surely $H_{\mu,x}$ (as in Theorem \ref{group_th}) is equal to it. Then for $x\in G^{{\mathbb{N}_0}}$
there $\mu$-almost surely exists $U_{\mu,x}\leq H$
so that the partition of the elements of the coset $Hx_{1}$
at index $1$ into elements that determine the same coset of $H$ at index $0$ is exactly the partition into the right cosets of $U_{\mu,x}$ in $G$ that are contained in $Hx_{1}$.
    
\end{proposition}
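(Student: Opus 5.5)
The plan is to imitate the proofs of Theorem \ref{pivato2} and Theorem \ref{group_th} one index further to the right. I will attach to $\mu$-almost every $x$ a set $U_{\mu,x}\subseteq H$, show that it is $\tau_G$-invariant, use this to make it depend only on the coordinates $x_2,x_3,\dots$, and then read off closure under multiplication.

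\textbf{Setting up the coset map.} Fix a typical $x$ and consider the points $y$ with $y_i=x_i$ for $i\geq 2$ and $y_1\in Hx_1$. By Theorem \ref{group_th} (resp. Theorem \ref{pivato2} in case (i)), the conditional law of $y_0$ given $y_1,y_2,\dots$ is uniform on a single right coset of $H$. I write $\varphi_x(y_1)\in H\backslash G$ for this coset. I then define
\[
U_{\mu,x}=\{u\in H\,:\,\varphi_x(ux_1)=\varphi_x(x_1)\}.
\]
I will only use points $y$ of positive conditional probability with respect to $\sigma^{-2}\mathcal{B}$. These points exhaust a full-measure set, so $\varphi_x$ is defined almost surely on all of the relevant part of $Hx_1$.

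\textbf{$\tau_G$-invariance.} Let $y=(y_0,ux_1,x_2,x_3,\dots)$ with $u\in H$. Then
\[
\tau_G(y)=(y_0ux_1,\;ux_1x_2,\;x_2x_3,\dots),
\]
so the index-$1$ coordinate is $u(x_1x_2)$, with the same $u$. Since $H$ is normal, $Hy_0\cdot Hux_1=H\,y_0x_1$, and hence
\[
\varphi_{\tau_G x}(u\,x_1x_2)=\varphi_x(ux_1)\cdot Hx_1 .
\]
Right multiplication by $Hx_1$ is a bijection of $H\backslash G$. Therefore $\varphi_x(ux_1)=\varphi_x(x_1)$ if and only if $\varphi_{\tau_G x}(ux_1x_2)=\varphi_{\tau_G x}(x_1x_2)$, that is, $U_{\mu,\tau_G x}=U_{\mu,x}$.

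For this to be legitimate, $\tau_G$ must carry the positive-probability points of the fiber bijectively onto those of the image fiber. I will get this from Lemma \ref{parry_lemma} and Remark \ref{parry_remark}, applied with $S=\sigma^2$, $T=\tau_G$. Left permutativity gives bijectivity on fibers of $\sigma^2$, and then $x\mapsto\mu_x^{\sigma^{-2}\mathcal{B}}(\{x\})$ is $\tau_G$-invariant. I expect this bookkeeping, namely that almost every point in the fiber is ``good'' and that goodness is preserved, to be the main technical obstacle.

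\textbf{Independence from $x_0,x_1$ and the subgroup property.}
\begin{itemize}
\item In case (i), ergodicity of $\mu$ relative to $\tau_G$ gives $U_{\mu,x}=U_\mu$ almost surely, exactly as in Theorem \ref{pivato2}. The a.s. statement applies simultaneously to all good points of a typical fiber, since these form a conull set by the previous step.
\item In case (ii), $\tau_G$-invariance makes $x\mapsto U_{\mu,x}$ measurable with respect to the Pinsker $\sigma$-algebra of $\tau_G$. By Lemmas \ref{pinsker_lemma1} and \ref{pinsker_lemma2} this equals that of $\sigma$, which is contained in $\sigma^{-2}\mathcal{B}$. Hence $U_{\mu,x}$ depends only on $x_2,x_3,\dots$.
\end{itemize}
In either case, replacing $x_1$ by $ux_1$ for $u\in U_{\mu,x}$ does not change $U$. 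So if $u,v\in U$, then
\[
\varphi(vux_1)=\varphi(ux_1)=\varphi(x_1),
\]
and $U$ is closed under multiplication. Being a finite subset of $H$, it is a subgroup.

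\textbf{Fibers are right cosets.} Finally, applying the definition of $U$ at base point $wx_1$ (allowed because $U$ does not depend on $x_1$) gives, for $w,w'\in H$,
\[
\varphi(w'x_1)=\varphi(wx_1)\iff w'w^{-1}\in U.
\]
Thus the fibers of $\varphi$ on $Hx_1$ are exactly the right cosets $U\,wx_1$ contained in $Hx_1$, which is the claimed partition.
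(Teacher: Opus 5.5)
Your proposal is correct and follows the paper's proof essentially step for step. It uses the same set $U_{\mu,x}$ and the same normality computation giving $U_{\mu,\tau_G x}=U_{\mu,x}$. It then uses ergodicity in (i), and Lemmas \ref{pinsker_lemma1}--\ref{pinsker_lemma2} (hence $\sigma^{-2}\mathcal{B}$-measurability) in (ii), before deducing closure under multiplication; your final check that the fibers are right cosets of $U$ makes explicit what the paper leaves implicit.

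The one slip is in the bookkeeping for (i). There $\mu$ is not assumed shift-invariant, so Lemma \ref{parry_lemma} with $S=\sigma^2$ is not available. It is also not needed: Theorem \ref{pivato2}, applied to $\mu$ and to the $\tau_G$-ergodic measure $\sigma_*\mu$, makes the relevant conditional probabilities a.s. constant. This constancy is also what tacitly justifies the paper's appeal to $\tau_G$-invariance in that case.
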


\begin{proof}

\textbf{(i)} Assume $\left(\,\cdot\,,y_{1},x_{2},x_{3},x_{4},\dots\right)$
determines the same coset $H_\mu x_{0}$ at index $0$ for some $y_{1}\in H_\mu x_{1}$.
Then, since $\mu_{\left(\cdot\right)}^{\sigma^{-1}\mathcal{B}}\left(\left\{ \left(\cdot\right)\right\} \right)$
is $\tau_G$-invariant, applying $\tau_G$ on $\left(\,x_{0}\,,x_{1},x_{2},x_{3},x_{4},\dots\right)$
and $\left(\,x_{0}\,,y_{1},x_{2},x_{3},x_{4},\dots\right)$ yields
that the two outcomes determine $H_\mu x_{0}x_{1}$ and $H_\mu x_{0}y_{1}$
respectively. The normality assumption implies that \[H_\mu x_{0}x_{1}=H_\mu x_{0} H_\mu x_{1}=H_\mu x_{0} H_\mu y_{1} = H_\mu x_{0} y_{1},\]
so also the two outcomes determine the same coset at index $0$.\\

Now consider the set \[ U_{\mu,x}=\left\{ h\in H_\mu\,:\,\left(\,\cdot\,,hx_{1},x_{2},x_{3},x_{4},\dots\right)\, \mbox {determines the coset} \,H_\mu x_{0}\right\}. \]
The last paragraph and the associativity of $G$ imply that the set-valued function $x\mapsto U_{\mu,x}$
is $\tau_G$-invariant. By the ergodicity assumption there is a set
$U_\mu \subseteq H_\mu $ so that $U_{\mu,x}=U_\mu$ $\mu$-almost surely. By the definition
of $U_{\mu,x}$, this implies that $U_\mu$ is closed under the group multiplication
operation and hence a subgroup of the finite group $H_\mu $.\\

\textbf{(ii)} The difference from the previous proof is that now the
$\tau_G$-invariant set-valued function $x\mapsto U_{\mu,x}$ need not be
constant. However, by its $\tau_G$-invariance, it is measurable relative
to the Pinsker $\sigma$-algebra of $\tau_G$ (by Lemma \ref{pinsker_lemma1}) and hence of $\sigma$
(by Lemma \ref{pinsker_lemma2}). Hence it is measurable relative to $\sigma^{-2}\mathcal{B}$. Thus $\mu$-almost-surely  $U_{\mu,x}$ is closed under the group multiplication and hence a subgroup.
\end{proof}

\vspace{0.3cm}

The proof of the next proposition is along the lines of Prop. \ref{coset_stab}.\\

\begin{proposition}
    \label{zero_ent_suff}
Let $G$ be a finite group, and $\mu$ be a shift-invariant Borel probability measure on $G^{\mathbb{N}_0}$ invariant also relative to $\tau_G$. Assume that there exists $H\vartriangleleft G$ for which $H_{\mu,x}=H$ ($H_{\mu,x}$ as in Theorem \ref{group_th}) $\mu$-almost-surely.
If $C$ is the maximal size of a proper subgroup of $H$ and
$\frac{\left|G\right|}{\left|H\right|}<\frac{\left|H\right|}{C}$
then $h_{\left( \pi_{\mu} \right)_{*}\mu}\left(\sigma\right)=0$.
\end{proposition}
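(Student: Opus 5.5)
By Lemma \ref{zero_ent}, it suffices to show that $\mu$-almost-surely, for every $k\geq1$, the set $\Lambda_{\mu,x}$ is $\sigma^{-k}\mathcal{B}$-measurable. Under our hypothesis $H_{\mu,x}=H$ almost surely, $\Lambda_{\mu,x}$ is the right coset $Hx_0$. So the claim becomes: the coset $Hx_0$ is almost-surely a function of $(x_k,x_{k+1},\dots)$ for every $k$. For $k=1$ this is Theorem \ref{group_th} itself. I will go from $k$ to $k+1$ by showing that the dependence on $x_1$ factors through the coset $Hx_1$.

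The key step uses Prop. \ref{coset_stab}(ii), which applies since $H\vartriangleleft G$ and $H_{\mu,x}=H$ almost surely. It gives, for almost every $x$, a subgroup $U_{\mu,x}\leq H$ with the following property. Fix the tail $(x_2,x_3,\dots)$ and let the index-$1$ entry range over the coset $Hx_1$. Then two choices determine the same coset of $H$ at index $0$ exactly when they lie in the same right coset of $U_{\mu,x}$.

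There are $|H|/|U_{\mu,x}|$ such $U_{\mu,x}$-cosets. By definition of the partition, distinct classes give distinct cosets of $H$ at index $0$. This yields the injection bound
\[
\frac{|H|}{|U_{\mu,x}|}\leq\frac{|G|}{|H|}.
\]
If $U_{\mu,x}$ were proper, then $|U_{\mu,x}|\leq C$, so $\frac{|H|}{|U_{\mu,x}|}\geq\frac{|H|}{C}>\frac{|G|}{|H|}$, a contradiction. Hence $U_{\mu,x}=H$ almost surely. This means $Hx_0$ does not change when $x_1$ is replaced by any element of $Hx_1$, so $Hx_0$ is a function of $(Hx_1,x_2,x_3,\dots)$.

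Now iterate. By shift-invariance of $\mu$, applying the same fact at $\sigma^j(x)$ shows that $Hx_j$ is almost-surely a function of $(Hx_{j+1},x_{j+2},\dots)$. In particular, $Hx_1$ is a function of $(x_2,x_3,\dots)$ by Theorem \ref{group_th} applied at $\sigma(x)$, so $Hx_0$ is $\sigma^{-2}\mathcal{B}$-measurable. Inductively, suppose $Hx_0$ is a function of $(Hx_{k-1},x_k,\dots)$. Substituting $Hx_{k-1}$ as a function of $(Hx_k,x_{k+1},\dots)$, and $Hx_k$ as a function of $(x_{k+1},\dots)$, gives $\sigma^{-(k+1)}\mathcal{B}$-measurability for every $k$.

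The step I expect to be the main obstacle is the almost-sure bookkeeping in the phrase ``determines the same coset''. The alternative values $y_1\in Hx_1$ must actually occur, i.e.\ have positive conditional probability given $(x_2,x_3,\dots)$, so that the partition in Prop. \ref{coset_stab}(ii) refers to genuinely realized points. I would justify this from Theorem \ref{group_th} applied at $\sigma(x)$: the conditional law of $x_1$ given $\sigma^{-2}\mathcal{B}$ is uniform on the whole coset $Hx_1$, so every $y_1\in Hx_1$ has positive conditional probability. The countably many null sets arising in the induction can then be discarded together.
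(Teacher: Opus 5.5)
Your first step is the same as the paper's. Prop.~\ref{coset_stab}(ii) together with the count $|H|/|U_{\mu,x}|\leq |G|/|H|<|H|/C$ forces $U_{\mu,x}=H$, so $Hx_0$ depends on $x_1$ only through $Hx_1$.

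The gap is in the iteration. Write the one-step fact as $Hx_j=F(Hx_{j+1},x_{j+2},x_{j+3},\dots)$. Composing gives
\[
Hx_0=F\bigl(F(Hx_2,x_3,x_4,\dots),\,x_2,x_3,\dots\bigr).
\]
The raw value $x_2$ still sits in the second slot of the outer $F$. Substituting $Hx_2$ as a function of $(x_3,x_4,\dots)$ does not remove it, so you get $\sigma^{-2}\mathcal{B}$-measurability but not $\sigma^{-3}\mathcal{B}$-measurability. The same thing happens at every stage. From ``$Hx_0$ is a function of $(Hx_{k-1},x_k,\dots)$'' your substitutions only give a function of $(Hx_k,x_k,x_{k+1},\dots)$, i.e.\ of $(x_k,x_{k+1},\dots)$. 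That is neither $\sigma^{-(k+1)}\mathcal{B}$-measurability nor the next inductive hypothesis. For the next step you need that, given $(x_{k+1},\dots)$, $Hx_0$ depends on $x_k$ only through $Hx_k$. This is a new statement at depth $k$.

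This statement cannot be extracted from the one-step fact plus shift-invariance. Take $G=\mathbb{Z}/6\mathbb{Z}\cong\mathbb{Z}/3\mathbb{Z}\times\mathbb{Z}/2\mathbb{Z}$ and $H=\mathbb{Z}/3\mathbb{Z}\times\{0\}$, so $C=1$ and $2<3$. Let $x_n=(h_n,c_n)$, where the $h_n$ are i.i.d.\ uniform on $\mathbb{Z}/3\mathbb{Z}$, $c_0$ is uniform and independent of them, and $c_{n+1}=c_n+\phi(h_{n+2})$ with $\phi(0)=0$, $\phi(1)=\phi(2)=1$. This process has the following properties:
\begin{itemize}
\item it is stationary;
\item the law of $x_0$ given $(x_1,x_2,\dots)$ is uniform on $Hx_0$;
\item $Hx_0=c_0=c_1+\phi(h_2)$ depends on $x_1$ only through $Hx_1$.
\end{itemize}
Yet $h_2$ is independent of $(x_3,x_4,\dots)$, so $Hx_0$ is not $\sigma^{-3}\mathcal{B}$-measurable.

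What rules this out is $\tau_G$-invariance, used again at each depth, and that is what the paper's induction does. It assumes the cosets at indices $0,\dots,k-2$ are determined by $(x_{k-1},x_k,\dots)$. It then runs the argument of Prop.~\ref{coset_stab} for the map sending $y\in Hx_{k-1}$ (with $x_k,x_{k+1},\dots$ fixed) to the resulting coset at index $0$. This shows the fibers of that map are the right cosets of a subgroup $U'_{\mu,x}\leq H$; the subgroup property comes from $\tau_G$-invariance through Lemmas~\ref{pinsker_lemma1} and~\ref{pinsker_lemma2}. The same counting inequality then gives $U'_{\mu,x}=H$.

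Your counting idea is the right one, but it has to be applied afresh at every depth, with a new subgroup each time, not just once. The positivity issue you flagged as the main obstacle is minor by comparison.
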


\begin{proof}

Notice that $U_{x}$ of Prop. \ref{coset_stab} (ii) is $\mu$-almost-surely equal
to $H$ since $H$ has less than $\frac{\left|H\right|}{\left|U_{\mu,x}\right|}$ different cosets for any $U_{\mu,x}<H$. So $\mu$-almost-surely the coset of $H$ at index $0$ of $x$ given its values at indices $1,2,3,4,\dots$ does not depend on the value chosen at index $1$ from the coset of $H$ that the values at indices $2,3,4,\dots$ determine at index $1$. We exploit this idea to prove by induction on $1\leq k\in{\mathbb{N}_0}$ that $\mu$-almost-surely the values of $x$ at indices $k,k+1,k+2,\dots$ determine a specific coset of $H$ at each of the indices $0,1,2,\dots,k-1$.\\

$k=1$ is just Theorem \ref{group_th}. For $k>1$, assume the
induction hypothesis holds for $k-1$. Thus $\mu$-almost-surely specific
cosets $Hx_{1},Hx_{2},\dots,Hx_{k-1}$ at indices $1,2,\dots,k-1$
are determined by the values at indices $k,k+1,k+2,\dots$ of $x$.
As for the index $0$, by an argument similar to the proof of Prop.
\ref{coset_stab}, it follows that there exists a subgroup $U'_{\mu,x}\leq H$ that
its right cosets in $G$ that are contained in $Hx_{k-1}$ form exactly
its partition into the possibilities of cosets of $H$ they permit
to appear at index $0$ (the cosets at indices $1,\dots, k-1$ are already determined by the induction hypothesis). Then $U'_{\mu,x}$ must be equal to $H$,
i.e. also the coset of $H$ at index $0$ is determined by the values at indices $k,k+1,k+2,\dots$ of $x$.\\

So in particular $x\mapsto\left(\pi_\mu\left(x\right)\right)_0$ is $\sigma^{-k}\mathcal{B}$-measurable for all $k\in {\mathbb{N}_0}$. Applying Lemma \ref{zero_ent} to the statement we have just proved establishes
the claim.

\end{proof}

Let us apply Prop. \ref{zero_ent_suff} to a examples.\\

\begin{example}
     For a positive odd integer $m$ we take $G = \mathbb{Z}/2m\mathbb{Z}$. Given $\mu$ as in Theorem \ref{group_th}, if $H_{\mu,x}$ is $\mu$-almost-surely equal to $G = 2\mathbb{Z}/2m\mathbb{Z}$ then the
conditions of Prop. \ref{zero_ent_suff} are satisfied and $h_{\left( \pi_{\mu} \right)_{*}\mu}\left(\sigma\right)=0$.
$\pi_\mu$ is just the component-wise canonical projection $\mathbb{Z}/2m \rightarrow  \left(\mathbb{Z}/2m\mathbb{Z}\right)/\left(2\mathbb{Z}/2m\mathbb{Z}\right) \cong \mathbb{Z}/2\mathbb{Z}$. The measure corresponding to $\left( \pi_{\mu} \right)_{*}\mu$ on $\left(\mathbb{Z}/2\mathbb{Z}\right)^{{\mathbb{N}_0}}$ is then also invariant relative to the group cellular automaton of $\mathbb{Z}/2\mathbb{Z}$. We thus obtain for the Ledrappier cellular automaton a shift-invariant measure which is also invariant relative to that automaton and possesses zero entropy relative to each of those two maps. Atom-less such measures are yet to be known to exist.
\end{example}

\begin{example}
\label{dihedral}
    All is similar to the previous example if $\mathbb{Z}/m\mathbb{Z}$ is replaced by the Dihedral group $D_m$ (for odd $m$) and the subgroup $2\mathbb{Z}/m\mathbb{Z}$ is replaced by the subgroup of rotations (which is normal).
\end{example}

\vspace{0.3cm}

The next section is devoted to a more general analysis of the case $h_{\left( \pi_{\mu} \right)_{*}\mu}\left(\sigma\right)=0$ for bi-permutative cellular automata.

\begin{remark}
Theorem \ref{zero_ent_synth} and Remark \ref{converse_rem_to_synthesis_thrm}, Cor. \ref{group_cor} and Remark \ref{group_conv_rem}  relate to the converse of the construction of the zero entropy measures (relative to each of the two maps) in the two Examples.
\end{remark}

\vspace{0.3cm}

\section{The Case $h_{\left( \pi_{\mu} \right)_{*}\mu}\left(\sigma\right)=0$}

\label{zero_ent_section}

The special case in the setting of Theorem \ref{biper_th} in which $h_{\left( \pi_{\mu} \right)_{*}\mu}\left(\sigma\right)=0$
is susceptible to further analysis.\\

First, we aim to show that in this case, not only the shift, but also the bi-permutative cellular automaton $\tau$ induces a $\left(\pi_{\mu}\right)_*\mu$-invariant map on the probability space $\left(\left(2^\Lambda\right)^{\mathbb{N}_0},\left( \pi_{\mu} \right)_{*}\mu\right)$.\\

We define a few maps.\\

\begin{definition}
We denote by 

$\psi_{1}:\left\{ \left(A,a\right)\,:\,A\subseteq\Lambda,\,a\in A\right\} ^{\mathbb{N}_0}\rightarrow\left(2^\Lambda\right)^{\mathbb{N}_0}$
the component-wise projection $\left(A,a\right)\mapsto A$,

and by
$\psi_{2}:\left\{ \left(A,a\right)\,:\,A\subseteq\Lambda,\,a\in A\right\} ^{\mathbb{N}_0}\rightarrow \Lambda^{\mathbb{N}_0}$
the component-wise projection $\left(A,a\right)\mapsto a$.    
\end{definition}

\vspace{0.3cm}

\begin{definition}
Given the conditions of Theorem \ref{biper_th}, we denote by $\varphi_\mu$ the
map from $\Lambda^{\mathbb{N}_0}$
to $\left\{ \left(A,a\right)\,:\,A\subseteq\Lambda,\,a\in A\right\} ^{\mathbb{N}_0}$
defined $\mu$-almost-surely as $\left(\varphi_\mu\left(x\right)\right)_{i}=\left(\left(\pi_{\mu}\left(x\right)\right)_{i},x_i\right)$.    
\end{definition}

\vspace{0.3cm}

In particular,  $\psi_1\circ\varphi_\mu = \pi_{\mu}$. Also, equipping the space $\left\{ \left(A,a\right)\,:\,A\subseteq\Lambda,\,a\in A\right\} ^{\mathbb{N}_0}$
with the measure $\left(\varphi_\mu\right)_{*}\mu$, notice that $\psi_{2}$
and $\varphi_\mu$ are measure-theoretically inverses of each other.\\

\begin{definition}
Given any cellular automaton $\tau:\Lambda^{\mathbb{N}_0}\rightarrow\Lambda^{\mathbb{N}_0}$, we define a new cellular automaton $\tau':\left(2^\Lambda\right)^{\mathbb{N}_0}\rightarrow\left(2^\Lambda\right)^{\mathbb{N}_0}$ by \[ \left(\tau'\left(\left(A_0,A_1,A_2,\dots\right)\right)\right) _i = \{\left(\tau\left(\left(a_0,a_1,a_2,\dots\right)\right)\right)_i\,:\,\forall i \,\, a_i\in A_i\}.\]     
\end{definition}

\vspace{0.3cm}

\begin{theorem}
\label{zero_ent_analysis}
Let $\tau:\Lambda^{\mathbb{N}_0}\rightarrow\Lambda^{\mathbb{N}_0}$ be a bi-permutative cellular automaton, and let $\mu$ be a shift-invariant Borel probability measure on $\Lambda^{\mathbb{N}_0}$ which is also $\tau$-invariant (this is the setting of Theorem \ref{biper_th}). Assume also $h_{\left( \pi_{\mu} \right)_{*}\mu}\left(\sigma\right)=0$. We define a shift-invariant measure $\tilde{\mu}$ on $\left\{ \left(A,a\right)\,:\,A\subseteq\Lambda,\,a\in A\right\} ^{{\mathbb{N}_0}}$ by requiring that for a cylinder $\tilde{\mu}\left(\left[\left(A_{0},a_{0}\right),\dots,\left(A_{n},a_{n}\right)\right]\right)=\frac{\left( \pi_{\mu} \right)_{*}\mu\left(\left[A_{0},\dots,A_{n}\right]\right)}{\left|A_{0}\right|\cdots\left|A_{n}\right|}$. Then:\\ 
\textbf{(i):} $\varphi_\mu$ is a measure-theoretical isomorphism between the dynamical systems $\left(\Lambda^{\mathbb{N}_0},\sigma,\mu\right)$
and $\left(\left\{ \left(A,a\right)\,:\,A\subseteq\Lambda,\,a\in A\right\} ^{{\mathbb{N}_0}},\sigma,\tilde{\mu}\right)$, i.e. $\left(\varphi_\mu\right)_*\mu = \tilde{\mu}$.\\
\textbf{(ii):} $\left(\left(2^{\Lambda}\right)^{{\mathbb{N}_0}},\sigma,\left( \pi_{\mu} \right)_{*}\mu\right)$ is the the Pinsker factor of $\left(\Lambda^{\mathbb{N}_0},\sigma,\mu\right)$.\\
\textbf{(iii):} Measure-theoretically $\pi_{\mu}\circ\tau = \tau'\circ\pi_{\mu}$ (in particular, the measure $\left(\pi_{\mu}\right)_*\mu$ on $\left(2^{\Lambda}\right)^{{\mathbb{N}_0}}$ is also invariant relative to $\tau'$).\\
\textbf{(iv):} If  $\mu$  is ergodic relative to the joint action of $\sigma$ and $\tau$ then there exists an integer $1\leq k\leq \Lambda$ so that  all components of a point in $\left(2^{\Lambda}\right)^{\mathbb{N}_0}$ are $\left( \pi_{\mu} \right)_{*}\mu$-almost-surely subsets of size $k$.
\end{theorem}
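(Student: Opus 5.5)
The plan rests on Lemma \ref{zero_ent}. The hypothesis $h_{(\pi_\mu)_*\mu}(\sigma)=0$ gives that $\Lambda_{\mu,x}=(\pi_\mu(x))_0$ is $\sigma^{-k}\mathcal{B}$-measurable for every $k\geq1$. By shift-equivariance, each $(\pi_\mu(x))_i$ is then measurable with respect to the tail $\bigcap_k\sigma^{-k}\mathcal{B}$. So the whole $\pi_\mu(x)$ is tail measurable, and it is determined by $x_{n+1},x_{n+2},\dots$ for every $n$.

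\textbf{(i).} I would compute $\mu$ of a cylinder $[a_0,\dots,a_n]$ intersected with $\pi_\mu^{-1}[A_0,\dots,A_n]$ by successive conditioning. Write it as the integral over $\sigma^{-(n+1)}\mathcal{B}$ of successive conditional probabilities. At step $i$, the conditional law of $x_i$ given $x_{i+1},x_{i+2},\dots$ is, by Theorem \ref{biper_th} and shift invariance, uniform on $\Lambda_{\mu,\sigma^i x}=(\pi_\mu(x))_i$. This set is already a function of $x_{n+1},\dots$, so it does not depend on the choices $a_{i+1},\dots,a_n$. Iterating yields $\mu$ of the cylinder equal to $\mathbf{1}[a_i\in A_i\ \forall i]\,\mu(\pi_\mu^{-1}[A_0,\dots,A_n])/\prod|A_i|$, which is $\tilde\mu$ of the corresponding cylinder. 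Since $\psi_2$ inverts $\varphi_\mu$, this gives the isomorphism. I expect the main care to be needed here: one must verify that the almost-sure determination of the sets by the far tail survives modifying the coordinates $0,\dots,n$. This holds precisely because the sets are $\sigma^{-(n+1)}\mathcal{B}$-measurable.

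\textbf{(ii).} The factor has zero entropy, so it lies in the Pinsker factor. For the reverse inclusion, (i) shows that conditionally on $\pi_\mu(x)$ the coordinates $x_i$ are independent and uniform on $A_i$, i.e. a relative Bernoulli (product) extension. I would show that the relative entropy $h_\mu(\sigma\mid\pi_\mu^{-1}\text{-algebra})$ equals $\int\log|\Lambda_{\mu,x}|\,d\mu$. I would then argue that any set measurable in the Pinsker algebra is measurable modulo $\mu$ in the factor, using that a product measure conditioned on the factor has trivial tail (relative Kolmogorov 0–1 law). The Pinsker algebra is contained in the tail, so it is contained in the factor algebra.

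\textbf{(iii).} By Lemma \ref{pinsker_lemma2} the Pinsker algebras of $\sigma$ and $\tau$ coincide, and the Pinsker algebra is $\tau$-invariant. By (ii), $\pi_\mu\circ\tau$ is therefore measurable with respect to the factor algebra. To identify it with $\tau'\circ\pi_\mu$, I would compute $\Lambda_{\mu,\tau x}$ through its definition as the support of the conditional law at index $0$ given the positive indices of $\tau x$. Using (i), the coordinates $x_0,x_1$ range independently and uniformly over $A_0\times A_1$ given the factor. By bi-permutativity, the map $x_0\mapsto r(x_0,x_1)$ is a bijection, so the values of $(\tau x)_0$ compatible with the later coordinates of $\tau x$ form exactly $r(A_0\times A_1)$ restricted appropriately. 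Matching supports then gives $(\tau'\pi_\mu x)_0$; the sizes agree by the $\tau$-invariance part of Theorem \ref{biper_th}.

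\textbf{(iv).} The function $x\mapsto|\Lambda_{\mu,x}|=1/\mu_x^{\sigma^{-1}\mathcal{B}}(\{x\})$ is $\tau$-invariant by Theorem \ref{biper_th}. It is also $\sigma$-invariant, being tail measurable and determined consistently: $|\Lambda_{\mu,\sigma x}|$ equals $|\Lambda_{\mu,x}|$ via (iii) and the fact that the measure is a relative product. Hence joint ergodicity forces it to be constant.
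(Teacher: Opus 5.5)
The genuine gap is in (iv). Your (i) and (ii) are the paper's arguments; in (i) you are more careful than the paper about why the sets $(\pi_\mu(x))_i$ survive the conditioning. Deriving the factor-measurability of $\pi_\mu\circ\tau$ in (iii) from $\tau^{-1}\mathcal{P}\subseteq\mathcal{P}$ together with (ii) is a fine substitute for the paper's tail argument.

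In (iv) you assert that $x\mapsto|\Lambda_{\mu,x}|$ is $\sigma$-invariant, citing tail measurability, (iii) and the relative product structure. None of these compares $|A_1|=|\Lambda_{\mu,\sigma x}|$ with $|A_0|=|\Lambda_{\mu,x}|$: a tail-measurable function need not be shift-invariant, and the product structure of the fibres says nothing about the sizes of the $A_i$. The missing ingredient is right permutativity, which your proposal never uses.

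From (iii), $\Lambda_{\mu,\tau x}=r(A_0\times A_1)$. For fixed $a_0\in A_0$ the map $r(a_0,\cdot)$ is injective, so $|r(A_0\times A_1)|\ge|A_1|$. On the other hand $|\Lambda_{\mu,\tau x}|=|A_0|$ by the $\tau$-invariance in Theorem \ref{biper_th}. Hence $|A_1|\le|A_0|$ $\mu$-almost-surely.

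This is still only an inequality. To get equality, note that $|A_1|$ and $|A_0|$ have the same distribution by shift-invariance of $\mu$, so the almost-sure inequality is an almost-sure equality. (The paper instead lets the non-increasing sequence $|A_i|$ stabilize at some $k_x$ invariant under both maps.) Only then does joint ergodicity make $|\Lambda_{\mu,x}|$ constant, and shift-invariance of $\mu$ then gives $|A_i|=k$ for every $i$.

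A smaller point in (iii): do not compute $\Lambda_{\mu,\tau x}$ as the values of $(\tau x)_0$ ``compatible with the later coordinates of $\tau x$''. Conditioning on $(\tau x)_1,(\tau x)_2,\dots$ means conditioning on the coarser $\sigma$-algebra $\sigma^{-1}\tau^{-1}\mathcal{B}$, which is not the one (i) describes. Once you have $\pi_\mu\circ\tau=\eta\circ\pi_\mu$, argue as follows:
\begin{itemize}
\item $(\tau x)_0\in\Lambda_{\mu,\tau x}=(\eta(z))_0$ almost surely.
\item Given $z=\pi_\mu(x)$, every pair in $A_0\times A_1$ has positive conditional probability by (i), so $r(A_0\times A_1)\subseteq(\eta(z))_0$.
\item $|(\eta(z))_0|=|A_0|\le|r(A_0\times A_1)|$ by $\tau$-invariance and left permutativity, so the two sets coincide.
\end{itemize}
This makes precise your ``matching supports'' and ``sizes agree'', and it supplies exactly the identity used for the inequality in (iv).
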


\begin{proof}

\textbf{(i):} We know that
\[
\left(\varphi_\mu\right)_*\mu\left(\left[\left(A_{0},a_{0}\right),\dots,\left(A_{n},a_{n}\right)\right]\right)
\]
\[
=\left(\varphi_\mu\right)_*\mu\left(\left[\left(A_{0},\cdot\right),\dots,\left(A_{n},\cdot\right)\right]\right)\cdot\left(\varphi_\mu\right)_*\mu\left(\left[\left(A_{0},a_{0}\right),\dots,\left(A_{n-1},a_{n}\right)\right]\,|\,\left[\left(A_{0},\cdot\right),\dots,\left(A_{n},\cdot\right)\right]\right)
\]

\[
=\frac{\left( \pi_{\mu} \right)_{*}\mu\left(\left[A_{0},\dots,A_{n}\right]\right)}{\left|A_{0}\right|\cdots\left|A_{n}\right|},
\]

where the equality \[\left(\varphi_\mu\right)_*\mu\left(\left[\left(A_{0},a_{0}\right),\dots,\left(A_{n},a_{n}\right)\right]\,|\,\left[\left(A_{0},\cdot\right),\dots,\left(A_{n},\cdot\right)\right]\right)=\frac{1}{\left|A_{0}\right|\cdots\left|A_{n}\right|}\]
is because $h_{\left( \pi_{\mu} \right)_{*}\mu}\left(\sigma\right)=0$ (given a point in $\Lambda^{\mathbb{N}_0}$ for which the first values of its image by $\pi_\mu$ are $A_0,\dots,A_n$,  modifying $a_n$ inside $A_n$ cannot alter $A_{n-1}$ etc.). Hence $\left(\varphi_\mu\right)_*\mu=\tilde{\mu}$.\\

\textbf{(ii):} Let $E$ be a tail event with respect to the partition according to the value at index zero of the system $\left(\Lambda^{\mathbb{N}_0},\sigma,\mu\right)$. We need to prove that $E\in\pi_{\mu}^{-1}\mathcal{B}_{\left(2^{\Lambda}\right)^{{\mathbb{N}_0}}}$. This is equivalent to proving that the conditional expectation $\mathbb{E}_\mu \left( \chi_E\,|\,\pi_{\mu}^{-1} \mathcal{B}_{\left(2^{\Lambda}\right)^{{\mathbb{N}_0}}} \right) \left(x\right)$ is $\mu$-almost-surely equal to either $0$ or $1$. If $\varphi_\mu\left(x\right) = \left( \left( A_0, x_0\right), \left(A_1, x_1 \right), \left( A_2,x_2 \right),\dots \right)$
 then the atom of $x$ in the countably generated $\sigma$-algebra $\pi_{\mu}^{-1}\mathcal{B}_{\left(2^{\Lambda}\right)^{{\mathbb{N}_0}}}$ corresponds, under the identification made by $\varphi_\mu$, to $ \left( \left( A_0, \cdot\right), \left(A_1, \cdot \right), \left( A_2,\cdot \right),\dots \right)$. The conditional probability $\mu_x^{\pi_{\mu}^{-1}\mathcal{B}_{\left(2^{\Lambda}\right)^{{\mathbb{N}_0}}} }$ on such an atom is $\mu$-almost-surely identifiable with the probability measure on $A_0\times A_1\times A_2 \times \dots$ that represents independent uniformly random choices on $A_0,A_1,A_2,\dots$. Just like on a Bernouli system, any tail-event of this non-stationary system has to be of probability either $0$ or $1$. When this is the case, as the interesection of $E$ with that atom is a tail event in $A_0\times A_1\times A_2\times \dots$, the value of $\mu_x^{\pi_{\mu}^{-1}\mathcal{B}_{\left(2^{\Lambda}\right)^{{\mathbb{N}_0}}}} \left( E \right)$  is either $0$ or $1$.\\

\textbf{(iii):} To prove that the fibers of $\pi_{\mu}$ are measure-theoretically
respected by $\tau$, it suffices to show that $\left( \pi_{\mu} \right)_{*}\mu$-almost-surely $\left(\left(\pi_{\mu}\left(x\right)\right)_{0},\left(\pi_{\mu}\left(x\right)\right)_{1},\left(\pi_{\mu}\left(x\right)\right)_{2},\dots\right)$
 determines $\left(\pi_{\mu}\left(\tau\left(x\right)\right)\right)_{0}$. We know that $\left( x_0,x_1,x_2,\dots \right)$ $\mu$-almost-surely does, and that it does so in a manner that does not depend on any of the the first $x_{0},\dots,x_{n}$ because $\left(\left(\pi_{\mu}\left(\tau\left(x\right)\right)\right)_{n+1},\left(\pi_{\mu}\left(\tau\left(x\right)\right)\right)_{n+2},\left(\pi_{\mu}\left(\tau\left(x\right)\right)\right)_{n+3},\dots\right)$ are still determined and $h_{\left( \pi_{\mu} \right)_{*}\mu}\left(\sigma\right)=0$. Thus it just depends on the Pinsker $\sigma$-algebra of $\left(\Lambda^{\mathbb{N}_0},\sigma,\mu\right)$ and by (ii) we are done.\\

This means that there exists a map $\eta: \left(2^{\Lambda}\right)^{{\mathbb{N}_0}}\rightarrow\left(2^{\Lambda}\right)^{{\mathbb{N}_0}}$ defined $\left(\pi_{\mu}\right)_*\mu$-almost-surely so that measure-theoretically $\pi_{\mu}\circ\tau = \eta\circ\pi_{\mu}$. But then $\left(\eta\left(z\right)\right)_0$ cannot be anything more or less than $\left(\tau'\left(z\right)\right)_0$ for $z\in \left(2^{\Lambda}\right)^{{\mathbb{N}_0}}$ $\left(\pi_{\mu}\right)_*\mu$-almost-surely.\\

\textbf{(iv):} The function $x \mapsto \left| \Lambda_{\mu,x} \right|$ is $\tau$-invariant (using the notation of Theorem \ref{biper_th}) and hence, by (iii) and by the right permutativity of $\tau$, a point $x$ with $\pi \left(x\right) = \left(A_0,A_1,A_2,\dots\right) \in \left(2^\Lambda\right)^{\mathbb{N}_0}$  satisfies  $\mu$-almost-surely that $\left|A_i\right| \geq  \left|A_{i+1}\right|$ for all $i$. This must stabilize on some size $k_x$ and $k_x$ is invariant relative to both $\sigma$ and $\tau$, thus ergodicity assures that there exists a $k$ so that $\mu$-almost-surely $k_x = k$ .

\end{proof}

\begin{definition}
    Given a cellular automaton $\tau:\Lambda^{\mathbb{N}_0}\rightarrow\Lambda^{\mathbb{N}_0}$ and an integer $1\leq k\leq \Lambda$, let us denote by $Z_{\tau,k}$ the subshift of $\left(2^{\Lambda}\right)^{{\mathbb{N}_0}}$ composed of the points $\left( A_0,A_1,A_2,\dots \right)\in \left(2^{\Lambda}\right)^{{\mathbb{N}_0}}$ for which $\left|A_0\right|=\left|A_1\right|=\left|A_2\right|=\dots=k$ and  $\left|\left(\left(\tau'\right)^n \left( A_0,A_1,A_2,\dots \right)\right)_i\right| = k$ for all $n>0$ and indices $i$. Also, let us denote $\cup_{k=1}^{\left| \Lambda \right|} Z_{\tau,k}$ by $Z_\tau$.\\
    
\end{definition}

\vspace{0.3cm}

In the setting of Theorem \ref{zero_ent_analysis}, by its part (iv), if $\mu$ is ergodic relative to the joint action of $\sigma$ and $\tau$, the support of $\left(\pi_{\mu}\right)_*\mu$  is contained in some $Z_{\tau,k}$. If $\mu$ is not ergodic relative to that joint action, we can still apply ergodic decomposition to deduce that the support of $\left(\pi_{\mu}\right)_*\mu$ is contained in $Z_\tau$.\\

$Z_\tau$ is never empty as it always contains the subshift composed of the points that all of their components are singletons and also the point $\left(\Lambda,\Lambda,\Lambda,\dots\right)$. In the setting of Theorem \ref{zero_ent_analysis}, if $\left(\pi_{\mu}\right)_*\mu$ is $\delta_{\left(\Lambda,\Lambda,\Lambda,\dots\right)}$ then $\mu$ is the uniform measure on $\Lambda^{\mathbb{N}_0}$, and if $\left(\pi_{\mu}\right)_*\mu$ is supported on the set of points that are composed only of singletons then $\mu$ is of zero entropy relative to the shift (equivalently, to $\tau$). We now show a relevant example.\\

\textbf{Example:} Let us take $\Lambda=\left\{ A,B,C\right\} $ with
$r$ being the commutative binary operation defined by the following
diagram (the labels on the edges are the outcomes). \\
\includegraphics{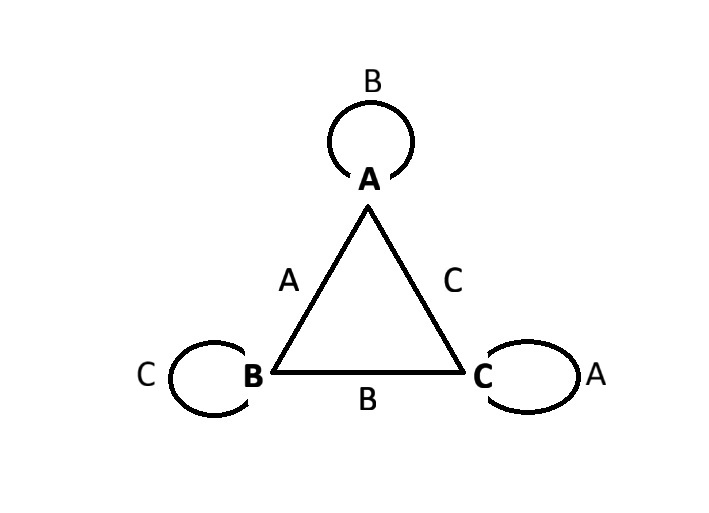}\\
\\
This operation is non-associative, so it is not a multiplication operation of a group, but it does form a rule (as in Def. \ref{biper_def}) for a bi-permutative cellular automaton $\tau:\{A,B,C\}^{\mathbb{N}_0}\rightarrow\{A,B,C\}^{\mathbb{N}_0}$. Let $\mu$ be a measure as in Theorem \ref{zero_ent_analysis} for this $\tau$, which is ergodic under the joint action of $\sigma$ and $\tau$. We claim that the support of $\left(\pi_*\right)\mu$ is contained either in the set of points with only singletons in their components or is just the point $\left(\left\{ A,B,C\right\} ,\left\{ A,B,C\right\} ,\left\{ A,B,C\right\} ,\dots\right)$. Assume it is not, then it is contained in $Z_{\tau,2}$, but this cannot be since $Z_{\tau,2}$:

\[
\left\{ r\left(x,y\right)\,:\,x\in\left\{ A,B\right\} ,y\in\left\{ A,B\right\} \right\} =\left\{ A,B,C\right\} ,
\]
\[
\left\{ r\left(x,y\right)\,:\,x\in\left\{ A,C\right\} ,y\in\left\{ A,C\right\} \right\} =\left\{ A,B,C\right\} ,
\]
\[
\left\{ r\left(x,y\right)\,:\,x\in\left\{ B,C\right\} ,y\in\left\{ B,C\right\} \right\} =\left\{ A,B,C\right\} ,
\]
\[
\left\{ r\left(x,y\right)\,:\,x\in\left\{ A,B\right\} ,y\in\left\{ A,C\right\} \right\} =\left\{ A,B,C\right\} ,
\]
\[
\left\{ r\left(x,y\right)\,:\,x\in\left\{ A,B\right\} ,y\in\left\{ B,C\right\} \right\} =\left\{ A,B,C\right\} ,
\]
\[
\left\{ r\left(x,y\right)\,:\,x\in\left\{ A,C\right\} ,y\in\left\{ B,C\right\} \right\} =\left\{ A,B,C\right\} .
\]
\\\\

The next Proposition is a converse of the previous one (from analysis to synthesis).\\

\begin{theorem}
\label{zero_ent_synth}

Let $\tau:\Lambda^{\mathbb{N}_0}\rightarrow\Lambda^{\mathbb{N}_0}$ be a left permutative cellular automaton and let $\nu$ be a shift-invariant Borel probability measure on $Z_\tau$ that is also invariant relative to $\tau'$. Form a measure $\tilde{\mu}$ on $\left\{ \left(A,a\right)\,:\,A\subseteq\Lambda,\,a\in A\right\} ^{\mathbb{N}_0}$ by the requirement $\tilde{\mu}\left(\left[\left(A_{0},a_{0}\right),\dots,\left(A_{n},a_{n}\right)\right]\right) = \frac{\nu\left(\left[A_{0},\dots,A_{n}\right]\right)}{\left|A_{0}\right|\cdots\left|A_{n}\right|}$ and define $\mu = \left(\psi_2\right)_*\tilde{\mu}$. Then:\\
\textbf{(i):} $\mu$ is invariant relative to both $\sigma$ and $\tau$.\\
\textbf{(ii):} If $\tau$ is bi-permutative and $h_\nu\left( \sigma \right) = 0$ then $\left(\pi_{\mu}\right)_*\mu = \nu$. \footnote{The proof of this claim written bellow is similar to the argument that lies behind "the easily checked fact" phrase opening the proof of Theorem 4.2 in \cite{key-10}.}\\
\textbf{(iii):} If in addition to the premises of (ii),  $\nu$ is also ergodic relative to the joint action of the shift and $\tau'$ then $\mu$ is ergodic relative to the joint action of the shift and $\tau$.
\end{theorem}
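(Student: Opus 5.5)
We treat the three parts in order.

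For (i), shift-invariance of $\mu$ follows from shift-invariance of $\nu$. The cylinder formula for $\tilde\mu$ is compatible with the shift: summing over $(A_0,a_0)$ gives $\sum_{A_0}\nu([A_0,\dots,A_n])/|A_1|\cdots|A_n|$, which is the $\tilde\mu$-mass of the shifted cylinder. Consistency under adding the last coordinate is proved the same way, so Kolmogorov extension applies.

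For $\tau$-invariance, view $\tilde\mu$ as a fibered measure: first sample $(A_i)$ from $\nu$, then pick $a_i\in A_i$ independently and uniformly. The main step is to show that, for $\nu$-a.e. $(A_i)$, the pushforward under $\tau$ of the uniform product measure on $\prod A_i$ is the uniform product measure on $\prod (\tau'(A))_i$. This uses the definition of $Z_\tau$, namely $|(\tau'(A))_i|=|A_i|=k$. Fix the tail $a_{n+1},a_{n+2},\dots$, hence $a_{n+1}$. By left permutativity, $a_n\mapsto r(a_n,a_{n+1})$ is injective, so it maps $A_n$ onto a $k$-subset of $(\tau'A)_n$, which has exactly $k$ elements, hence bijectively. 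Inducting downward, the map on finitely many coordinates $(a_0,\dots,a_n)\mapsto(r(a_0,a_1),\dots,r(a_n,a_{n+1}))$ is a bijection $A_0\times\dots\times A_n\to (\tau'A)_0\times\dots\times(\tau'A)_n$ for each fixed tail. Hence the image of the uniform measure is uniform on each cylinder. Combined with $\tau'_*\nu=\nu$, this gives $\tau_*\mu=\mu$. Formally, compute $\mu(\tau^{-1}[b_0..b_n])$ by disintegrating over $\nu$ and over $a_{n+1}$. I expect this fiberwise bijectivity, and checking that $\psi_2$ intertwines $\tau$ with the induced map $(A,a)\mapsto(\tau'A,\tau a)$, to be the main technical point, though routine.

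For (ii), the goal is to show that $\Lambda_{\mu,x}=A_0$ a.s. under the identification $x=\psi_2(A,a)$. The first step is that $(A_i)$ is $\mu$-a.s. a function of $x$, i.e.\ $\psi_2$ is measure-theoretically injective. Here $h_\nu(\sigma)=0$ is used: $A_0,\dots,A_{n-1}$ are measurable with respect to $\sigma^{-n}$ of the $\nu$-$\sigma$-algebra. One argues that the Pinsker algebra of the extension carries the $A$-coordinates and that, given $(A_i)$, the $a_i$ are independent. Concretely, the conditional law of $a_0$ given $(a_i)_{i\ge1}$ and $(A_i)_{i\ge0}$ is uniform on $A_0$, and $A_0$ is determined by $(A_i)_{i\ge1}$ by zero entropy. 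The $A$-process should then be recoverable from the tail of $a$, in the manner of the cited ``easily checked fact.'' I would attempt this via the Bernoulli-tail argument of Theorem~\ref{zero_ent_analysis}(ii): tail events of $a$ given $A$ are trivial, so $\sigma(a)$ contains the Pinsker algebra, which contains $A$.

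Granting this, $\mu^{\sigma^{-1}\mathcal B}_x$ is uniform on $A_0$, so $\Lambda_{\mu,x}=A_0$ and $(\pi_\mu)_*\mu=\nu$. Bi-permutativity enters in the step where $A$ is shown to be determined by the future of $a$, using the $\tau'$-structure to propagate the sets.

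For (iii), let $E$ be an invariant set for $\sigma$ and $\tau$ jointly. It lies in the Pinsker algebra of $\sigma$, since invariant sets have zero entropy. By (ii) and Theorem~\ref{zero_ent_analysis}(ii), this Pinsker algebra equals $\pi_\mu^{-1}$ of the Borel $\sigma$-algebra. So $E=\pi_\mu^{-1}F$, where $F$ is invariant under $\sigma$ and, by Theorem~\ref{zero_ent_analysis}(iii), under $\tau'$. Ergodicity of $\nu$ then gives $\mu(E)\in\{0,1\}$.
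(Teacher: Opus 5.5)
\textbf{Verdict: part (ii) has a genuine gap, and (iii) depends on it.} Your part (i) is the paper's argument. Left permutativity plus $|B_i|=|A_i|$ on $Z_\tau$ gives exactly one solution in $B_0\times\dots\times B_n$ for each $b_{n+1}\in B_{n+1}$. Your (iii) is a valid and somewhat more explicit alternative to the paper's ergodic-decomposition sketch, \emph{provided} (ii) holds.

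The gap is at what you call the first step of (ii): that the $A$-sequence is $\tilde\mu$-a.s.\ a function of $a=\psi_2(y)$. Your justification runs the inclusion the wrong way. The conditional Bernoulli-tail argument of Theorem~\ref{zero_ent_analysis}(ii) uses that the $a_i$ are independent given $A$. It shows that every event in the tail $\sigma$-algebra $\mathcal T(a)$ is $\sigma(A)$-measurable mod $\tilde\mu$, i.e.\ $\mathcal T(a)\subseteq\sigma(A)$. What you need is $\sigma(A)\subseteq\sigma(a)$. Knowing that $\sigma(A)$ lies in the Pinsker algebra of $(\tilde\mu,\sigma)$ (true, since $h_\nu(\sigma)=0$) does not help. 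Nothing places the Pinsker algebra of the extension inside the $a$-factor, and that is exactly the claim to be proved. The phrase about ``using the $\tau'$-structure to propagate the sets'' is not an argument, and it misplaces the role of bi-permutativity. In fact, injectivity of $\psi_2$ is a \emph{consequence} of (ii), via $\psi_1=\pi_\mu\circ\psi_2$ a.s., not something available beforehand.

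The missing idea is the paper's entropy sandwich, which avoids injectivity altogether.
\begin{enumerate}
\item Conditional on $(A,a_{\ge 1})$, the law of $a_0$ is uniform on $A_0$. So the conditional law given $a_{\ge1}$ alone is a mixture that a.s.\ charges every point of the realized $A_0$. Hence $A_0\subseteq\Lambda_{\mu,\psi_2(y)}$ a.s.
\item By (i) and Theorem~\ref{biper_th} (this is where bi-permutativity is used), $\mu^{\sigma^{-1}\mathcal B}_x$ is uniform on $\Lambda_{\mu,x}$. Hence $h_\mu(\sigma)=\mathbb E\log|\Lambda_{\mu,x}|$.
\item $h_{\tilde\mu}(\sigma)=H(A_0\mid (A_i,a_i)_{i\ge1})+H(a_0\mid A,a_{\ge1})\le h_\nu(\sigma)+\mathbb E\log|A_0|=\mathbb E\log|A_0|$.
\item Since $\mu$ is a factor of $\tilde\mu$, we get $\mathbb E\log|\Lambda_{\mu,x}|=h_\mu(\sigma)\le h_{\tilde\mu}(\sigma)\le\mathbb E\log|A_0|$. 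Together with $A_0\subseteq\Lambda_{\mu,x}$, this forces $A_0=\Lambda_{\mu,x}$ a.s.
\item Shift-equivariance then gives $\pi_\mu\circ\psi_2=\psi_1$ $\tilde\mu$-a.s., and so $(\pi_\mu)_*\mu=\nu$.
\end{enumerate}
The uniformity in step 2 is essential. Without it one only has $h_\mu(\sigma)\le\mathbb E\log|\Lambda_{\mu,x}|$, and the sandwich does not close.
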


\begin{proof}
\textbf{(i):} We define a map $\tilde{\tau}:\left\{ \left(A,a\right)\,:\,A\subseteq\Lambda,\,a\in A\right\} ^{\mathbb{N}_0}\rightarrow \left\{ \left(A,a\right)\,:\,A\subseteq\Lambda,\,a\in A\right\} ^{\mathbb{N}_0}$ as \[\left(\tilde{\tau}\left(  \left(A_0,a_0\right),\left(A_1,a_1\right),\left(A_2,a_2\right),\dots \right) \right)_i = \left(\left( \tau'\left(A_0,A_1,A_2,\dots \right) \right)_i, \left( \tau\left(a_0,a_1,a_2,\dots \right) \right)_i\right).\]

It suffices to prove that $\tilde{\mu}$ is invariant relative to both the shift and $\tilde{\tau}$. The invariance relative to the shift is immediate from the invariance of $\nu$ relative to its shift. To see the invariance of $\tilde{\mu}$  relative to $\tilde{\tau}$, suppose we are given a cylinder $\left[\left(A_0,a_0\right),\left(A_1,a_1\right),\dots \left(A_n,a_n\right)\right]$. Its $\tilde{\mu}$-probability is 
\[  \frac{\nu\left(\left[A_{0},\dots,A_{n}\right]\right)}{\left|A_{0}\right|\cdots\left|A_{n}\right|}. \] \\ 
By the invariance of $\nu$  relative to $\tau'$ this is equal to 
\[   \frac{\nu\left(\left(\tau'\right)^{-1}\left(\left[A_{0},\dots,A_{n}\right]\right)\right)}{\left|A_{0}\right|\cdots\left|A_{n}\right|} =  \sum_{  \{\left(B_0,\dots,B_{n+1}\right)\  :\, \left[B_0,\dots,B_{n+1}\right] \subseteq \left(\tau'\right)^{-1}\left(\left[A_{0},\dots,A_{n}\right]\right)\}} \frac{\nu \left( \left[B_0,\dots,B_{n+1}\right] \right)}{\left|A_{0}\right|\cdots\left|A_{n}\right|}  \] \\

\[ =   \sum_{  \{\left(B_0,\dots,B_{n+1}\right)\  :\, \left[B_0,\dots,B_{n+1}\right] \subseteq \left(\tau'\right)^{-1}\left(\left[A_{0},\dots,A_{n}\right]\right)\}} \frac{\nu \left( \left[B_0,\dots,B_{n+1}\right] \right)}{\left|B_{0}\right|\cdots\left|B_{n}\right|}  \]

\[ =   \sum_{ \{\left(B_0,\dots,B_{n+1}\right)\  :\, \left[B_0,\dots,B_{n+1}\right] \subseteq \left(\tau'\right)^{-1}\left(\left[A_{0},\dots,A_{n}\right]\right)\}} \frac{\left|B_{n+1}\right| \nu \left( \left[B_0,\dots,B_{n+1}\right] \right)}{\left|B_{0}\right|\cdots\left|B_{n+1}\right|}, \]

where we made use of the fact that $\left|A_0\right|=\left|B_0\right|,\dots,\left|A_n\right|=\left|B_n\right|$ that holds for any non-zero summand because $\nu$ is supported on $Z_\tau$.\\

By left permutativity, the number of elements in each $B_0\times\dots\times B_{n+1}$ that the automaton rule sends to $\left(a_0,\dots,a_n\right)$ is equal to $\left|B_{n+1}\right|$ (there is one such for every choice of element of $B_{n+1}$). Thus the last expression is exactly the $\tilde{\mu}$-probability of the pre-image of our cylinder under $\tilde{\tau}$.\\

\textbf{(ii):} The support of $\mu_{\psi_2\left(y\right)} ^{\sigma^{-1}\mathcal{B}}$, as a measure on $\Lambda$, contains $\tilde{\mu}$-almost-surely the set $\left(\psi_1\left(y\right)\right)_0$, i.e. $\left(\psi_1\left(y\right)\right)_0 \subseteq \left(\pi_\mu \left(\psi_2\left(y\right)\right)\right)_0 $ $\tilde{\mu}$-almost-surely. It is left to prove that these two sets are in fact $\tilde{\mu}$-almost-surely equal.\\

Now, $h_{\mu} \left(\sigma\right) = \mathbb{E}_{\tilde{\mu}} \log \left(\left|\left(\pi_\mu \left(\psi_2\left(y\right)\right)\right)_0\right|\right)$, and
$h_\nu \left(\sigma\right) = 0$ implies that also $ h_{\tilde{\mu}} \left(\sigma\right) = \mathbb{E}_{\tilde{\mu}} \log \left(\left|\left(\psi_1\left(y\right)\right)_0\right|\right)$. Combining these equalities with the inequality $h_{\mu} \left(\sigma\right) \leq h_{\tilde{\mu}} \left(\sigma\right)$ yields
\[\mathbb{E}_{\tilde{\mu}} \left(\left|\left(\pi_\mu \left(\psi_2\left(y\right)\right)\right)_0\right|\right) \leq \mathbb{E}_{\tilde{\mu}} \left(\left|\left(\psi_1\left(y\right)\right)_0\right|\right).\]

Since $\left|\left(\pi_\mu \left(\psi_2\left(y\right)\right)\right)_0\right|\geq \left|\left(\psi_1\left(y\right)\right)_0\right|$ holds $\tilde{\mu}$-almost-surely, we deduce that  $\left|\left(\pi_\mu \left(\psi_2\left(y\right)\right)\right)_0\right| = \left|\left(\psi_1\left(y\right)\right)_0\right|$ holds $\tilde{\mu}$-almost-surely. Thus we obtain the equality of the two sets stated above.\\

\textbf{(iii):} Applying an ergodic decomposition to $\mu$ with respect to the joint action of the shift and $\tau$ - and considering Theorem \ref{zero_ent_analysis} (i) and the assumed ergodicity of $\nu$ - the ergodic components must $\mu$-almost-surely be equal to $\mu$.

\end{proof}

\begin{remark}
\label{converse_rem_to_synthesis_thrm}
We do not know whether given $\nu$ as
in Theorem \ref{zero_ent_synth} (iii) that is already ergodic relative to $\tau'$ guarantees that $\mu$ is already ergodic relative to $\tau$. 
\end{remark}

The next corollary more or less sums up the last two theorems.\\

\begin{corollary}
\label{1-1_cor}

Let $\tau:\Lambda^{\mathbb{N}_0}\rightarrow\Lambda^{\mathbb{N}_0}$ be a bi-permutative cellular automaton. Then there exists a bijective correspondence between the collection of shift-invariant Borel probability measures $\mu$ on $\Lambda^{\mathbb{N}_0}$ that are also invariant relative to $\tau$, ergodic relative to the joint action of the shift and $\tau$ and satisfy $h_{\left(\pi_{\mu}\right)_*\mu}\left(\sigma\right) = 0$, and the collection of shift-invariant Borel probability measures $\nu$ on $Z_\tau$ that are also invariant relative to $\tau'$, ergodic relative to the joint action of the shift and $\tau'$ and satisfy $h_{\nu}\left(\sigma\right) = 0$.

\end{corollary}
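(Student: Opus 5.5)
The bijection will be $\Phi:\mu\mapsto(\pi_\mu)_*\mu$, with inverse the synthesis $\Psi:\nu\mapsto(\psi_2)_*\tilde{\mu}$ of Theorem \ref{zero_ent_synth}. I will check that each map lands in the other collection and that the two compositions are identities.

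\emph{$\Phi$ is well defined.} Take $\mu$ in the first collection and set $\nu=(\pi_\mu)_*\mu$. It is shift-invariant because $\pi_\mu$ is shift-equivariant. By Theorem \ref{zero_ent_analysis} (iii), $\pi_\mu\circ\tau=\tau'\circ\pi_\mu$ measure-theoretically, so $\nu$ is $\tau'$-invariant. Since $\nu$ is a factor of $\mu$ under this equivariant map, it is ergodic for the joint action of $\sigma$ and $\tau'$. Its shift entropy vanishes by assumption. By Theorem \ref{zero_ent_analysis} (iv) and the discussion after the definition of $Z_{\tau,k}$, $\nu$ is supported on $Z_{\tau,k}\subseteq Z_\tau$ for some $k$.

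\emph{$\Psi$ is well defined.} Take $\nu$ in the second collection and let $\mu=(\psi_2)_*\tilde{\mu}$. Theorem \ref{zero_ent_synth} (i) gives invariance under $\sigma$ and $\tau$, (iii) gives joint ergodicity, and (ii) gives $(\pi_\mu)_*\mu=\nu$, so $h_{(\pi_\mu)_*\mu}(\sigma)=h_\nu(\sigma)=0$. Thus $\mu$ lies in the first collection, and (ii) also shows $\Phi\circ\Psi=\mathrm{id}$.

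\emph{$\Psi\circ\Phi=\mathrm{id}$.} Start from $\mu$ in the first collection with $\nu=(\pi_\mu)_*\mu$. By Theorem \ref{zero_ent_analysis} (i), $(\varphi_\mu)_*\mu$ equals the measure whose cylinder values are $\nu([A_0,\dots,A_n])/(|A_0|\cdots|A_n|)$. This is exactly the $\tilde{\mu}$ built from $\nu$ in Theorem \ref{zero_ent_synth}. Since $\psi_2\circ\varphi_\mu=\mathrm{id}$, we get $\Psi(\nu)=(\psi_2)_*(\varphi_\mu)_*\mu=\mu$. Together with the previous step, $\Phi$ is a bijection with inverse $\Psi$.

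The only delicate point is the ergodicity of $\nu$, which I expect to be the main obstacle. If $E$ is a $\sigma$- and $\tau'$-invariant set, then $\pi_\mu^{-1}E$ is $\sigma$-invariant and, modulo $\mu$, $\tau$-invariant, because $\pi_\mu^{-1}(\tau')^{-1}E=\tau^{-1}\pi_\mu^{-1}E$ $\mu$-a.s. This a.s. identity is exactly Theorem \ref{zero_ent_analysis} (iii), so the step should go through once that intertwining is used carefully. Joint ergodicity of $\mu$ then gives $\nu(E)\in\{0,1\}$.
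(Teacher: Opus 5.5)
Your proposal is correct and is exactly the argument the paper intends: the corollary is stated there only as a summary of Theorems \ref{zero_ent_analysis} and \ref{zero_ent_synth}, with $\mu\mapsto(\pi_\mu)_*\mu$ and the synthesis $\nu\mapsto(\psi_2)_*\tilde{\mu}$ as mutually inverse maps. You also supply details the paper leaves implicit: ergodicity of the factor measure via the intertwining in part (iii), support in $Z_\tau$ via part (iv) and $\tau'$-invariance, and $\Psi\circ\Phi=\mathrm{id}$ from $\psi_2\circ\varphi_\mu=\mathrm{id}$.
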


\subsection{Group Multiplication Cellular Automata}

We now consider the special case of a group multiplication cellular
automaton $\tau_G$ for a finite group $G$. Here we can add on the content of Theorem \ref{zero_ent_analysis}.\\

\begin{theorem}
\label{zero_ent_group_thrm}

Let $G$ be a finite group. If $\mu$ is a shift-invariant Borel probability measure which is
also invariant relative to $\tau_G$, ergodic relative to the joint action and satisfies $h_{\left( \pi_{\mu} \right)_{*}\mu}\left(\sigma\right)=0$, then there exists a subgroup $H_\mu \leq G$ so that the support of $\left( \pi_{\mu} \right)_{*}\mu$ is contained
in $\left\{ H_\mu g\,:\,gH_\mu g^{-1}=H_\mu \right\} ^{{\mathbb{N}_0}}$.\\

\end{theorem}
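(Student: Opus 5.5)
We combine Theorem \ref{group_th} with Theorem \ref{zero_ent_analysis}. By Theorem \ref{group_th}, $\mu$-almost-surely $(\pi_\mu(x))_0=\Lambda_{\mu,x}$ is a right coset $H_{\mu,x}x_0$, where $H_{\mu,x}$ is $\tau_G$-invariant and Pinsker-measurable. Applying this at every index via shift-equivariance, $\mu$-almost-surely each component $(\pi_\mu(x))_i = H_{\mu,\sigma^i x}\,x_i$ is a right coset of the subgroup $H_{\mu,\sigma^i x}$.

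\textbf{Step 1: constancy of the subgroup.} Under $h_{(\pi_\mu)_*\mu}(\sigma)=0$, Theorem \ref{zero_ent_analysis}(iii) gives $\pi_\mu\circ\tau_G=\tau_G'\circ\pi_\mu$ a.s. So if $(\pi_\mu(x))_0=H x_0$ and $(\pi_\mu(x))_1=K x_1$, with $H=H_{\mu,x}$, $K=H_{\mu,\sigma x}$, then
\[ H_{\mu,\tau_G x}\,x_0x_1=(\pi_\mu(\tau_G x))_0=Hx_0Kx_1 . \]
Since $H_{\mu,\tau_G x}=H_{\mu,x}=H$ by Theorem \ref{group_th}, we get $Hx_0Kx_1=Hx_0x_1$, i.e. $x_0Kx_0^{-1}\subseteq H$. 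By Theorem \ref{zero_ent_analysis}(iv) the coset sizes along the orbit agree, so $|K|=|H|$ and hence $x_0Kx_0^{-1}=H$. Thus $H_{\mu,\sigma x}$ is conjugate to $H_{\mu,x}$. The conjugacy class of $H_{\mu,x}$ is then a $\sigma$- and $\tau_G$-invariant function, hence constant by joint ergodicity. I will fix one representative $H_\mu$ of this class.

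\textbf{Step 2: the normalizer condition.} The statement needs each component to be a coset $H_\mu g$ with $gH_\mu g^{-1}=H_\mu$. Since $Hg=g(g^{-1}Hg)$, a right coset of a conjugate $g_1H_\mu g_1^{-1}$ equals a left coset of $H_\mu$ only in special cases, so the real content is that all the subgroups $H_{\mu,\sigma^i x}$ are equal to a single $H_\mu$ and that the coset representatives normalize it. From Step 1, $H_{\mu,\sigma x}=x_0^{-1}H_{\mu,x}x_0$. To show $x_0$ normalizes $H_{\mu,x}$, I would also use the right-permutative direction. Applying $\tau_G'$ repeatedly gives $(\pi_\mu(\tau_G^n x))_0 = Hx_0 H_{\mu,\sigma x}x_1\cdots$, and by $\tau_G$-invariance the subgroup stays $H$, which constrains products of conjugates. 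Alternatively, I would exploit that $h_{(\pi_\mu)_*\mu}(\sigma)=0$, so by Lemma \ref{zero_ent} the component $(\pi_\mu(x))_0$ is measurable with respect to $\sigma^{-k}\mathcal{B}$ for every $k$. Varying $x_0$ within its coset $Hx_0$, which is allowed because the conditional measure is uniform on that coset, leaves $(\pi_\mu(x))_1=H_{\mu,\sigma x}x_1$ unchanged. Hence $h x_0$ must also conjugate $H$ to $H_{\mu,\sigma x}$ for every $h\in H$. This is automatic, so more information is needed: I would compare $x$ with a point $x'$ differing at index $1$ inside $Kx_1$, which keeps $\pi_\mu$ fixed, and use the $\tau_G$-image relation to force $x_0^{-1}Hx_0 = H$.

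\textbf{Main obstacle.} The main obstacle is Step 2: upgrading ``conjugate subgroups along the orbit'' to ``one subgroup $H_\mu$ normalized by the representatives''. My first attempt will be to use that $H_{\mu,x}$ is a Pinsker-measurable, $\tau_G$-invariant function which, by Theorem \ref{zero_ent_analysis}(ii), factors through $\pi_\mu(x)$, and then to show it is $\sigma$-invariant. That would make $H_{\mu,x}$ constant on joint-ergodic components, equal to some $H_\mu$, and then the relation $H_{\mu,\sigma x}=x_0^{-1}H_\mu x_0$ would give exactly $gH_\mu g^{-1}=H_\mu$ for every component $H_\mu g$.
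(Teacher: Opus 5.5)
Your Step 1 is correct. Combining Theorem \ref{zero_ent_analysis}(iii) with the $\tau_G$-invariance of $H_{\mu,x}$ gives $x_0Kx_0^{-1}=H$, where $H=H_{\mu,x}$ and $K=H_{\mu,\sigma x}$. But there is a genuine gap in Step 2, which you yourself flag as the main obstacle. Step 2 carries the entire content of the theorem, and none of the routes you sketch closes it. Step 1 alone only says that the subgroups along the orbit are conjugate, which is strictly weaker than the statement. Varying $x_0$ inside $Hx_0$ gives nothing, as you note. Comparing $x$ with $x'=(x_0,kx_1,x_2,\dots)$, $k\in K$, under a single application of $\tau_G$ only returns $x_0kx_0^{-1}\in H$, which is the Step 1 relation again. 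And knowing that $H_{\mu,x}$ factors through $\pi_\mu$ does not by itself give $\sigma$-invariance. Note also that no comparison at index $0$ can directly produce your target $x_0^{-1}Hx_0=H$, because there is no index $-1$ in $\mathbb{N}_0$ to feed it.

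The missing idea, which is essentially the paper's argument, is to apply your Step 1 identity also at time $1$, i.e.\ to the point $y=\tau_G x$, which is again $\mu$-typical. Here $H_{\mu,y}=H$. Since $\sigma$ and $\tau_G$ commute, $H_{\mu,\sigma y}=H_{\mu,\tau_G\sigma x}=K$. Also $y_0=x_0x_1$. So you get $(x_0x_1)K(x_0x_1)^{-1}=H=x_0Kx_0^{-1}$, hence $x_1Kx_1^{-1}=K$: the representative at index $1$ normalizes the subgroup at index $1$. Because $\mu$ is $\sigma$-invariant, the set $\{x : x_0\in N_G(H_{\mu,x})\}$ has the same measure as its $\sigma$-preimage, so it has full measure. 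Step 1 then gives $H_{\mu,\sigma x}=x_0^{-1}H_{\mu,x}x_0=H_{\mu,x}$ almost surely. Thus $H_{\mu,x}$ is invariant under both $\sigma$ and $\tau_G$, so by joint ergodicity it equals a constant $H_\mu$ almost surely. Every component $H_\mu x_i$ of $\pi_\mu(x)$ then satisfies $x_iH_\mu x_i^{-1}=H_\mu$. Your remark that iterating $\tau_G'$ ``constrains products of conjugates'' points in this direction, but without extracting this specific consequence and the shift-invariance transfer, the proposal does not prove the theorem.
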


\begin{remark}
    This implies that measure-theoretically $\tau_{N_G \left(H_\mu \right) / H_\mu}\circ \pi_\mu = \pi_\mu \circ \tau_G$, where $N_G \left(H_\mu \right)$ is the normalizer of $H_\mu$ in $G$.\\
\end{remark}

\begin{proof}
The support of $\left(\pi_\mu\right)_*\mu$ is contained in $Z_{\tau_{G},k}$ for some integer $1\leq k\leq \left| \Lambda \right|$. So a typical point has in all of its components only right cosets of size $k$.\\

Suppose we are given such a typical point. There necessarily exists  a subroup $H_\mu \leq G$ whose right cosets appear in an infinite set of indices in this point. Assume the point has in three consecutive indices cosets $H_\mu g_{1},Ug_{2},Kg_{3}$
of three subgroups $H_\mu ,U,K\leq G$ of size $k$.\\

$H_\mu g_{1}Ug_{2}$ must also be a right coset of $H_\mu $ (as $H_{\mu,x}$ is $\tau_G$-invariant). Thus $H_\mu g_{1}Ug_{2}=H_\mu g_{1}g_{2}$ which in turn is equivalent to $H_\mu g_{1}Ug_{1}^{-1}=H_\mu $. This is true if and only if $g_{1}Ug_{1}^{-1}\subseteq H_\mu $, and the latter condition is equivalent to $g_{1}Ug_{1}^{-1} = H$ by the the assumption $\left|H_\mu\right|=\left|U\right|$.\\

Applying the multiplication automaton element-wise we obtain $H_\mu g_{1}Ug_{2}=H_\mu g_{1}g_{2}$
in first of the components and $Ug_{2}g_{3}$ in the next one. Thus, by repeating the argument from the last paragraph, we obtain $g_{2}^{-1}g_{1}^{-1}Ug_{1}g_{2}=H_\mu $ which implies $g_{2}^{-1}H_\mu g_{2}=H_\mu $. Thus it can be shown inductively that $gH_\mu g^{-1}=H_\mu $ for all elements $g$ belonging to any coset that appears as a component somewhere after those $H_\mu g_{1},Ug_{2}$ in the typical point we started with. Take a coset of $H_\mu$ appearing after those $H_\mu g_{1},Ug_{2}$ we started with whose left-most neighbor is a coset  $U'g'$ of some subgroup $U'$ of the same size. Then on the one hand $g'H_\mu \left(g'\right)^{-1}=H_\mu $
but on the other hand $g'H_\mu \left(g'\right)^{-1}=U'$. Thus $H=U'$.\\

We have proved that from some index and onwards in our typical point appear only cosets of $H_\mu $ and also that they all belong to $\left\{ H_\mu g\,:\,g\in G,\,gH_\mu g^{-1}=H_\mu \right\} ^{\mathbb{Z}}$.
\end{proof}

\vspace{0.5cm}

So for group multiplication cellular automata, we arrived to a strengthened version of Corollary \ref{1-1_cor}.

\begin{corollary}
\label{group_cor}
Let $G$ be a finite group. Then there exists a bijective correspondence between the collection of shift-invariant Borel probability measures $\mu$ on $G^{\mathbb{N}_0}$  that are also invariant relative to $\tau_G$, ergodic relative to the joint action of the shift and $\tau_G$ and satisfy $h_{\left(\pi_{\mu}\right)_*\mu}\left(\sigma\right) = 0$, and the collection of shift-invariant Borel probability measures $\nu$ on any of the spaces $\left\{ Hg\,:\,gHg^{-1}=H\right\} ^{{\mathbb{N}_0}}$ - for subgroups $H\leq G$ -  that are also invariant relative to $\tau_G'$, ergodic relative to the joint action of the shift and $\tau_G'$ and satisfy $h_{\nu}\left(\sigma\right) = 0$.

\end{corollary}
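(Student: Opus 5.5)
The plan is to obtain Corollary \ref{group_cor} by specializing Corollary \ref{1-1_cor} to $\tau=\tau_G$ and then identifying the target collection with the union over $H$ of the collections in the statement.

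\emph{Forward direction.} Let $\mu$ be on the left side. Corollary \ref{1-1_cor} gives $\nu=(\pi_\mu)_*\mu$. This $\nu$ is shift- and $\tau_G'$-invariant (Theorem \ref{zero_ent_analysis}(iii)), has zero entropy, is jointly ergodic as a factor of a jointly ergodic system, and is supported on $Z_{\tau_G}$. Theorem \ref{zero_ent_group_thrm} then puts its support inside $\{H_\mu g: gH_\mu g^{-1}=H_\mu\}^{\mathbb{N}_0}$. So $\nu$ belongs to the collection attached to $H=H_\mu$.

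\emph{Backward direction.} Fix $H\le G$ and let $\nu$ be as in the statement. I would first check that $\{Hg: g\in N_G(H)\}^{\mathbb{N}_0}\subseteq Z_{\tau_G,|H|}$. For $g_1,g_2\in N_G(H)$ we have
\[
\{ab: a\in Hg_1,\ b\in Hg_2\}=Hg_1Hg_2=Hg_1g_2 ,
\]
which is again a coset of $H$ by a normalizing element. Hence $\tau_G'$ acts on this space as $\tau_{N_G(H)/H}$ componentwise. All iterates stay in the space, so all components keep size $|H|$. Therefore $\nu$ is a measure on $Z_{\tau_G}$ of the kind required in Corollary \ref{1-1_cor}, and Theorem \ref{zero_ent_synth} produces the corresponding $\mu$ with $(\pi_\mu)_*\mu=\nu$.

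\emph{Bijectivity.} This is inherited from Corollary \ref{1-1_cor}. Measures supported on the spaces for different $H$ are distinguished by their supports, so the union over $H$ introduces no collisions. The correspondence is well defined in both directions because $\mu\mapsto(\pi_\mu)_*\mu$ and the synthesis of Theorem \ref{zero_ent_synth} are mutually inverse. This uses Theorem \ref{zero_ent_analysis}(i) and Theorem \ref{zero_ent_synth}(ii).

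\emph{Main obstacle.} The delicate step is the forward direction, in applying Theorem \ref{zero_ent_group_thrm}. Its proof only shows that the cosets of $H_\mu$ appear from some index onward in a typical point. I would upgrade this to every index. The set of points whose components are all normalizing $H_\mu$-cosets from index $n$ on increases with $n$ and has full measure in the limit. Shift invariance then makes each of these sets have the same measure as the set for $n=0$. Since $\sigma^{-1}$ of the set for $n$ is the set for $n+1$, all of them have full measure. The remaining check is that the subgroup $H_\mu$ found there is almost surely the same, which follows from joint ergodicity.
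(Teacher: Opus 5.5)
Your proposal is correct and takes the route the paper intends. The paper gives no separate proof: the corollary is obtained by specializing Corollary \ref{1-1_cor} to $\tau_G$ and combining it with Theorem \ref{zero_ent_group_thrm}. Your extra checks are sound and fill in details the paper leaves implicit: the inclusion $\{Hg:\,gHg^{-1}=H\}^{\mathbb{N}_0}\subseteq Z_{\tau_G,|H|}$ via $Hg_1Hg_2=Hg_1g_2$, and the shift-invariance plus joint-ergodicity argument that upgrades the ``from some index onward'' conclusion in the proof of that theorem to every index, with a single subgroup.
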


\begin{remark}
\label{group_conv_rem}
Recall Example \ref{kitchens}. If we are given a shift-invariant Borel probability
measure on $\left(\mathbb{Z}/2\mathbb{Z}\right)^{{\mathbb{N}_0}}$ which is also invariant relative to the group multiplication cellular automaton (the Ledrappier cellular automaton), ergodic relative to the joint action of the two maps, and possesses zero entropy relative to each of them, then we can construct a such a measure on $\left(D_{p}\right)^{{\mathbb{N}_0}}$ except that it has positive entropy relative to each of the two maps. This latter measure is always atom-less. However notice that by Remark \ref{converse_rem_to_synthesis_thrm} we do not know whether ergodicity already relative to just the Ledrappier cellular automaton has to imply ergodicity of the measure on $\left(D_{p}\right)^{{\mathbb{N}_0}}$ already relative to just the group cellular automaton.

\end{remark}

Thus we have reduced the problem to the zero entropy case of a group multiplication cellular automaton. However, it is yet unknown even whether such atom-less measures exist.\\

\section{Generalizing the Results Beyond Bi-Permutative Cellular Automata}
\label{sect_RLP}

We now generalize our results to a larger class of cellular automata and two-dimensional subshifts that we shall refer to as \textit{RLP subshifts}.\\

The situation is rather peculiar in that we have only one type of example of a system that is proven to belong to the RLP class but is not a bi-permutative cellular automata. However, this class might contain many more systems.\\

\subsection{RLP Subshifts}

We begin with a few definitions that will lead our way to defining RLP subshifts of the two-dimensional shift system $\Lambda^{{\mathbb{N}_0}^2}$ (RLP stands for reciprocal and left permutative).\\

The definitions \ref{generation_def} and \ref{LP_def} are formulated for the vector $\left(1,0\right)$, but we shall assume similar definitions also for $\left(0,1\right)$.\\

\underline{Convention:} For simplicity, let us agree that when we refer to any two-dimensional subshift $W$ (of some alphabet) with index set ${\mathbb{N}_0}^2$, we shall always implicitly assume that the maps $\sigma_{\left(1,0\right)}:W\rightarrow W$ and $\sigma_{\left(0,1\right)}:W\rightarrow W$ are onto.\\

\begin{definition}
\label{generation_def}

    Given a two-dimensional subshift $W\subseteq \Lambda^{{\mathbb{N}_0}^2}$ we shall say that $\left(1,0\right)$ \textit{generates} $W$ if the values of a point at the indices $\{\left(m,0\right)\}_{m\in {\mathbb{N}_0}}$ determine the values at the rest of the indices. We shall say that $\left(1,0\right)$ \textit{almost-generates} $W$ if $W$ is uncountable and the just mentioned property is true outside of some countable set of points. And if an invariant measure on $W$ is involved, we shall say that $\left(1,0\right)$ \textit{measure-theoretically-generates} $W$ if this is true outside of a set of zero measure.\\
\end{definition}

  If $W$ is almost-generated by $\left(1,0\right)$ then such a countable set can be taken to be the set of all points with a zero row shared with some other point. Let us denote this set by $C_W$. \\

The next proposition adds clarity to the various generation definitions above.\\

\begin{proposition}
\label{generation_continuity}

Let $W\subseteq \Lambda^{{\mathbb{N}_0}^2}$ be a two-dimensional subshift. And denote by $W_0 \subseteq \Lambda^{\mathbb{N}_0}$ the projection of the points of $W$ onto their values on row zero.\\
\textbf{(i):} If $W$ is generated by $\left(1,0\right)$ then the function  $W_0\rightarrow \Lambda$, in which the values at the indices $\{\left(m,0\right)\}_{m\in {\mathbb{N}_0}}$ of a point in $W$ determine the value in $\left(0,1\right)$, is continuous.\\
\textbf{(ii):} If $W$ is almost-generated then that function is continuous when defined on $\left(W \setminus C_W\right)_0$.\\
\end{proposition}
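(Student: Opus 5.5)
The plan is a compactness argument. We deduce continuity of the determining function from the closedness of $W$ in the compact space $\Lambda^{{\mathbb{N}_0}^2}$ and the closed graph principle for maps into a finite discrete set. Denote by $f:W_0\rightarrow\Lambda$ the function in question. It sends a row-zero sequence $\left(w_{m,0}\right)_{m}$ of a point $w\in W$ (in case (ii), of a point $w\in W\setminus C_W$) to $w_{0,1}$. In case (i) $f$ is well defined by the generation assumption. In case (ii) it is well defined on $\left(W\setminus C_W\right)_0$ by the definition of $C_W$: a point outside $C_W$ shares its zero row with no other point of $W$. Note also that if the row of $w$ lies in $\left(W\setminus C_W\right)_0$, then every point of $W$ with that row equals $w$.

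\textbf{(i):} Suppose $f$ is not continuous at some $u\in W_0$. Since $\Lambda$ is finite, there is a sequence $u^{(n)}\rightarrow u$ in $W_0$ with $f\left(u^{(n)}\right)=b\neq f(u)$ for all $n$, after passing to a subsequence. Pick $w^{(n)}\in W$ with zero row $u^{(n)}$, so that $w^{(n)}_{0,1}=b$. By compactness of $\Lambda^{{\mathbb{N}_0}^2}$ and closedness of $W$, a subsequence converges to some $w\in W$. Convergence is coordinatewise, so the zero row of $w$ is $u$ and $w_{0,1}=b$. Generation then gives $f(u)=w_{0,1}=b$, which is a contradiction.

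\textbf{(ii):} We run the same argument with $u\in\left(W\setminus C_W\right)_0$ and $u^{(n)}\in\left(W\setminus C_W\right)_0$. The limit point $w$ lies in $W$ and has zero row $u$. Since $u$ is the row of a point outside $C_W$, and such a point shares its row with no other point of $W$, the point $w$ is that unique point. Hence $f(u)=w_{0,1}=b$, again a contradiction. The only delicate point is exactly this one. The limit need not a priori avoid $C_W$, but uniqueness of the point over $u$ makes this harmless.
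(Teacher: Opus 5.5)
Your proof is correct and is essentially the paper's argument. The paper shows that the row-zero projection $W\to W_0$ (resp.\ its restriction to $W\setminus C_W$, a union of fibers) is a continuous closed bijection, hence a homeomorphism, and composes its inverse with $\sigma_{(0,1)}$ and evaluation. You unwind the same compactness-plus-unique-lift reasoning sequentially, and your identification of the limit point with the unique point over $u$ in (ii) plays the role of the paper's observation that $C_W$ is a union of fibers.
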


\begin{proof}
\textbf{(i):} The projection $W \rightarrow W_0$ of the values of a point in $W$ onto its values at $\{\left(m,0\right)\}_{m\in {\mathbb{N}_0}}$ is one-to-one, and as a closed map this is a homeomorphism. Thus the continuous map $\sigma_{\left(0,1\right)}:W\rightarrow W$ induces a continuous map $W_0\rightarrow W_0$. On this map we need only compose the evaluation function at index $0$, which is continuous, to obtain our function.\\

\textbf{(ii):} $C_W$ is a union of fibers of the projection mentioned in (i), thus the restriction of that projection to $W\setminus C_W$ is also a closed map on its image, and thus a homeomorphism. The proof continues as that of (i).\\

\end{proof}

Prop. \ref{generation_continuity} means that for every point (in case (ii), outside of the negligible set $\left(C_W\right)_0$) the value at $\left(1,0\right)$ is already determined by a cylinder of fixed values on row zero from index  $\left(0,0\right)$ up to some index  $\left(n,0\right)$ (with the points in the negligible set withdrawn from it).

 \begin{definition}
    \label{LP_def}
     Given a two-dimensional subshift $W\subseteq \Lambda^{{\mathbb{N}_0}^2}$, if it is generated by $\left(1,0\right)$, we shall say that the generation is carried out in a \textit{left permutative manner} if the function $\Lambda^{\mathbb{N}_0}\rightarrow \Lambda$ in which the values at the indices $\{\left(m,0\right)\}_{m\in {\mathbb{N}_0}}$ determine the value in $\left(0,1\right)$ changes value when the value at $\left(0,0\right)$ of every point in $W$ is changed. If $W$ is almost-generated by $\left(1,0\right)$ we shall say that the generation is done in a \textit{left permutative manner} if the above is true outside of $C_W$.
 \end{definition}

\begin{remark}
\label{LP_rem}
Notice that an alternative definition could be:
"Given a two-dimensional subshift $W\subseteq \Lambda^{{\mathbb{N}_0}^2}$, if it is generated by $\left(1,0\right)$, we shall say that the generation is carried out in a \textit{left permutative manner} if every two such cylinders, as described in the paragraph before the definition, that are identical except in their value at index $\left(0,0\right)$ determine different values at  $\left(0,1\right)$. And similarly for almost-generation with the negligible set $\left(C_W\right)_0$." 
This  definition is a priori stronger than the chosen one, and we shall not need here this additional strength.
\end{remark}

Let us denote by $\xi$ the partition of a two-dimensional subshift $W$ according to the value of the points at $\left(0,0\right)$.\\

\begin{definition}
We shall say that a two dimensional subshift $W\subseteq\Lambda^{{\mathbb{N}_0}^{2}}$ is \textit{reciprocal} if it is almost-generated by both $\left(1,0\right)$ and $\left(0,1\right)$ and satisfies the following property:

There exists some $0<a\in\mathbb{R}$ so that the maximal number of elements of $\vee _{i=0}^{\left\lfloor al\right\rfloor}\sigma_{\left(0,1\right)}^{-i}\mathcal{\xi}$
that any element of $\vee _{i=0}^{l}\sigma_{\left(1,0\right)}^{-i}\mathcal{\xi}$ intersects is asymptotically sub-exponential (i.e. $o\left(e^{\lambda l}\right)$ for all $0<\lambda\in\mathbb{R}$), and also that the maximal number of elements of $\vee _{i=0}^{l}\sigma_{\left(1,0\right)}^{-i}\mathcal{\xi}$
that any element of $\vee _{i=0}^{\left\lfloor al\right\rfloor}\sigma_{\left(0,1\right)}^{-i}\mathcal{\xi}$
intersects is asymptotically sub-exponential.\\

\end{definition}

A priori, it seems a very harsh requirement that exactly the same linear
function $al$ will hold for both directions. However, notice that if it does hold then $a$ must be the ratio of the topological entropies of the projections of $W$ onto the values on each of the two rays ${\mathbb{N}_0}\left(1,0\right)$ and ${\mathbb{N}_0}\left(0,1\right)$. This natural value of $a$ raises hope that reciprocity is not a very rare phenomenon.\\

The first statement of the following lemma generalizes an observation (and proof) made by D. Rudolph in \cite{key-9}.

\begin{lemma}
\label{recip_lemma}
    Let $W\subseteq\Lambda^{{\mathbb{N}_0}^{2}}$ be a two
dimensional reciprocal subshift. Then every Borel probability measure $\mu$ that is invariant relative to both $\sigma_{\left(1,0\right)}$ and $\sigma_{\left(0,1\right)}$
sastisfies $h_{\mu}\left(\sigma_{\left(1,0\right)},\xi\,|\,\mathcal{A}\right)=ah_{\mu}\left(\sigma_{\left(0,1\right)},\xi\,|\,\mathcal{A}\right)$ for every invariant sub-$\sigma$-algebra $\mathcal{A}$ (i.e. $\sigma_{\left(1,0\right)}^{-1}\mathcal{A}\overset{\mod\mu}{=}\sigma_{\left(0,1\right)}^{-1}\mathcal{A}\overset{\mod\mu}{=}\mathcal{A}$), this is true also for the natural invertible extension, and the Pinsker $\sigma$-algebra of $\sigma_{\left(1,0\right)}$ is equal to that of $\sigma_{\left(0,1\right)}$.
\end{lemma}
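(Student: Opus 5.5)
The plan is to imitate Rudolph's comparison of the two partition refinements, now carried out conditionally on $\mathcal{A}$, and then to repeat the Pinsker argument of Lemma \ref{pinsker_lemma2} with the factor $a$ inserted.

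\textbf{Step 1: the entropy identity.} Write $\xi^l_{(1,0)}=\vee_{i=0}^{l}\sigma_{(1,0)}^{-i}\xi$ and $\xi^{m}_{(0,1)}=\vee_{i=0}^{m}\sigma_{(0,1)}^{-i}\xi$ with $m=\lfloor al\rfloor$. For any partitions $\alpha,\beta$,
\[
H_\mu(\alpha\,|\,\mathcal{A})\le H_\mu(\alpha\vee\beta\,|\,\mathcal{A})=H_\mu(\beta\,|\,\mathcal{A})+H_\mu(\alpha\,|\,\beta\vee\mathcal{A}).
\]
The last term is at most $\log N(l)$, where $N(l)$ bounds how many atoms of $\alpha$ meet a single atom of $\beta$. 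Apply this with $(\alpha,\beta)=(\xi^{l}_{(1,0)},\xi^{m}_{(0,1)})$ and with the roles swapped. Reciprocity gives $\log N(l)=o(l)$. Dividing by $l$ and letting $l\to\infty$, the conditional entropies $h_\mu(\sigma_{(1,0)},\xi\,|\,\mathcal{A})=\lim\frac1l H_\mu(\xi^l_{(1,0)}\,|\,\mathcal{A})$ and $a\cdot\lim\frac1m H_\mu(\xi^m_{(0,1)}\,|\,\mathcal{A})$ coincide. The invariance of $\mathcal{A}$ under both shifts is what makes these limits the conditional entropies in the usual (subadditive) sense.

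\textbf{Step 2: the natural extension.} The same counting passes to the natural invertible $\mathbb{Z}^2$-extension. The partitions there are pulled back from $W$ by the factor map, so the intersection counts are unchanged. An invariant sub-$\sigma$-algebra of the extension is allowed, since Step 1 never used that $\mathcal{A}$ lives on $W$.

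\textbf{Step 3: generating partitions.} To speak of $h_\mu(\sigma_{(1,0)}\,|\,\mathcal{A})$ rather than only the entropy relative to $\xi$, I need $\xi$ to generate under each direction measure-theoretically. Here I use almost-generation: outside the countable set $C_W$, row zero determines the whole point. A countable set is a union of atoms, and on its non-atomic part the measure is zero. On the atomic part, invariant atoms carry only periodic orbits, which have zero entropy. So the identity holds for full entropies, perhaps after separating off the atomic part of $\mu$, which is handled trivially.

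\textbf{Step 4: the Pinsker $\sigma$-algebras.} I copy the proof of Lemma \ref{pinsker_lemma2} in the natural extension. Let $\mathcal{P}=\mathcal{P}_{\sigma_{(1,0)}}$. It is invariant under $\sigma_{(0,1)}$ because $\sigma_{(0,1)}$ is an automorphism commuting with $\sigma_{(1,0)}$. Then
\[
a\,h(\sigma_{(0,1)}\,|\,\mathcal{P})=h(\sigma_{(1,0)}\,|\,\mathcal{P})=h(\sigma_{(1,0)})=a\,h(\sigma_{(0,1)}).
\]
Since $a>0$, the Abramov--Rokhlin formula gives $h(\sigma_{(0,1)}\text{ on the factor }\mathcal{P})=0$, so $\mathcal{P}\subseteq\mathcal{P}_{\sigma_{(0,1)}}$. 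The reverse inclusion follows by symmetry, and one then pulls back to $W$ as in Lemma \ref{pinsker_lemma2}.

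\textbf{Main obstacle.} I expect the hardest step to be Step 3, passing from $\xi$-relative entropies to true entropies. This requires a careful argument that almost-generation, with the exceptional set $C_W$ countable, yields measure-theoretic generation for every invariant $\mu$, or at least for all of $\mu$ except a zero-entropy atomic part.
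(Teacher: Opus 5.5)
Your proposal is correct and follows essentially the same route as the paper. The shared steps are: the comparison $H_\mu(\alpha\,|\,\mathcal{A})\le H_\mu(\beta\,|\,\mathcal{A})+H_\mu(\alpha\,|\,\beta\vee\mathcal{A})$, with reciprocity bounding the last term by $o(l)$; discarding the zero-entropy atomic part so that almost-generation becomes measure-theoretic generation; and the Pinsker argument of Lemma \ref{pinsker_lemma2} run in the natural extension with the factor $a>0$ inserted. Your only reorganization is minor: you note that the $\xi$-relative identity holds for every invariant $\mu$ without any generation hypothesis, and you use generation only to pass to full relative entropies for the Pinsker step, which is slightly cleaner than where the paper invokes it.
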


\begin{proof}

If $\mu$ is composed only of atoms then the claim is trivial. Otherwise, it is only the continuous component of $\mu$ that influences the claim. So we may suppose without loss of generality that $\mu$ is atom-less and hence that the almost-generations are measure-theoretical-generations.

    Consider
\[
H_{\mu}\left(\vee_{i=0}^{l-1}\sigma_{\left(1,0\right)}^{-i}\xi\,|\,\mathcal{A}\right)\leq H_{\mu}\left(\left(\vee_{i=0}^{l-1}\sigma_{\left(1,0\right)}^{-i}\xi\right)\vee\left(\vee_{i=0}^{\left\lfloor al\right\rfloor -1}\sigma_{\left(0,1\right)}^{-i}\xi\right)\,|\,\mathcal{A}\right)
\]
\[
=H_{\mu}\left(\vee_{i=0}^{\left\lfloor al\right\rfloor -1}\sigma_{\left(0,1\right)}^{-i}\xi\,|\,\mathcal{A}\right)+H_{\mu}\left(\vee_{i=0}^{l-1}\sigma_{\left(1,0\right)}^{-i}\xi\,|\,\vee_{i=0}^{\left\lfloor al\right\rfloor -1}\sigma_{\left(0,1\right)}^{-i}\xi\vee\mathcal{A}\right)
\]

\[
\leq H_{\mu}\left(\vee_{i=0}^{\left\lfloor al\right\rfloor -1}\sigma_{\left(0,1\right)}^{-i}\xi\right)+\log\left(asymptotically-sub-exp\right)=H_{\mu}\left(\vee_{i=0}^{\left\lfloor al\right\rfloor -1}\sigma_{\left(0,1\right)}^{-i}\xi\right)+o\left(l\right).
\]
\\

By Dividing both sides by $l$ we get

$\frac{1}{l}H_{\mu}\left(\vee_{i=0}^{l-1}\sigma_{\left(1,0\right)}^{-i}\xi\,|\,\mathcal{A}\right)\leq\frac{al}{l}\frac{1}{al}H_{\mu}\left(\vee_{i=0}^{\left\lfloor al\right\rfloor -1}\sigma_{\left(0,1\right)}^{-i}\xi\,|\,\mathcal{A}\right)+\frac{1}{l}o\left(l\right)$,
and by letting $l\rightarrow\infty$ we conclude $h_{\mu}\left(\sigma_{\left(1,0\right)},\xi\,|\,\mathcal{A}\right)\leq ah_{\mu}\left(\sigma_{\left(0,1\right)},\xi\,|\,\mathcal{A}\right)$ - here we used the fact that the two almost-generations are measure-theoretical-generations.
The reverse inequality is obtained similarly.\\

The proof for the natural invertible extension of the two-dimensional subshift $W$ is identical to the above (in fact, the first claim could have been deduced from the invertible case).\\

As for the equality of the two Pinsker $\sigma$-algebras, the proof is similar to that in the proof of Lemma \ref{pinsker_lemma2}.
\end{proof}

This leads us to the definition of RLP subshifts.

\begin{definition}

We shall say that a two-dimensional subshift $W\subseteq \Lambda^{{\mathbb{N}_0}^2}$ is an \textit{RLP subshift} if it is reciprocal and the almost-generation by $\left(1,0\right)$ is done in a left permutative manner.
    
\end{definition}

We can now formulate the generalized version of Theorem \ref{biper_th}.

\begin{theorem}
\label{RLP_theorem}
    If  $W\subseteq \Lambda^{{\mathbb{N}_0}^2}$ is an RLP subshift and $\mu$ is an atom-less invariant Borel probability measure on it then the function $x \mapsto \mu _x ^{\sigma_{\left(1,0\right)}^{-1}\mathcal{B}}\left(\{x\}\right)$ (where $\mathcal{B}$ is the Borel $\sigma$-algebra of $W$) is $\sigma_{\left(0,1\right)}$-invariant, measurable with respect to the Pinsker $\sigma$-algebras of both maps $\sigma_{\left(1,0\right)}$ and $\sigma_{\left(0,1\right)}$ (which are equal), and when  $\mu _x ^{\sigma_{\left(1,0\right)}^{-1}\mathcal{B}}$ is considered as a probability measure on $\Lambda$ it is $\mu$-almost-surely the uniform probability on a subset $\Lambda_{\mu,x} \subseteq \Lambda$.
\end{theorem}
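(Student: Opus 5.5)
The plan is to mirror the proof of Theorem \ref{biper_th}, with $S=\sigma_{\left(1,0\right)}$, $T=\sigma_{\left(0,1\right)}$ and $\nu=\mu$, and to replace the bi-permutativity inputs by the RLP properties.

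\textbf{Step 1: invariance via Lemma \ref{parry_lemma}.} Since $\mu$ is atom-less, the almost-generation by $\left(1,0\right)$ is a measure-theoretical generation. Indeed, $C_W$ is countable and hence $\mu$-null; it may be discarded together with all its images and preimages under both shifts, which is still a countable null set. Outside this set a point of $W$ is determined by its row zero. Hence the fibre $\sigma_{\left(1,0\right)}^{-1}\left(\{x\}\right)$ consists of points that agree at all indices $\left(m,0\right)$ with $m\geq 1$, and each is determined by its value at $\left(0,0\right)$. So $\sigma_{\left(1,0\right)}$ is at most $\left|\Lambda\right|$-to-one.

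Next check the bijectivity hypothesis for $T=\sigma_{\left(0,1\right)}$. The map $\sigma_{\left(0,1\right)}$ sends the fibre over $x$ into the fibre over $\sigma_{\left(0,1\right)}(x)$ by commutation. The left permutative manner of generation gives injectivity: two points of the fibre differing at $\left(0,0\right)$ must differ at $\left(0,1\right)$. For surjectivity, use that both fibres are determined by the value at the origin. A fibre point over $\sigma_{\left(0,1\right)}(x)$ with prescribed value at $\left(0,0\right)$ should come from a fibre point over $x$. Here the cleanest route is to invoke Remark \ref{parry_remark}: it only requires almost-sure bijectivity between the sets of points of positive conditional measure. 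Injectivity together with a counting/measure argument (the fibre masses sum to $1$ on both sides and $\sigma_{\left(0,1\right)}$ pushes $\mu$ to $\mu$) should supply this. Lemma \ref{parry_lemma} then yields the $\sigma_{\left(0,1\right)}$-invariance of $x\mapsto\mu_x^{\sigma_{\left(1,0\right)}^{-1}\mathcal{B}}(\{x\})$.

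\textbf{Step 2: Pinsker measurability.} By the almost-generation by $\left(0,1\right)$, the partition $\xi$ generates under $\sigma_{\left(0,1\right)}$ modulo $\mu$. So Lemma \ref{pinsker_lemma1} applies to $\sigma_{\left(0,1\right)}$: the $\sigma$-algebra of $\sigma_{\left(0,1\right)}$-invariant sets lies in the Pinsker $\sigma$-algebra of $\sigma_{\left(0,1\right)}$, and hence our function is measurable with respect to that Pinsker $\sigma$-algebra. Lemma \ref{recip_lemma} then identifies this Pinsker $\sigma$-algebra with that of $\sigma_{\left(1,0\right)}$, using reciprocity. The Pinsker $\sigma$-algebra of $\sigma_{\left(1,0\right)}$ is contained in $\sigma_{\left(1,0\right)}^{-1}\mathcal{B}$, because $\xi$ generates under $\sigma_{\left(1,0\right)}$ modulo $\mu$ and the Pinsker algebra is contained in the tail. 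Therefore the function $y\mapsto\mu_y^{\sigma_{\left(1,0\right)}^{-1}\mathcal{B}}(\{y\})$ is constant on atoms of $\sigma_{\left(1,0\right)}^{-1}\mathcal{B}$. So the conditional measure gives equal weight to every point of positive weight in the atom, i.e. it is uniform on some $\Lambda_{\mu,x}$ when the atom is identified with values at $\left(0,0\right)$.

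\textbf{Main obstacle.} The delicate step is the surjectivity half of the fibre-bijectivity in Step 1. Left permutativity in the weak sense of Definition \ref{LP_def} gives only injectivity, and a point with a prescribed row zero need not exist in $W$. I would handle this through Remark \ref{parry_remark}, first trying to show directly that the positive-mass parts of the fibres correspond under $\sigma_{\left(0,1\right)}$. If that fails, the fallback is to work with the inequality version of Parry's computation: injectivity gives $L_S$ of the relevant function $\leq$ itself, and equality of integrals (from invariance) forces equality almost surely.
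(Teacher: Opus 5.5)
Your proposal is correct and follows the paper's proof. It uses Lemma \ref{parry_lemma} via Remark \ref{parry_remark} for the $\sigma_{(0,1)}$-invariance, then Lemmas \ref{pinsker_lemma1} and \ref{recip_lemma} for Pinsker measurability, and finally the inclusion of the Pinsker $\sigma$-algebra of $\sigma_{(1,0)}$ in $\sigma_{(1,0)}^{-1}\mathcal{B}$ for uniformity. The paper simply asserts that left permutative generation makes Remark \ref{parry_remark} applicable, so your worry about surjectivity is legitimate, and your fallback settles it. Take $w(y)=1/\mu_y^{\sigma_{(0,1)}^{-1}\mathcal{B}}(\{y\})$: injectivity gives $L_{\sigma_{(1,0)}}w\leq w$, and $\int w\,d\mu\leq\left|\Lambda\right|$ by almost-generation by $(0,1)$. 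Equality of integrals then forces $L_{\sigma_{(1,0)}}w=w$ almost surely, which is exactly what Parry's argument needs (equivalently, the bijectivity between positive-mass parts of the fibres).
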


\begin{proof}
Since $\mu$ is atom-less, almost-generation implies measure-theoretical-generation.\\

The fact that $W$ is measure-theoretically-generated by $\left(1,0\right)$ in a left permutative manner lets us apply Lemma \ref{parry_lemma} (by Remark \ref{parry_remark} following it). Hence $x \mapsto \mu _x ^{\sigma_{\left(1,0\right)}^{-1}\mathcal{B}}\left(\{x\}\right)$ is $\sigma_{\left(0,1\right)}$-invariant. By Lemma \ref{pinsker_lemma1} it is measurable relative to the Pinsker $\sigma$-algebra of $\sigma_{\left(0,1\right)}$ that by Lemma \ref{recip_lemma} is equal to that of $\sigma_{\left(1,0\right)}$.\\

   The last claim follows from the fact that measurability relative the Pinsker $\sigma$-algebra of $\sigma_{\left(1,0\right)}$ implies, in particular, measurability relative to $\sigma_{\left(1,0\right)}^{-1}\mathcal{B}$.
    
\end{proof}

Also for RLP subshifs we define $\pi_\mu$.

\begin{definition}
Given an RLP subshift $W$ together with an invariant Borel probability measure $\mu$ defined on it, denote $\Lambda_{\mu ,x}$ as in Theorem \ref{RLP_theorem}. We define  $\pi_{\mu} : W \rightarrow \left(2^\Lambda\right)^{{\mathbb{N}_0}}$ as the one-dimensional shift-equivariant map satisfying $\left( \pi_{\mu} \left( x \right) \right)_{0} = \Lambda_{\mu,x}$. We also define  $\pi_{\mu}' : W \rightarrow \left(2^\Lambda\right)^{{\mathbb{N}_0}^2}$ as the two-dimensional shift-equivariant map satisfying $\left( \pi_{\mu} \left( x \right) \right)_{\left(0,0\right)} = \Lambda_{\mu,x}$.\\
\end{definition}

\subsection{The Times 2 Times 3 Cellular Automaton}

Given two coprime positive integers $p$ and $q$ (such as $2$ and $3$) the invertible natural extension of the dynamical system of multiplication of the circle by $p$ and $q$ can be essentially symbolically coded as a full (i.e. also for negative space and time indices) space-time diagram of a cellular automaton in the alphabet $\left\{ 0,1,2,\dots,pq-1\right\} $. The cellular automaton, on the index set $\mathbb{N}$, is that of multiplying a fraction in the unit interval that is expanded in base $pq$ by $p$ - the rule for each index is then dependent on itself and its right neighbor. In its space-time diagram times $p$ is represented by $\sigma_{\left(0,1\right)}$ and times $q$ by $\sigma_{\left(1,-1\right)}$. This system was exploited in Rudolph's proof of his theorem (cf. \cite{key-9} or the survey \cite{key-6}). Let us denote it by $\tilde{X}$, and let us denote by $X$ its factor composed of restricting every point to the indices of the quadrant generated by $\left(0,1\right)$ and $\left(1,-1\right)$ and then changing coordinates linearly by transfering $\left(1,-1\right)\mapsto \left(1,0\right)$ and $\left(0,1\right)\mapsto \left(0,1\right)$ (so the full one-dimensional shift, representing the fraction in base $pq$ now corresponds to the values on the diagonal).\\

$X$ factors onto the circle with its multiplication maps $S_{p}$ and $S_{q}$ - multiplication by $p$ and $q$ respectively - by expanding in base $pq$ the digits that appear on the diagonal, that is $x\mapsto \sum_{i=0}^\infty x_{\left(i,i\right)}\left(pq\right)^{-i-1}$. By this relation to the circle it is seen that both $\sigma_{\left(1,0\right)}$ and $\sigma_{\left(0,1\right)}$ almost-generates $X$, where a countable set outside which there is generation can be taken to be the inverse image of the $pq$-adic fractions in the circle. Moreover, since $p$ and $q$ are coprime this is done in a left-permutative manner: $S_{p}|_{S_{q}^{-1}\left(\left\{ x\right\} \right)}:S_{q}^{-1}\left(\left\{ x\right\} \right)\rightarrow S_{q}^{-1}\left(\left\{ S_{p}\left(x\right)\right\} \right)$ is bijective and hence left-permutativity  holds (also the a priori stronger alternative definition appearing in Remark  \ref{LP_rem}  holds since any two non-empty cylinders determining the values at indices $\left(0,0\right),\left(1,0\right),\dots,\left(n,0\right)$ (for some $n>0$) and possessing identical values for the indices $\left(1,0\right),\dots,\left(n,0\right)$ have a non-empty intersection).\\

As Rudolph observed, $X$ is reciprocal (he did not use that term). This again is seen by considering the circle. Denote by $\alpha$ the partition $\left\{ \left[\frac{j}{pq},\frac{j+1}{pq}\right)\right\} _{j=0}^{pq-1}$. The lengths of the intervals composing the partition $\vee_{i=0}^{l-1}S_{p}^{-i}\left(\alpha\right)$
is $\frac{1}{p^{l}q}$, and of $\vee_{i=0}^{m-1}S_{q}^{-i}\left(\alpha\right)$ is $\frac{1}{pq^{m}}$. Choosing $m\left(l\right)=\left\lfloor \log_{q}p^{l-1}\right\rfloor =\left\lfloor \frac{\left(l-1\right)\log p}{\log q}\right\rfloor $ the length of the intervals composing $\vee_{i=0}^{m\left(l\right)-1}S_{q}^{-i}\left(\alpha\right)$
is between $\frac{1}{p^{l}q}$ and $\frac{1}{p^{l}}$, and hence none of which can intersect more than $q+1$ of the intervals composing $\vee_{i=0}^{l-1}S_{p}^{ i}\left(\alpha\right)$, and none of the latters can intersect more than $2$ of the formers.\\

Apart from this example, its generalization (to higher dimensional tori etc.) and bi-permutative cellular automata, there are no other proven examples of RLP subshifts in our disposal. Examining whether subshifts satisfy the reciprocity requirement seems a challenging task.

\subsection{The Case $h_{\left( \pi_{\mu} \right)_{*}\mu}\left(\sigma\right)=0$}

In this section we state the main generalized claims. The proofs are mostly similar to the original ones. When they are not we point out the differences.\\

A few definitions first.\\

We continue to denote the component-wise projection $\left(A,a\right)\mapsto A$ by $\psi_{1}: \{ \left(A,a\right)\,:\,A\subseteq\Lambda,\,a\in A\} ^{{\mathbb{N}_0}}\rightarrow\left(2^\Lambda\right)^{{\mathbb{N}_0}}$ and  $\left(A,a\right)\mapsto a$ by  $\psi_{2}:\left\{ \left(A,a\right)\,:\,A\subseteq\Lambda,\,a\in A\right\} ^{{\mathbb{N}_0}}\rightarrow \Lambda^{{\mathbb{N}_0}}$.\\

We also introduce new similar maps.\\

\begin{definition}
We denote by 

$\psi_{1}':\left\{ \left(A,a\right)\,:\,A\subseteq\Lambda,\,a\in A\right\} ^{{\mathbb{N}_0}^2}\rightarrow\left(2^\Lambda\right)^{{\mathbb{N}_0}^2}$
the projection $\left(A,a\right)\mapsto A$,

and by
$\psi_{2}':\left\{ \left(A,a\right)\,:\,A\subseteq\Lambda,\,a\in A\right\} ^{{\mathbb{N}_0}^2}\rightarrow \Lambda^{{\mathbb{N}_0}^2}$
the projection $\left(A,a\right)\mapsto a$.   \\   
\end{definition}

\begin{definition}
Given an RLP subshift $W$ with an atom-less invariant measure $\mu$, we denote by $\varphi_\mu$ the
map from $W$ to $\left\{ \left(A,a\right)\,:\,A\subseteq\Lambda,\,a\in A\right\} ^{{\mathbb{N}_0}}$
defined $\mu$-almost-surely as $\left(\varphi_\mu\left(x\right)\right)_{i}=\left(\left(\pi_{\mu}\left(x\right)\right)_{i,0},x_{i,0}\right)$.\\    
\end{definition}

\begin{definition}
    Given a two-dimensional subshift $W\subseteq \Lambda^{{\mathbb{N}_0}^2}$ that is almost-generated by $\left(1,0\right)$, consider the Borel subset of $\left(2^\Lambda \setminus \{\emptyset\} \right)^{{\mathbb{N}_0}^2}$ which is the set of all points $z\in \left(2^\Lambda  \setminus \{\emptyset\} \right)^{{\mathbb{N}_0}^2}$ satisfying that for every $a\in z_{i,j+1}$ there exists $x \in W\setminus \sigma_{\left(i,j\right)}^{-1} \left(C_W\right)$ so that $x_{i,j+1} = a$ and $x_{k,j}\in z_{k,j}$ for every $k\geq i$. We define $Z_W$ to be the collection of all points  $z$ of that Borel subset that \underline{also} satisfy the following two requirements:
    \begin{enumerate}
    \item that for any given $i\in {\mathbb{N}_0}$ all elements of the collection $\{ z_{i,j}\}_{i\in {\mathbb{N}_0}}$ are subsets of $\Lambda$ of the same size.
    \item $\psi_2'\left(\left(\psi_1'\right)^{-1}\left(\{z\}\right)\right) \subseteq W$.\\
    \end{enumerate}
\end{definition}

\begin{remark}
   \label{not_compact_gen_rem}
    Albeit not being compact notice that $\sigma_{\left(1,0\right)}\left(Z_W\right),\sigma_{\left(0,1\right)}\left(Z_W\right) \subseteq Z_W$, and hence also $\left(\psi_1'\right)^{-1}\left(Z_W\right)$ shares this invariance. Moreover,  the values of a point of $Z_W$ on row zero determine all the values above them, and for any $z\in Z_W$ the points of the set $\left(\psi_1'\right)^{-1}\left(\{z\}\right)$ also satisfy this property apart from a countable subset.\\
\end{remark}

\begin{theorem}
\label{RLP_zero_ent_analysis}
Given an RLP subshift $W$ with an atom-less invariant measure $\mu$, assume also $h_{\left( \pi_{\mu} \right)_{*}\mu}\left(\sigma\right)=0$. We define a Borel probability measure $\tilde{\mu}$ on $\{ \left(A,a\right)\,:\,A\subseteq\Lambda,\,a\in A\} ^{{\mathbb{N}_0}}$ by requiring that for a cylinder depending on the first values
\[\tilde{\mu}\left(\left[  \left(A_{0},r_{0}\right), \left(A_{1},r_{1}\right),\dots ,\left(A_{n},r_{n}\right) \right]\right) = \frac{\left( \pi_{\mu} \right)_{*}\mu\left(\left[ A_{0}, A_{1},\dots ,A_{n} \right]\right)}{\left|A_{0}\right| \left|A_{1}\right| \cdots \left|A_{n}\right|}.\]
Then:\\ 
\textbf{(i):} $\varphi_\mu$ is a measure-theoretical isomorphism between the dynamical systems of $\left( W,\sigma_{\left(1,0\right)},\mu\right)$ and $\left(\left\{ \left(A,a\right)\,:\,A\subseteq\Lambda,\,a\in A\right\} ^{{\mathbb{N}_0}},\sigma,\tilde{\mu}\right)$, i.e. $\left(\varphi_\mu\right)_*\mu = \tilde{\mu}$.\\
\textbf{(ii):} The one-dimensional shift space $\left(2^{\Lambda}\right)^{{\mathbb{N}_0}}$ together with $\left( \pi_{\mu} \right)_{*}\mu$ is the the Pinsker factor of $\left(W,\sigma_{\left(1,0\right)},\mu\right)$.\\
\textbf{(iii):} With respect to $\left( \pi'_{\mu} \right)_{*}\mu$, the two-dimensional shift space $\left(2^{\Lambda}\right)^{{\mathbb{N}_0}^2}$ is measure-theoretically-generated by $\left(1,0\right)$.\\
\textbf{(iv):} $\left(\pi'_\mu\right)_*\mu \left( Z_W \right) = 1$ .
\end{theorem}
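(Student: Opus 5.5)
The plan is to follow the proof of Theorem \ref{zero_ent_analysis} step by step, with $\sigma_{(1,0)}$ in the role of $\sigma$ and $\sigma_{(0,1)}$ in the role of $\tau$. Since $\mu$ is atom-less, every almost-generation becomes a measure-theoretical generation, so off a null set $W$ can be identified with its row zero. The exceptional countable sets $C_W$ and their translates will be discarded throughout.

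For \textbf{(i)}, I will compute $(\varphi_\mu)_*\mu$ on cylinders by conditioning on the $\pi_\mu$-values. By Theorem \ref{RLP_theorem}, given $x_{1,0},x_{2,0},\dots$, the value $x_{0,0}$ is uniform on $\Lambda_{\mu,x}$. By Lemma \ref{zero_ent}, whose proof only uses the Pinsker and tail structure, the assumption $h_{(\pi_\mu)_*\mu}(\sigma)=0$ means that $\Lambda_{\mu,x}$ is $\sigma_{(1,0)}^{-k}\mathcal{B}$-measurable for every $k$. Hence changing $x_{n,0}$ inside $A_n$ does not change $A_0,\dots,A_{n-1}$. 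Chaining the uniform conditional laws then gives the factor $1/(|A_0|\cdots|A_n|)$, exactly as in the bi-permutative case.

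For \textbf{(ii)}, I will take a tail event $E$ for $\sigma_{(1,0)}$. Conditioned on the atom of $\pi_\mu^{-1}$ of the Borel $\sigma$-algebra, and under the identification made by $\varphi_\mu$, the measure is a product of uniform measures on $A_0\times A_1\times\cdots$. Kolmogorov's 0–1 law therefore shows that $E$ is $\pi_\mu$-measurable mod $\mu$. The reverse inclusion holds because $\pi_\mu$ is Pinsker-measurable by Theorem \ref{RLP_theorem}.

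For \textbf{(iii)}, the claim is that the row-zero values $\bigl(\Lambda_{\mu,\sigma_{(i,0)}x}\bigr)_i$ almost surely determine $\Lambda_{\mu,\sigma_{(0,1)}x}$. The point $x$ (its row zero) determines $\sigma_{(0,1)}x$ and hence this set. Moreover, $\Lambda_{\mu,\sigma_{(0,1)}x}$ is Pinsker-measurable for $\sigma_{(0,1)}$, hence for $\sigma_{(1,0)}$ by Lemma \ref{recip_lemma}, so it is measurable with respect to the Pinsker $\sigma$-algebra of $(W,\sigma_{(1,0)},\mu)$. By (ii) this $\sigma$-algebra is $\pi_\mu^{-1}$ of the Borel sets. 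Iterating with $\sigma_{(0,j)}$ gives measure-theoretic generation of $(\pi'_\mu)_*\mu$ by $(1,0)$.

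For \textbf{(iv)}, I will verify the defining properties of $Z_W$ for $z=\pi'_\mu(x)$ with $x$ typical.

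\emph{Nonemptiness and the realizability condition.} Every element of $z_{i,j+1}$ is realized as $x'_{i,j+1}$ by some $x'\in W$ off the exceptional set with $x'_{k,j}\in z_{k,j}$ for $k\ge i$. This holds because the uniform conditional measure has full support on $\Lambda_{\mu,\sigma_{(i,j+1)}x}$, and $\sigma_{(0,1)}$ carries the $\sigma_{(1,0)}$-fibre of row $j$ bijectively onto that of row $j+1$ by left permutativity, as in Remark \ref{parry_remark}. Using (i) and (iii), points of positive conditional mass correspond to choices within the $z_{k,j}$.

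\emph{Condition 1.} $|\Lambda_{\mu,x}|$ is $\sigma_{(0,1)}$-invariant by Theorem \ref{RLP_theorem}, so sizes are constant along columns.

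\emph{Condition 2, $\psi_2'((\psi_1')^{-1}(\{z\}))\subseteq W$.} Since $W$ is closed, it suffices to check finite patterns. By (i), any finite selection $a_{k,0}\in z_{k,0}$ on row zero has positive conditional probability, so it occurs in some point of $W$ with the same $\pi'_\mu$-image. Its continuation to the higher rows must then lie in $z$, and conversely every choice in the $z_{k,j}$ arises this way.

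I expect this second condition to be the main obstacle. The hard part is showing that arbitrary independent choices in all rows, not only row zero, still yield points of $W$. The approach I would try first is to use the fact that, by Lemma \ref{parry_lemma} and Remark \ref{parry_remark}, $\sigma_{(0,1)}$ bijects positive-mass fibres. From this, a counting argument shows that the number of row-$j$ configurations compatible with $z$ equals the product of the sizes. These configurations must then exhaust the product set, and closedness of $W$ finishes the argument.
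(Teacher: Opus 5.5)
\textbf{The gap is in (iv), condition 2.} You single out as the hard part the claim that arbitrary independent choices $a_{i,j}\in z_{i,j}$ in \emph{all} rows yield a point of $W$. That claim is false, so your counting step (``these configurations exhaust the product set, and closedness of $W$ finishes the argument'') cannot succeed.

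Take the space-time subshift $W=\{x\in(\mathbb{Z}/2\mathbb{Z})^{\mathbb{N}_0^2}: x_{i,j+1}=x_{i,j}+x_{i+1,j}\}$ of the Ledrappier automaton (bi-permutative, hence RLP), with its Haar measure $\mu$. Row zero is i.i.d.\ uniform, so $\Lambda_{\mu,x}=\mathbb{Z}/2\mathbb{Z}$ almost surely. Hence $\pi_\mu$ is constant, $h_{(\pi_\mu)_*\mu}(\sigma)=0$, and $z=\pi'_\mu(x)$ is the constant configuration $\mathbb{Z}/2\mathbb{Z}$. Yet the selection $a_{0,0}=a_{1,0}=0$, $a_{0,1}=1$ is not a pattern of $W$. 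Under the literal reading of condition 2 this example gives $(\pi'_\mu)_*\mu(Z_W)=0$. The paper's one-line proof (``similar to Theorem~\ref{zero_ent_analysis}'') does not address this.

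Structurally, a point of $W$ off $C_W$ is determined by its row zero, so the points of $W$ compatible with $z$ are parametrized by $\prod_k z_{k,0}$ alone. Your count shows only that each single row $j$ sweeps out $\prod_k z_{k,j}$; it says nothing about joint choices across rows. Closedness merely reduces the question to finite patterns, and those are genuinely absent.

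What is provable is the analogue of the bi-permutative picture, in which row $n$ is $(\tau')^n$ of row zero. Namely, every point of $W$ off the exceptional countable set whose row zero is a selection from $(z_{k,0})_k$ has all its entries in the corresponding $z_{i,j}$, and these points exhaust every row. The sentences just before your ``main obstacle'' already prove exactly this: positivity of finite row-zero cylinders from (i), identification of the $\pi'_\mu$-image via (iii), a limit over finite cylinders, and closedness of $W$. You should adopt this reading rather than try to establish the literal inclusion $\psi_2'((\psi_1')^{-1}(\{z\}))\subseteq W$.

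\textbf{A secondary misattribution in (ii) and (iii).} Theorem~\ref{RLP_theorem} makes only $x\mapsto\mu_x^{\sigma_{(1,0)}^{-1}\mathcal{B}}(\{x\})=1/|\Lambda_{\mu,x}|$ Pinsker-measurable and $\sigma_{(0,1)}$-invariant. It says nothing of that kind about the set $\Lambda_{\mu,x}$ itself; in Example~\ref{kitchens}, where $h_{(\pi_\mu)_*\mu}(\sigma)>0$, the set-valued map is not Pinsker-measurable. The inclusion $\pi_\mu^{-1}\mathcal{B}\subseteq\mathcal{P}_{\sigma_{(1,0)}}$ in (ii) comes instead from the hypothesis $h_{(\pi_\mu)_*\mu}(\sigma)=0$. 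In (iii), Pinsker-measurability of $x\mapsto\Lambda_{\mu,\sigma_{(0,1)}x}$ follows from that hypothesis together with $\sigma_{(0,1)}^{-1}\mathcal{P}_{\sigma_{(1,0)}}\subseteq\mathcal{P}_{\sigma_{(1,0)}}$, since the two shifts commute. Alternatively, use the tail argument from the proof of Theorem~\ref{zero_ent_analysis}(iii). With these repairs, your (i)--(iii) follow the route the paper intends.
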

 \begin{proof}
     The proof is similar to that of Theorem \ref{zero_ent_analysis}.
 \end{proof}
\begin{remark}
    In relation to (iv), we do not know whether the statement would remain true if $Z_W$ were to be defined  with the requirement that in each of its points all components were sets of equal size (not only the ones along the same column).
\end{remark}

This was the analysis theorem of $\mu$. For the proof of the synthesis theorem we shall need the following proposition.\\

\begin{proposition}
\label{ZW_continuity}
For a two-dimensional subshift $W\subseteq \Lambda^{{\mathbb{N}_0}^2}$ that is almost-generated by $\left(1,0\right)$, denote by $\left(Z_W\right)_0 \subseteq \left( 2^\Lambda\right)^{\mathbb{N}_0} $ the projection of the points of $Z_W$ onto their values on row zero. Then the function $\left(Z_W\right)_0 \rightarrow 2^\Lambda$ in which the values at the indices $\{\left(m,0\right)\}_{m\in {\mathbb{N}_0}}$ of a point in $Z_W$ determine the value in $\left(0,1\right)$, is continuous.
\end{proposition}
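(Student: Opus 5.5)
The plan mirrors the proof of Prop. \ref{generation_continuity}, with the roles of $W$ and $C_W$ played by $Z_W$ and the subshift structure inherited from $W$. First I establish that the function is well defined. By Remark \ref{not_compact_gen_rem}, the row-zero values of a point $z\in Z_W$ determine all values above them. So there is a map $F:(Z_W)_0\rightarrow 2^\Lambda$ sending $(z_{m,0})_m$ to $z_{0,1}$.

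The key step is an explicit description of $F$ in terms of $W$. Fix $z\in Z_W$. The defining property of the Borel subset containing $Z_W$, applied with $(i,j)=(0,0)$, gives the following: every $a\in z_{0,1}$ is realized as $x_{0,1}$ for some $x\in W\setminus C_W$ with $x_{k,0}\in z_{k,0}$ for all $k$. Conversely, take any $x\in W\setminus C_W$ with $x_{k,0}\in z_{k,0}$ for all $k$. Requirement 2 says that every choice of elements from the components of $z$ lies in $W$, and generation outside $C_W$ then forces $x_{0,1}\in z_{0,1}$. Hence
\[
F\left(\left(z_{m,0}\right)_m\right)=\left\{ f\left(y\right)\,:\,y\in\left(W\setminus C_W\right)_0,\ y_k\in z_{k,0}\ \forall k\right\},
\]
where $f$ is the continuous function of Prop. \ref{generation_continuity} (ii). The step I expect to need the most care is this converse inclusion. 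There I would use that the row-zero sequence of $x$ extends, via requirement 2, to a point of $(\psi_1')^{-1}(\{z\})$, and that this extension is unique because $x\notin C_W$.

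Continuity then follows from a compactness argument. Take $z^{(n)}\rightarrow z$ in $(Z_W)_0$. For large $n$, the first $N$ row-zero components of $z^{(n)}$ agree with those of $z$. If some $a\in F(z)$ failed to lie in $F(z^{(n)})$ along a subsequence, or some $b\notin F(z)$ lay in it, I would extract the corresponding witnesses $y^{(n)}\in(W\setminus C_W)_0$ and pass to a convergent subsequence in the compact space $W_0$. I would then use the continuity of $f$ on $(W\setminus C_W)_0$, together with the cylinder description after Prop. \ref{generation_continuity}, under which the value at $(0,1)$ is fixed by a finite cylinder on row zero, to reach a contradiction.

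The main obstacle is that the limit of the witnesses might fall into $C_W$. I would handle this using the fact that $C_W$ is countable: the uncountably many admissible choices in the fibre $(\psi_1')^{-1}(\{z\})$ allow the witnesses to be chosen to avoid it. Combining these steps, for each fixed $a$ membership in $F(\cdot)$ is locally constant, and so $F$ is continuous.
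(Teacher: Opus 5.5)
There is a genuine gap in the half of your argument showing that no new element can appear, i.e.\ that $b\notin F(z)$ implies $b\notin F(z^{(n)})$ for large $n$.

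Your compactness step fails exactly when the limit $y$ of the witnesses $y^{(n)}$ lies in $(C_W)_0$. At such a point $f$ is not defined, and continuity of $f$ on $(W\setminus C_W)_0$ says nothing. The limit of the corresponding points of $W$, which all carry $b$ at $(0,1)$, is a point of $C_W$. So neither the defining property of $Z_W$ nor your converse inclusion (which needs $x\notin C_W$) puts $b$ into $z_{0,1}$.

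Your proposed remedy does not touch this. Each $y^{(n)}$ is already outside $(C_W)_0$ by construction; the problem is their limit, and you have no control over it. The admissible witnesses for $b$ relative to $z^{(n)}$ may be confined to row-zero cylinders whose lengths grow with $n$, and a limit of points avoiding a countable set can perfectly well land in it. Your countability remark is relevant to the other half (finding, in the fibre of $z^{(n)}$, a point outside $C_W$ with a prescribed prefix), not to this one. When $W$ is actually generated by $(1,0)$, so that $C_W=\emptyset$, your compactness argument does go through; the difficulty is specific to almost-generation.

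The missing ingredient is requirement 1 in the definition of $Z_W$: sizes are constant along each column, so $|z_{0,1}|=|z_{0,0}|$. Your proposal never uses it. With it, the problematic half becomes trivial and the compactness argument can be dropped entirely. This is the paper's proof:
\begin{enumerate}
\item For each $a\in z_{0,1}$, take a witness and, by Prop.~\ref{generation_continuity}, a finite row-zero word $(a_0,\dots,a_{n_a})\in z_{0,0}\times\dots\times z_{n_a,0}$ that forces the value $a$ at $(0,1)$ outside $C_W$.
\item Set $n'=\max_{a} n_a$. Any $z'\in Z_W$ agreeing with $z$ on the first $n'+1$ row-zero components satisfies $z_{0,1}\subseteq z'_{0,1}$. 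This is your ``$a\in F(z)$'' direction, obtained from the cylinder description with no compactness.
\item Then $|z'_{0,1}|=|z'_{0,0}|=|z_{0,0}|=|z_{0,1}|$ forces $z'_{0,1}=z_{0,1}$.
\end{enumerate}
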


\begin{proof}
Let $z$ be a point in $Z_W$. By Prop. \ref{generation_continuity}, for every $a\in z_{0,1}$ there exists a finite word $\left(a_0,a_1,\dots,a_n\right)\in z_{0,0}\times z_{1,0}\times \dots \times z_{n,0}$ that when placed at the indices $\left(0,0\right),\left(1,0\right),\dots,\left(n,0\right)$ determines in $W$ the value $a$ at index $\left(0,1\right)$. For $n'$ which is the maximal such $n$ when varying $a$ in $z_{0,1}$, we then have that in $Z_W$ the word $\left(z_{0,0},z_{1,0},\dots,z_{n',0}\right)$ at indices $\left(0,0\right),\left(1,0\right),\dots,\left(n',0\right)$ determines the value $z_{0,1}$ at the index $\left(0,1\right)$ (because of the constant size along columns requirement in the definition of $Z_W$).\\
\end{proof}

\begin{theorem}
\label{RLP_zero_ent_synth}

Let $W\subseteq \Lambda^{{\mathbb{N}_0}^2}$ be a two-dimensional subshift that is almost-generated by $\left(1,0\right)$ in a left permutative manner, and let $\nu$ be a Borel probability measure on $Z_W$ which is invariant relative to the two-dimensional shift. Form a measure $\tilde{\mu}$ on $\left(\psi_1'\right)^{-1}\left(Z_W\right) \subseteq \left\{ \left(A,a\right)\,:\,A\subseteq\Lambda,\,a\in A\right\} ^{{\mathbb{N}_0}^2}$ by requiring that for a cylinder depending on the first values on row zero 
\[\tilde{\mu}\left(\left[  \left(A_{0,0},a_{0,0}\right), \left(A_{1,0},a_{1,0}\right),\dots ,\left(A_{n,0},a_{n,0}\right) \right] \cap \left(\psi_1'\right)^{-1}\left(Z_W\right) \right)\]
\[
= \frac{\nu\left(\left[ A_{0,0}, A_{1,0},\dots ,A_{n,0} \right]\right)}{\left|A_{0,0}\right| \left|A_{1,0}\right| \cdots \left|A_{n,0}\right|},
\]
(cf. Remark  \ref{not_compact_gen_rem}) and define $\mu = \left(\psi_2'\right)_*\tilde{\mu}$. Then:\\
\textbf{(i):} $\mu$ is invariant relative to the two-dimensional shift.\\
\textbf{(ii):} If $W$ is an RLP-subshift and $h_\nu\left( \sigma_{\left(1,0\right)} \right) = 0$ then $\left(\pi'_{\mu}\right)_*\mu = \nu$.\\
\textbf{(iii):} If in addition to the premises of (ii),  $\nu$ is also ergodic relative to the two-dimensional shift then so is $\mu$.\\
\end{theorem}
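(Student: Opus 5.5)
The plan is to follow the proof of Theorem \ref{zero_ent_synth}, with Remark \ref{not_compact_gen_rem} and Prop. \ref{ZW_continuity} taking over the role played there by the explicit automaton $\tau'$.

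\textbf{Step 1: $\tilde{\mu}$ is well defined.} First check that the row-zero cylinder formula defines a consistent premeasure on $\left(\psi_1'\right)^{-1}\left(Z_W\right)$. By Remark \ref{not_compact_gen_rem}, the row-zero values of a point of $Z_W$ determine the whole point. Likewise, apart from a countable set, the row-zero values of a point of $\left(\psi_1'\right)^{-1}\left(Z_W\right)$ determine the whole point. Row-zero cylinders therefore generate the Borel $\sigma$-algebra modulo null sets. Consistency under extending a cylinder by one coordinate is the usual computation: summing over $a_{n+1}\in A_{n+1,0}$ cancels the factor $\left|A_{n+1,0}\right|$. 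Kolmogorov extension then applies. Atoms coming from the countable exceptional set are discarded, or we note that $\nu$-almost-surely the conditional measure on the fiber is atomless when the sets have size $>1$.

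\textbf{Step 2, part (i).} Invariance under $\sigma_{(1,0)}$ is immediate from the $\sigma_{(1,0)}$-invariance of $\nu$. Invariance under $\sigma_{(0,1)}$ is the core. Take a row-zero cylinder $\left[(A_{0},a_{0}),\dots,(A_{n},a_{n})\right]$ and compute the $\tilde{\mu}$-measure of its preimage under $\sigma_{(0,1)}$, which is an event about row one.
\begin{itemize}
\item By Prop. \ref{ZW_continuity} and Prop. \ref{generation_continuity}, the preimage is, up to null sets, a countable disjoint union of row-zero cylinders $\left[(B_0,b_0),\dots,(B_{N},b_{N})\right]$ with $N\ge n$. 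These are cylinders whose $B$-word forces $A_0,\dots,A_n$ on row one and whose $b$-word forces $a_0,\dots,a_n$.
\item Group by the $B$-word. By the constant-column-size requirement in the definition of $Z_W$, $\left|B_i\right|=\left|A_i\right|$ for $i\le n$.
\item By left permutativity and requirement 2 of $Z_W$ (every choice in the fibre lies in $W$), the number of $b$-words in $B_0\times\dots\times B_N$ producing $(a_0,\dots,a_n)$ is exactly $\left|B_{n+1}\right|\cdots\left|B_N\right|$. The argument runs inductively from the right: once $b_1,\dots$ are chosen, changing $b_0$ changes the value above it, so exactly one choice of $b_0\in B_0$ fits. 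This uses the first defining property of $Z_W$, that every $a\in z_{i,j+1}$ is realised.
\item Summing gives $\sum \nu\left([B_0,\dots,B_N]\right)/\left(\left|A_0\right|\cdots\left|A_n\right|\right) = \nu\left(\sigma_{(0,1)}^{-1}[A_0,\dots,A_n]\right)/\prod\left|A_i\right|$. By $\sigma_{(0,1)}$-invariance of $\nu$ this equals $\tilde{\mu}$ of the original cylinder.
\end{itemize}
Pushing forward by $\psi_2'$ then gives (i). I expect the main obstacle to be this counting step, since the generation has unbounded depth and only holds off a countable set. I would handle it by working on the clopen pieces furnished by Prop. \ref{ZW_continuity} and discarding the countable exceptional set, which is null because the relevant measures are atomless on fibres.

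\textbf{Step 3, part (ii).} Copy Theorem \ref{zero_ent_synth}(ii) for $\sigma_{(1,0)}$.
\begin{itemize}
\item Since $\mu$ is invariant, Theorem \ref{RLP_theorem} applies. For $\tilde{\mu}$-a.e. $y$, the set $\left(\psi_1'(y)\right)_{0,0}$ is contained in $\Lambda_{\mu,\psi_2'(y)}$, because all choices in the fibre are charged.
\item $h_\nu(\sigma_{(1,0)})=0$ gives $h_{\tilde{\mu}}(\sigma_{(1,0)})=\mathbb{E}_{\tilde{\mu}}\log\left|\left(\psi_1'(y)\right)_{0,0}\right|$, while $h_\mu(\sigma_{(1,0)})=\mathbb{E}\log\left|\Lambda_{\mu,x}\right|$.
\item The factor inequality $h_\mu\le h_{\tilde{\mu}}$ then forces equality of the sets almost surely, so $\left(\pi_\mu\right)_*\mu$ agrees with the row-zero marginal of $\nu$.
\item Row zero generates measure-theoretically on both sides, by Theorem \ref{RLP_zero_ent_analysis}(iii) and Remark \ref{not_compact_gen_rem}. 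Hence $\left(\pi'_\mu\right)_*\mu=\nu$.
\end{itemize}

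\textbf{Step 4, part (iii).} Take the ergodic decomposition of $\mu$ under the two-dimensional shift. Almost every component $\mu'$ satisfies the hypotheses of Theorem \ref{RLP_zero_ent_analysis}: its factor entropy is zero because $\left(\pi'_{\mu'}\right)_*\mu'$ is a component-like piece of $\nu$, which is ergodic. It is therefore recovered from $\left(\pi'_{\mu'}\right)_*\mu'$ through Theorem \ref{RLP_zero_ent_analysis}(i). Ergodicity of $\nu$ forces these images to equal $\nu$, so every component equals $\mu$, and $\mu$ is ergodic.
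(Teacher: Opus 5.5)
The gap is in the counting step of part (i). You import the exact count from Theorem~\ref{zero_ent_synth}(i), but there it holds because $\tau$ has radius one, so every word $(b_0,\dots,b_{n+1})$ determines the values above it. Here Prop.~\ref{generation_continuity} gives only pointwise continuity on the non-compact set $(W\setminus C_W)_0$, and the determination depth varies from point to point. A word $(b_0,\dots,b_N)$ of fixed length generally does not determine the row-one values. So ``the number of $b$-words in $B_0\times\cdots\times B_N$ producing $(a_0,\dots,a_n)$'' is not well defined. Likewise, ``changing $b_0$ changes the value above it'' is a statement about infinite rows, not about finite words.

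The same problem shows up in your countable disjoint decomposition of the preimage. The cylinders lying over a fixed $B$-word of length $N+1$ are not all the $b$-words of that length: some need a longer word, and others already appear under a shorter $B$-word. So the sum cannot be regrouped as $\sum_B \left|B_{n+1}\right|\cdots\left|B_N\right|\,\nu([B_0,\dots,B_N])/\prod_i\left|B_i\right|$. Working on the clopen pieces of Prop.~\ref{ZW_continuity} does not repair this. That proposition only controls the determination of the set $z_{0,1}$, not of the individual values above each $b$-word.

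The paper handles exactly this with a limiting argument.
\begin{itemize}
\item Both sides are written as $\lim_{N\to\infty}$ of sums over words of length $N+1$ (the sets $J_N$ and $I_N$).
\item For large $N$ and $\nu$-most $B$-words, all but an $\varepsilon$-proportion of the tails $(b_{n+1},\dots,b_N)$ are good. For a good tail, every head in $B_0\times\cdots\times B_n$ determines the row-one values at indices $0,\dots,n$ outside $C_W$, and by left permutativity exactly one head yields $(a_0,\dots,a_n)$.
\item The bad tails contribute $O(\varepsilon)$, and $\varepsilon$ is arbitrary.
\end{itemize}
This needs the uniform product measure on each fibre to be atomless. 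The paper secures this by reducing to ergodic $\nu$ with infinitely many non-singletons on row zero. Your remark ``atomless when the sets have size $>1$'' needs the same justification, for instance Poincar\'e recurrence together with the constant column sizes.

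Alternatively, your right-to-left injectivity argument becomes exact if you condition on the \emph{infinite} tail $(b_{n+1},b_{n+2},\dots)$ under the fibre measure, rather than on a finite word. For almost every tail, the map from heads in $\prod_{i\le n}z_{i,0}$ to row-one values in $\prod_{i\le n}z_{i,1}$ is injective between sets of equal size, hence bijective. Fubini then gives $\tilde{\mu}\bigl(\sigma_{(0,1)}^{-1}[(A_0,a_0),\dots,(A_n,a_n)]\bigr)=\nu\bigl(\sigma_{(0,1)}^{-1}[A_0,\dots,A_n]\bigr)/\prod_i\left|A_i\right|$, as required. Your Steps 3 and 4 follow the same route the paper indicates for (ii) and (iii).
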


\begin{proof}
Only the generalized proof of (i) deserves to be written down.\\

\textbf{(i):} It suffices to prove the invariance of $\tilde{\mu}$ relative to the two-dimensional shift. As its invariance relative to $\sigma_{\left(1,0\right)}$ is immediate, it is left to prove its invariance relative to $\sigma_{\left(0,1\right)}$.\\

We fix some cylinder $\left[  \left(A_{0,0},a_{0,0}\right), \left(A_{1,0},a_{1,0}\right),\dots ,\left(A_{d,0},a_{d,0}\right) \right]$ of the first $d+1$ values on row zero, and aim to prove that its inverse image through $\sigma_{\left(0,1\right)}|_{\left(\psi_1'\right)^{-1}\left(Z_W\right) }$ is of equal $\tilde{\mu}$-measure.\\

The $\tilde{\mu}$-probability of  $\left[  \left(A_{0,0},a_{0,0}\right), \left(A_{1,0},a_{1,0}\right),\dots ,\left(A_{d,0},a_{d,0}\right) \right]$ is equal to 
\[  \frac{\nu\left(\left[A_{0,0},\dots,A_{d,0}\right]\right)}{\left|A_{0,0}\right|\cdots\left|A_{d,0}\right|}. \] \\ 

By the invariance of $\nu$  relative to $\sigma_{\left(0,1\right)}$ this is equal to 
\[
\frac{\nu\left(\sigma_{\left(0,1\right)}|_{Z_W}^{-1}\left(\left[A_{0,0},\dots,A_{d,0}\right]\right)\right)}{\left|A_{0,0}\right|\cdots\left|A_{d,0}\right|}.
\]

However, by Prop. \ref{ZW_continuity},  denoting 
\[
J_N = \{\left(B_{0,0},\dots,B_{N,0}\right)  :\,\forall  0\leq i \leq d  \,\, \left[B_{i,0},\dots,B_{N,0}\right] \cap Z_W  \,\subseteq \, \sigma_{\left(0,1\right)}|_{Z_W}^{-1}\left(\left[A_{i,0},\dots,A_{d,0}\right]\right) \},
\]
this expression is equal to the following expression:\\

\[
\lim_{N\rightarrow \infty}\sum_{\left(B_{0,0},\dots,B_{N,0}\right) \in J_N } \frac{\nu \left( \left[B_{0,0},\dots,B_{N,0}\right] \right)}{\left|A_{0,0}\right|\cdots\left|A_{d,0}\right|} \] \\

\[
= \lim_{N\rightarrow \infty} \sum_{ \left(B_{0,0},\dots,B_{N,0}\right) \in J_N} \frac{\nu \left( \left[B_{0,0},\dots,B_{N,0}\right] \right)}{\left|B_{0,0}\right|\cdots\left|B_{d,0}\right|}  \] \\

\[ 
=  \lim_{N\rightarrow \infty} \sum_{ \left(B_{0,0},\dots,B_{n,0}\right) \in J_N} \left|B_{d+1,0}\right|\cdots \left|B_{N,0}\right| \frac{ \nu \left( \left[B_{0,0},\dots,B_{N,0} \right] \right)}{\left|B_{0,0}\right|\cdots\left|B_{N,0} \right|}.
\]\\

On the other hand, denoting

\[
I_N = \{\left(\left(B_{0,0},b_{0,0}\right),\dots,\left(B_{N,0},b_{N,0}\right)\right)\  :\, \left[\left(B_{0,0},b_{0,0}\right),\dots,\left(B_{N,0},b_{N,0}\right)\right] \cap {\left(\psi_1'\right)^{-1}\left(Z_W\right) } \cap \left(  \psi_2' \right)^{-1} \left(  W\setminus C_W \right)   \]

\[
 \subseteq \sigma_{\left(0,1\right)}|_{\left(\psi_1'\right)^{-1}\left(Z_W\right) }^{-1}\left(\left[  \left(A_{0,0},a_{0,0}\right), \left(A_{1,0},a_{1,0}\right),\dots ,\left(A_{d,0},a_{d,0}\right) \right]\right)\},
\]

Props. \ref{generation_continuity} and \ref{ZW_continuity} imply that the $\tilde{\mu}$-probability of 
\[
\sigma_{\left(0,1\right)}|_{\left(\psi_1'\right)^{-1}\left(Z_W\right) }^{-1}\left(    \left[  \left(A_{0,0},a_{0,0}\right), \left(A_{1,0},a_{1,0}\right),\dots ,\left(A_{d,0},a_{d,0}\right) \right]  \right)
\]
 is equal to

\[ \lim_{N\rightarrow \infty}
\sum_{\left(\left(B_{0,0},b_{0,0}\right),\dots,\left(B_{N,0},b_{N,0}\right)\right) \in I_N} \frac{ \nu \left( \left[B_{0,0},\dots,B_{N,0} \right] \right)}{\left|B_{0,0}\right|\cdots\left|B_{N,0} \right| }.
\]\\\\

Let $\varepsilon>0$.\\

We may assume without loss of generality that $\nu$ is ergodic relative to the two-dimensional shift. By that, we may further suppose without loss of generality that $\nu$-almost surely row zero has infinite many components which are not singletons (in fact, a positive density of such), for otherwise the proof is trivial. Given such a $\left(B_{0,0},B_{1,0},\dots\right) \in Z_W$ , we shall refer to the probability on the  atom $\left[B_{0,0},B_{1,0},\dots\right]\in\left(\psi_1'\right)^{-1}{\mathcal{B}_{Z_W}}$ that is the probability of independent uniformly random choices on $B_{0,0},B_{1,0},\dots$ (the values of a point on row zero almost-surely determine the rest of its values). Then by Prop. \ref{generation_continuity}, we know that a point $\left(b_{0,0},b_{1,0},\dots\right) \in \left[B_{0,0},B_{1,0},\dots\right]$  will almost-surely satisfy for a large enough $N$ and for every $0\leq i\leq d$ that$\left[*,\dots,*,b_{i,0},b_{i+1,0},\dots b_{N,0}\right]$ determines the values at indices $\left(i,1\right),\left(i+1,1\right),\dots \left(d,1\right)$ for points inside $\left( W\setminus \sigma_{\left(1,0\right)}^{-i}\left(C_W\right)\right)$, and similarly if we change the values at the first $d+1$ places in $\left[b_{0,0},b_{1,0},\dots b_{N,0}\right]$ to any other option in $B_{0,0}\times B_{1,0}\times\dots \times B_{d,0}$  (notice that by left permutativity exactly one of those $\left|B_{0,0}\right|\times \left|B_{1,0}\right|\times\dots \times \left|B_{d,0}\right|$ length $N+1$ cylinders  will determine the values $\left(a_{0,0},\dots,a_{d,0}\right)$ above it). As $N\rightarrow \infty$ the  probability that $N$ will indeed be large enough approaches unity.\\

 Therefore, for a large enough $N$ more than $\left(1-\varepsilon\right) \left|B_{d+1,0}\right|\times \left|B_{d+2,0}\right|\times\dots \times \left|B_{N,0}\right|$ of the elements $\left(b_{d+1,0},b_{d+2,0},\dots b_{N,0}\right)\in B_{d+1,0}\times B_{d+2,0}\times\dots \times B_{N,0}$ satisfy that every $\left(b'_{0,0},b_{1,0},\dots,b'_{d,0},b_{d+1,0},b_{d+2,0},\dots b_{N,0}\right)\in B_{0,0}\times \times\dots \times B_{N,0}$ forms a cylinder that determines the values at indices $\left(0,1\right),\left(1,1\right),\dots \left(d,1\right)$ inside $W\setminus C_W$, and exactly one of these $\left|B_{0,0}\right|\times \left|B_{1,0}\right|\times\dots \times \left|B_{d,0}\right|$  length $N+1$ cylinders determines the values  $\left(a_{0,0},\dots,a_{d,0}\right)$ above it.\\\\

We have just performed an analysis for a fixed $\left(B_{0,0},B_{1,0},\dots\right) \in Z_W$. However, as $N\rightarrow \infty$ the $\nu$-probability that $N$ will indeed be large enough for the preceding paragraph to hold for an element of $Z_W$  approaches unity.\\\\

Given $\left(B_{0,0},\dots,B_{N,0}\right)$  that satisfies $\left[B_{0,0},\dots,B_{N,0}\right] \cap Z_W  \,\subseteq \, \sigma_{\left(0,1\right)}|_{Z_W}^{-1}\left(\left[A_{0,0},\dots,A_{d,0}\right]\right)$, we denote
\[
r_{\left(B_{0,0},\dots,B_{N,0}\right),N} := \left|  \{ \left(b_{0,0},\dots,b_{N,0}\right)  : \, \left(\left(B_{0,0},b_{0,0}\right),\dots,\left(B_{N,0},b_{N,0}\right)\right) \in I_N \} \right|.
\]

Thus 

\[
\lim_{N\rightarrow \infty}
\sum_{\left(\left(B_{0,0},b_{0,0}\right),\dots,\left(B_{N,0},b_{N,0}\right)\right) \in I_N} \frac{ \nu \left( \left[B_{0,0},\dots,B_{N,0} \right] \right)}{\left|B_{0,0}\right|\cdots\left|B_{N,0} \right| }
\]
\[
= 
\lim_{N\rightarrow\infty}
\sum_{  \{\left(B_{0,0},\dots,B_{N,0}\right)\  :\,  \left[B_{0,0},\dots,B_{N,0}\right] \cap Z_W  \,\subseteq \, \sigma_{\left(0,1\right)}|_{Z_W}^{-1}\left(\left[A_{0,0},\dots,A_{d,0}\right]\right) \}} r_{\left(B_{0,0},\dots,B_{N,0}\right),N}  \frac{ \nu \left( \left[B_{0,0},\dots,B_{N,0} \right] \right)}{\left|B_{0,0}\right|\cdots\left|B_{N,0} \right|}
\]
\[
\leq 
\lim_{N\rightarrow\infty} \left(\right.
\sum_{  \{\left(B_{0,0},\dots,B_{N,0}\right)\  :\,  \left[B_{0,0},\dots,B_{N,0}\right] \cap Z_W  \,\subseteq \, \sigma_{\left(0,1\right)}|_{Z_W}^{-1}\left(\left[A_{0,0},\dots,A_{d,0}\right]\right) \}}  \left|B_{d+1,0}\right|\cdots \left|B_{N,0}\right|   \frac{   \nu \left( \left[B_{0,0},\dots,B_{N,0} \right] \right)}{\left|B_{0,0}\right|\cdots\left|B_{N,0} \right|}
\]

\[
+
\sum_{  \{\left(B_{0,0},\dots,B_{N,0}\right)\  :\,  \left[B_{0,0},\dots,B_{N,0}\right] \cap Z_W  \,\subseteq \, \sigma_{\left(0,1\right)}|_{Z_W}^{-1}\left(\left[A_{0,0},\dots,A_{d,0}\right]\right) \}} \varepsilon \left|B_{0,0}\right|\cdots \left|B_{d,0}\right|  \frac{   \nu \left( \left[B_{0,0},\dots,B_{N,0} \right] \right)}{\left|B_{0,0}\right|\cdots\left|B_{N,0} \right|}\left.\right)
\]

\[
\leq 
\lim_{N\rightarrow\infty} \left(\right.
\sum_{  \{\left(B_{0,0},\dots,B_{N,0}\right)\  :\,  \left[B_{0,0},\dots,B_{N,0}\right] \cap Z_W  \,\subseteq \, \sigma_{\left(0,1\right)}|_{Z_W}^{-1}\left(\left[A_{0,0},\dots,A_{d,0}\right]\right) \}}  \left|B_{d+1,0}\right|\cdots \left|B_{N,0}\right|   \frac{   \nu \left( \left[B_{0,0},\dots,B_{N,0} \right] \right)}{\left|B_{0,0}\right|\cdots\left|B_{N,0} \right|}
\]

\[
+
\varepsilon \left|\Lambda \right|^{d+1}  \sum_{  \{\left(B_{0,0},\dots,B_{N,0}\right)\  :\,  \left[B_{0,0},\dots,B_{N,0}\right] \cap Z_W  \,\subseteq \, \sigma_{\left(0,1\right)}|_{Z_W}^{-1}\left(\left[A_{0,0},\dots,A_{d,0}\right]\right) \}}   \frac{   \nu \left( \left[B_{0,0},\dots,B_{N,0} \right] \right)}{\left|B_{0,0}\right|\cdots\left|B_{N,0} \right|}\left.\right).
\]

Since $\varepsilon>0$ was arbitrary we obtain

\[
\lim_{N\rightarrow \infty}
\sum_{\left(\left(B_{0,0},b_{0,0}\right),\dots,\left(B_{N,0},b_{N,0}\right)\right) \in I_N} \frac{ \nu \left( \left[B_{0,0},\dots,B_{N,0} \right] \right)}{\left|B_{0,0}\right|\cdots\left|B_{N,0} \right| }
\]

\[
\leq
\lim_{N\rightarrow\infty} 
\sum_{  \{\left(B_{0,0},\dots,B_{N,0}\right)\  :\,  \left[B_{0,0},\dots,B_{N,0}\right] \cap Z_W  \,\subseteq \, \sigma_{\left(0,1\right)}|_{Z_W}^{-1}\left(\left[A_{0,0},\dots,A_{d,0}\right]\right) \}}  \left|B_{d+1,0}\right|\cdots \left|B_{N,0}\right|   \frac{   \nu \left( \left[B_{0,0},\dots,B_{N,0} \right] \right)}{\left|B_{0,0}\right|\cdots\left|B_{N,0} \right|},
\]
and one can show similarly that

\[
\lim_{N\rightarrow \infty}
\sum_{\left(\left(B_{0,0},b_{0,0}\right),\dots,\left(B_{N,0},b_{N,0}\right)\right) \in I_N} \frac{ \nu \left( \left[B_{0,0},\dots,B_{N,0} \right] \right)}{\left|B_{0,0}\right|\cdots\left|B_{N,0} \right| }
\]

\[
\geq
\lim_{N\rightarrow\infty} 
\sum_{  \{\left(B_{0,0},\dots,B_{N,0}\right)\  :\,  \left[B_{0,0},\dots,B_{N,0}\right] \cap Z_W  \,\subseteq \, \sigma_{\left(0,1\right)}|_{Z_W}^{-1}\left(\left[A_{0,0},\dots,A_{d,0}\right]\right) \}}  \left|B_{d+1,0}\right|\cdots \left|B_{N,0}\right|   \frac{   \nu \left( \left[B_{0,0},\dots,B_{N,0} \right] \right)}{\left|B_{0,0}\right|\cdots\left|B_{N,0} \right|}.
\]
So we deduce that

\[
\lim_{N\rightarrow \infty}
\sum_{\left(\left(B_{0,0},b_{0,0}\right),\dots,\left(B_{N,0},b_{N,0}\right)\right) \in I_N} \frac{ \nu \left( \left[B_{0,0},\dots,B_{N,0} \right] \right)}{\left|B_{0,0}\right|\cdots\left|B_{N,0} \right| }
\]

\[
=
\lim_{N\rightarrow\infty} 
\sum_{ \left(B_{0,0},\dots,B_{N,0}\right) \in J_N}  \left|B_{d+1,0}\right|\cdots \left|B_{N,0}\right|   \frac{   \nu \left( \left[B_{0,0},\dots,B_{N,0} \right] \right)}{\left|B_{0,0}\right|\cdots\left|B_{N,0} \right|}.
\]\\

As we have seen in the early stage of the proof, the last expression is equal to

\[
\frac{\nu\left(\sigma_{\left(0,1\right)}|_{Z_W}^{-1}\left(\left[A_{0,0},\dots,A_{d,0}\right]\right)\right)}{\left|A_{0,0}\right|\cdots\left|A_{d,0}\right|}.
\]
\end{proof}

Again, we sum up the two theorems in a corollary.\\

\begin{corollary}
\label{RLP_1-1_cor}
Let $W$ be an  RLP subshift . Then there exists a bijective correspondence between the collection of Borel probability measures $\mu$ on $W$ that are invariant and ergodic relative to the two-dimensional shift and satisfy $h_{\left(\pi_{\mu}\right)_*\mu} \left(\sigma\right) = 0$, and the collection of Borel probability measures $\nu$ on $Z_W$ that are invariant and ergodic relative to the two-dimensional shift and satisfy $h_{\nu} \left(\sigma_{\left(1,0\right)}\right) = 0$.

\end{corollary}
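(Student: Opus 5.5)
The correspondence will be the pair of maps $\mu\mapsto(\pi'_\mu)_*\mu$ and $\nu\mapsto(\psi_2')_*\tilde\mu$, where $\tilde\mu$ is built from $\nu$ as in Theorem \ref{RLP_zero_ent_synth}. This mirrors how Corollary \ref{1-1_cor} follows from Theorems \ref{zero_ent_analysis} and \ref{zero_ent_synth}. Since $\mu$ is ergodic for the two-dimensional shift, either $\mu$ is atom-less or it is a finite periodic orbit. In the periodic case the entropy statements are trivial and $\pi'_\mu$ is defined directly. So I concentrate on the atom-less case, where Theorems \ref{RLP_theorem} and \ref{RLP_zero_ent_analysis} apply.

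\textbf{Forward map.} Let $\mu$ be ergodic, invariant, with $h_{(\pi_\mu)_*\mu}(\sigma)=0$, and set $\nu=(\pi'_\mu)_*\mu$.
\begin{enumerate}
\item By Theorem \ref{RLP_zero_ent_analysis}(iv), $\nu$ is supported on $Z_W$.
\item Since $\pi'_\mu$ is equivariant for the two-dimensional shift, $\nu$ is invariant and ergodic, as a factor of an ergodic measure.
\item By Theorem \ref{RLP_zero_ent_analysis}(iii), $(1,0)$ measure-theoretically generates $\nu$. Hence the row-zero projection is an isomorphism of $(Z_W,\sigma_{(1,0)},\nu)$ onto $((2^\Lambda)^{\mathbb{N}_0},\sigma,(\pi_\mu)_*\mu)$, and so $h_\nu(\sigma_{(1,0)})=h_{(\pi_\mu)_*\mu}(\sigma)=0$.
\end{enumerate}

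\textbf{Backward map.} Let $\nu$ be ergodic on $Z_W$ with $h_\nu(\sigma_{(1,0)})=0$, and define $\mu=(\psi_2')_*\tilde\mu$.
\begin{enumerate}
\item Theorem \ref{RLP_zero_ent_synth}(i) and (iii) give that $\mu$ is invariant and ergodic.
\item Theorem \ref{RLP_zero_ent_synth}(ii) gives $(\pi'_\mu)_*\mu=\nu$.
\item Restricting to row zero, $(\pi_\mu)_*\mu$ is the row-zero marginal of $\nu$. By generation this marginal is isomorphic to $(Z_W,\sigma_{(1,0)},\nu)$, so its entropy is zero.
\end{enumerate}
Thus $\mu$ lies in the first collection, and the backward-then-forward composition is the identity.

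\textbf{Forward-then-backward composition.} It remains to show that starting from $\mu$ and rebuilding from $\nu=(\pi'_\mu)_*\mu$ returns $\mu$. By Theorem \ref{RLP_zero_ent_analysis}(i), the row-zero marginal of $\mu$ equals $(\psi_2)_*$ of the one-dimensional measure determined by $(\pi_\mu)_*\mu$ through the uniform-fiber formula. That formula coincides with the row-zero formula defining $\tilde\mu$ in Theorem \ref{RLP_zero_ent_synth}. Both measures are generated by row zero: $\mu$ because it is atom-less and $W$ is almost-generated by $(1,0)$, and the reconstruction by Remark \ref{not_compact_gen_rem}. Their row-zero marginals agree, so the two measures agree.

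\textbf{Main obstacle.} I expect the delicate point to be this identification of two-dimensional measures from their row-zero data. One must check that the countable exceptional sets $C_W$ and their shifts are null for both measures; this uses atom-lessness together with the ``apart from a countable subset'' clause of Remark \ref{not_compact_gen_rem}. The atomic case must also be handled separately rather than through Theorem \ref{RLP_zero_ent_analysis}. I would try first to show that an ergodic $\nu$ with only singleton components and an atomic row-zero marginal corresponds exactly to the periodic orbits of $W$.
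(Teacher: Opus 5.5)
Your proposal is correct and follows the paper's argument. The paper proves this corollary only by saying it sums up Theorems \ref{RLP_zero_ent_analysis} and \ref{RLP_zero_ent_synth}, with the correspondence $\mu\mapsto(\pi'_\mu)_*\mu$ and inverse $\nu\mapsto(\psi_2')_*\tilde\mu$, exactly as Corollary \ref{1-1_cor} combines Theorems \ref{zero_ent_analysis} and \ref{zero_ent_synth}. The paper never treats atomic measures (for which $\pi_\mu$ is not even defined via Theorem \ref{RLP_theorem}), so your care with the null sets $C_W$ and the periodic-orbit case goes beyond what the paper writes rather than filling a gap in its proof.
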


Einstein Institute of Mathematics, Edmond J. Safra campus, The Hebrew
University of Jerusalem, Israel.\\

matan.tal@mail.huji.ac.il


\begin{thebibliography}{1}

\bibitem{key-7}M. Einsiedler, T. Ward. Entropy Geometry and Disjointness for Zero-Dimensional Algebraic Actions. Journal für die Reine und Angewandte Mathematik, 584, 195-214  (2005).

\bibitem{key-4}M. Einsiedler. Invariant Subsets and Invariant Measures
for Irreducible Actions on Zero-Dimensional Groups. Bulletin of the
London Mathematical Society, 36(3), 321-331 (2004).

\bibitem{key-5}M. Einsiedler. Isomorphism and Measure Rigidity for
Algebraic Actions on Zero-Dimensional Groups. Monatshefte für Mathematik,
144(1), 39-69 (2005).

\bibitem{key-8}B. Host, A. Maass, S. Martínez. Uniform Bernoulli Measure in Dynamics of Permutative Cellular Automata with Algebraic Local Rules. Discrete and Continuous Dynamical Systems, 9(6), 1423-1446
(2003).


\bibitem{key-10} T. Meyerovitch, R. Pavlov. On Independence and Entropy for High-Dimensional Isotropic Subshifts. Proceedings of the London Mathematical Society, 109(4), 921-945 (2014).


\bibitem{key-2}W. Parry. \textit{Squaring and Cubing the Circle -
Rudolph's Theorem}. Ergodic Theory of $\mathbb{Z}^{d}$ Actions, Cambridge
Univ. Press, 177--183 (1996).

\bibitem{key-1}M. Pivato. Invariant Measures for Bipermutative Cellular Automata, Discrete and Continuous Dynamical Systems, Volume 12, Number 4, 723-736 (2005).

\bibitem{key-9} D.J. Rudolph. × 2 and × 3 Invariant Measures and Entropy. Ergodic Theory and Dynamical Systems, 10(2), 395-406 (1990).


\bibitem{key-6}M. Tal. Furstenberg's Times 2, Times 3 Conjecture
(a Short Survey). arXiv preprint arXiv:2110.05989 (2021).\\

\end{thebibliography}
\end{document}